   \theoremstyle{plain}
   \newtheorem{theorem}{Theorem}
   \newtheorem{proposition}[theorem]{Proposition}
   \newtheorem{lemma}[theorem]{Lemma}
   \newtheorem{claim}[theorem]{Claim}
   \theoremstyle{definition}
   \newtheorem{definition}[theorem]{Definition}
   \theoremstyle{remark}
 \newcommand{\nc}{\newcommand}
 \nc{\la}{\lambda}
 \nc{\etil}{\tilde{e}}
 \nc{\ftil}{\tilde{f}}
 \nc{\bftil}{\boldsymbol{\ftil}}
 \nc{\CrB}{\mathsf{B}}
 \nc{\Binf}{\CrB(\infty)}
 \nc{\Bla}{\CrB(\lambda)}
 \nc{\Binfv}{b_\infty}
 \nc{\CrT}{\mathsf{T}}
 \nc{\Tinf}{\CrT(\infty)}
 \nc{\Tla}{\CrT(\lambda)}
 \nc{\fwdtbl}{T}
 \nc{\Tinfv}{\fwdtbl_\infty}
 \nc{\fwdtblm}{\fwdtbl_{\fwdfxv}}
 \nc{\CrRT}{\bar{\mathsf{T}}}
 \nc{\Rinf}{\CrRT(\infty)}
 \nc{\Rla}{\CrRT(\lambda)}
 \nc{\rvstbl}{\bar{T}}
 \nc{\Rinfv}{\rvstbl_\infty}
 \nc{\rvstblm}{\rvstbl_{\rvsfxv}}
 \nc{\CrRC}{\mathsf{RC}}
 \nc{\RCinf}{\CrRC(\infty)}
 \nc{\RCla}{\CrRC(\lambda)}
 \nc{\RCinfv}{rc_\infty}
 \nc{\CrSE}{\mathsf{R}}
 \nc{\Tvla}{\CrSE_{\lambda}}
 \nc{\tvla}{{r}_{\lambda}}
 \nc{\alen}[3]{\textup{len}^{{#3}}_{{#1},{#2}}}
 \nc{\arig}[2]{\textup{rig}^{{#2}}_{{#1}}}
 \nc{\fwdlen}[2]{\textup{len}^{{#2}}_{{#1}}}
 \nc{\fwdrig}[2]{\textup{rig}^{{#2}}_{{#1}}}
 \nc{\fwdfx}[2]{\mathsf{m}_{{#1},{#2}}}
 \nc{\fwdfxv}{\mathbf{m}}
 \nc{\fwdn}[3]{\mathsf{n}^{{#3}}_{{#1},{#2}}}
 \nc{\fwdnv}{\mathbf{n}}
 \nc{\fwdfxvset}{\mathsf{M}_{\leq}}
 \nc{\rvslen}[2]{\bar{\textup{len}}^{{#2}}_{{#1}}}
 \nc{\rvsrig}[2]{\bar{\textup{rig}}^{{#2}}_{{#1}}}
 \nc{\rvsfx}[2]{\bar{\mathsf{m}}_{{#1},{#2}}}
 \nc{\rvsfxv}{\bar{\mathbf{m}}} 
 \nc{\rvsn}[3]{\bar{\mathsf{n}}^{{#3}}_{{#1},{#2}}}
 \nc{\rvsnv}{\bar{\mathbf{n}}}
 \nc{\rvsfxvset}{\mathsf{M}_{\geq}}
\begin{document}

\title{%
 Rigged Configuration Descriptions of the Crystals\\ 
 $\mathsf{B}(\infty)$ and $\mathsf{B}(\lambda)$ for Special Linear Lie Algebras}

\author[1]{Jin Hong}
\author[2]{Hyeonmi Lee}

\renewcommand\Affilfont{\normalsize}
\affil[1]{%
  Department of Mathematical Sciences\authorcr
  Seoul National University, Seoul 08826, Korea\authorcr
  \texttt{jinhong@snu.ac.kr}}

\affil[2]{%
  Department of Mathematics\authorcr
  Hanyang University, Seoul 04763, Korea\authorcr
  \texttt{hyeonmi@hanyang.ac.kr}}

\pagestyle{myheadings}

\date{}

\maketitle

\begin{abstract}\noindent
The rigged configuration realization $\RCinf$ of the crystal $\Binf$ was originally presented as a certain connected component within a larger crystal.
In this work, we make the realization more concrete by identifying the elements of~$\RCinf$ explicitly for the $A_n$-type case.
Two separate descriptions of~$\RCinf$ are obtained.
These lead naturally to isomorphisms $\RCinf\cong\Tinf$ and $\RCinf\cong\Rinf$, i.e., those with the marginally large tableau and  marginally large reverse tableau realizations of~$\Binf$, that may be computed explicitly.
We also present two descriptions of the irreducible highest weight crystal $\Bla$ in terms of rigged configurations.
These are obtained by combining our two descriptions of~$\RCinf$, the two mentioned isomorphisms, and two existing realizations of~$\Bla$ that were based on~$\Tinf$ and~$\Rinf$.

\medskip
\noindent{\sffamily
A version of this article has been published as J.\,Math.\,Phys. \textbf{58}, 101701, (2017).}\\
\url{https://doi.org/10.1063/1.4986276}
\end{abstract}

\section{Introduction}

The quantum group $U_q(\mathfrak{g})$ is a $q$-deformation of the universal enveloping algebra of a Lie algebra~$\mathfrak{g}$, and crystal bases capture the structure of $U_q(\mathfrak{g})$-modules in its most simplified form~\cite{Kas90,Kas91}.
As these $U_q(\mathfrak{g})$-modules are $q$-deformations of modules over the original Lie algebras, study of these structures can increase our knowledge of the Lie algebras.

The crystal~$\Binf$ is the crystal base of the negative part $U_q^-(\mathfrak{g})$ of a quantum group.
There are many realizations of~$\Binf$ that were obtained through various different approaches.
Some of these, such as~\cite{Kas01,KasSai97,Lit94,Lit98,NakZel97}, to name a few, are valid for arbitrary symmetrizable Kac-Moody algebras, and there are many other research results, each covering a smaller subset of Kac-Moody algebra types.
The goal of this work is to present an explicit description of the $A_n$-type crystal~$\Binf$, using certain combinatorial objects referred to as rigged configurations.

Rigged configurations~\cite{Bet31,KerKirRes86,KirRes86} were introduced as combinatorial objects parametrizing the eigenvectors of the Hamiltonian for certain integrable quantum systems.
The connection between the eigenvectors of the Hamiltonian and the highest weight vectors of a certain crystal was revealed in~\cite{OkaSchShi03b}, and this led to the concept of rigged configurations being used as a tool~\cite{OkaSakSch13,Sak14,Sch06,SalScr} for studying the structure of many crystals.

The realization~$\RCinf$ of~$\Binf$, given in terms of rigged configurations, was introduced in~\cite{SalScr,SalScr3}, for all symmetrizable Kac-Moody types.
To create~$\RCinf$, a crystal structure was first defined on a certain larger set by modifying the Kashiwara operator actions that were previously defined for a similar construction~\cite{Sch06}.
The set~$\RCinf$ was then presented as the connected component containing the multi-partition consisting only of empty parts, which is the rigged configuration that serves the role of the highest weight element.

The main contribution of this work is in making the realization~$\RCinf$ for the $A_n$-type crystal~$\Binf$ more concrete by explicitly describing the elements of~$\RCinf$.
To do this, we first recall that every element of~$\Binf$ may be reached from the highest weight element through a certain formatted sequence of lowering Kashiwara operator actions that is uniquely determined by the element.
We then compute the series of these actions on the highest weight rigged configuration within the crystal of arbitrary rigged configurations.
This provides us with an explicit listing, with no duplicates, of all the elements of~$\RCinf$.
Furthermore, we show how our computations may be reversed to obtain the unique formatted sequence of Kashiwara operator actions that leads to any given element of~$\RCinf$.
This allows one to distinguish elements of~$\RCinf$ from other elements of the larger crystal of all rigged configurations.
We also repeat all of the above with another family of formatted lowering Kashiwara operator sequences.

The first and second families of formatted sequences we use are closely related to the structures of the marginally large tableau realization $\Tinf$~\cite{HonLee08} and the marginally large reverse tableau realization $\Rinf$~\cite{Lee14} of~$\Binf$, respectively.
The connections are such that our two descriptions of~$\RCinf$ lead naturally to the crystal isomorphisms $\Tinf \cong \RCinf$ and $\Rinf \cong \RCinf$ that can be computed explicitly.

Note that a crystal isomorphism between $\Tinf$ and $\RCinf$ had appeared previously in~\cite{SalScr2}.
The isomorphism there was constructed by lifting the isomorphisms $\Tla \cong \RCla$, made available for each $\la\in P^+$, that involves bijections between rigged configurations and tensor products of Kirillov-Reshetikhin crystals, to the situation of~$\Tinf$ and~$\RCinf$. 
Here, $\Tla$ and $\RCla$ are the semistandard tableau and rigged configuration realizations of the highest weight irreducible crystal~$\Bla$.
This approach is very different from our direct isomorphism between $\Tinf$ and~$\RCinf$ that is presented after securing a concrete description of~$\RCinf$ as a set.

Another contribution of this work is in providing two new descriptions of~$\Bla$, the irreducible highest weight crystal.
It is known~\cite{Nak99} that $\Bla$ can be embedded in the tensor product of~$\Binf$ and a certain single-element crystal, so that $\Bla$ may essentially be seen as a subcrystal of~$\Binf$.
This was used in~\cite{Lee14} to obtain two realizations of~$\Bla$ that are essentially subsets of the two realizations $\Tinf$ and~$\Rinf$ of~$\Binf$.
Because our crystal isomorphisms $\Tinf \cong \RCinf$ and $\Rinf \cong \RCinf$ are so direct, we are able to translate the two realization of~$\Bla$ given by~\cite{Lee14} into those given in terms of rigged configurations.

We expect the approach of this paper to be extendable to other finite simple Lie algebra types and lead to rigged configuration descriptions of~$\Binf$ and~$\Bla$ for these types.

\section{Preliminary}

The rest of this paper will deal only with the $A_n$-type quantized universal enveloping algebra and its crystals.
Throughout this paper, Young tableaux will be displayed in the English notation with their rows numbered from top to bottom, so that the top row of a tableau is referred to as its first row.

Let $\fwdfxv = (\fwdfx{i}{j})_{i,j}$ be a collection of non-negative integers, where the indices span over the range $1 \leq j \leq n$ and $1 \leq i \leq n-j+1$, or, equivalently,
\begin{equation}
\label{eq:fwdfxrange}
1 \leq i, j \quad\text{and}\quad i+j \leq n+1.
\end{equation}
We will say that such an $\fwdfxv$ is \emph{weakly increasing} with respect to the index~$i$, if it satisfies
\begin{equation}
\label{eq:winc}
\fwdfx{i}{j} \leq \fwdfx{i+1}{j},
\end{equation}
for every meaningful choice of the indices~$i$ and~$j$.
The set of all weakly increasing~$\fwdfxv$ will be denoted by~$\fwdfxvset$.

To any collection $\fwdfxv = (\fwdfx{i}{j})_{i,j}$ of non-negative integers, whose indices cover the range~\eqref{eq:fwdfxrange}, we can associate the product of lowering Kashiwara operators
\begin{align}
\label{eq:exp1}
\bftil^{\,\fwdfxv} &= 
  \bftil^{\,\fwdfx{*}{n}}
  \cdots
  \bftil^{\,\fwdfx{*}{3}}
  \bftil^{\,\fwdfx{*}{2}}
  \bftil^{\,\fwdfx{*}{1}},\\
\intertext{where each}
\label{eq:exp2}
\bftil^{\,\fwdfx{*}{j}} &=
  \ftil_{n-j+1}^{\,\fwdfx{n-j+1}{j}}
  \cdots
  \ftil_{3}^{\,\fwdfx{3}{j}}
  \ftil_{2}^{\,\fwdfx{2}{j}}
  \ftil_{1}^{\,\fwdfx{1}{j}}.
\end{align}
Notice that the sequence formed by the Kashiwara operator indices appearing in~$\bftil^{\,\fwdfxv}$ is identical to the sequence of indices appearing in
\begin{equation}
\label{eq:redexp}
(s_1)(s_2 s_1) \cdots\cdots (s_{n-1}\cdots s_2 s_1)(s_n\cdots s_2 s_1),
\end{equation}
a reduced expression for the longest element of the $A_n$-type Weyl group.

The following restatement of a result appearing in~\cite{BerZel93,BerZel96} shows that the set $\fwdfxvset$ may be used as a labeling system for the elements of~$\Binf$.

\begin{lemma}
\label{lem:bij}
Let $\Binfv$ be the highest weight element of~$\Binf$.
The function that maps~$\fwdfxv$ to~$\bftil^{\,\fwdfxv} \Binfv$ is a bijection from~$\fwdfxvset$ to~$\Binf$.
\end{lemma}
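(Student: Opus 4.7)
The plan is to recognize the statement as an instance of the standard \emph{string parametrization} of $\Binf$ attached to a reduced expression of the longest Weyl-group element, specialized to the specific reduced word in~(\ref{eq:redexp}), and then to invoke the explicit type-$A_n$ description of the resulting string cone due to Berenstein and Zelevinsky. First, I would observe that since no $\ftil_i$ annihilates any element of $\Binf$, the assignment $\fwdfxv \mapsto \bftil^{\,\fwdfxv}\Binfv$ is well-defined as a function from $\fwdfxvset$ to $\Binf$. Writing $\mathbf{i} = (i_1,\ldots,i_N)$ for the sequence of indices read off from~(\ref{eq:redexp}), the formulas~(\ref{eq:exp1})--(\ref{eq:exp2}) are arranged so that $\bftil^{\,\fwdfxv}\Binfv = \ftil_{i_1}^{c_1}\cdots\ftil_{i_N}^{c_N}\Binfv$, where the tuple $(c_1,\ldots,c_N)$ is simply the array $\fwdfxv=(\fwdfx{i}{j})$ rewritten in the linear order induced by the nested products.

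Next, I would recall the standard fact that for a reduced expression $\mathbf{i}$ of the longest element, every $b\in\Binf$ admits a unique tuple $(c_1,\ldots,c_N)\in\mathbb{Z}_{\geq 0}^N$, determined recursively by $c_k = \epsilon_{i_k}(b_{k-1})$ and $b_k = \etil_{i_k}^{c_k} b_{k-1}$ with $b_0 = b$, such that $b_N = \Binfv$ and $b = \ftil_{i_1}^{c_1}\cdots\ftil_{i_N}^{c_N}\Binfv$. Consequently, the map $(c_1,\ldots,c_N)\mapsto\ftil_{i_1}^{c_1}\cdots\ftil_{i_N}^{c_N}\Binfv$ is an explicit bijection from the \emph{string cone} (the image of the tuple-extraction map) onto~$\Binf$, with inverse obtained by reading off maximal $\etil$-exponents in the prescribed order. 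It therefore remains to identify this string cone, for our specific reduced expression~(\ref{eq:redexp}), with $\fwdfxvset$; this is precisely the type-$A_n$ string-cone description of Berenstein and Zelevinsky, which, once translated through the packaging~(\ref{eq:exp1})--(\ref{eq:exp2}), becomes the weakly increasing condition~(\ref{eq:winc}).

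The hardest step of a self-contained derivation would be this final combinatorial identification of the string cone with $\fwdfxvset$. A natural direct route would be induction on~$n$: the rightmost block $\bftil^{\,\fwdfx{*}{1}}$ contains the sole occurrences of $\ftil_n$, and one could first show that the string-cone constraints on its exponents force $\fwdfx{1}{1}\leq\cdots\leq\fwdfx{n}{1}$, then reduce the analysis of the remaining blocks, via an appropriate Kashiwara embedding, to the corresponding $A_{n-1}$-type statement. Making this reduction uniformly precise across all blocks is the delicate step handled in full generality by Berenstein and Zelevinsky, which is why their result is invoked rather than re-derived here.
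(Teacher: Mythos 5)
Your proposal is correct and matches the paper's treatment: the paper gives no proof of Lemma~\ref{lem:bij} at all, presenting it explicitly as a restatement of the Berenstein--Zelevinsky result on string parametrizations for the reduced expression~\eqref{eq:redexp}, which is precisely the fact you invoke. Your additional outline (well-definedness, uniqueness of string data, identification of the string cone with $\fwdfxvset$) is a faithful expansion of what that citation packages, so there is nothing further to reconcile.
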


Let us now briefly review some of the basic theory of the rigged configuration realization of crystal~$\Binf$.
Only the very minimum contents that are needed in this work will be covered.
What we provide below will be informal, and the reader is asked to consult the original papers~\cite{Sch06,SalScr} for the full precise definitions.
Note that we will not be dealing with the vacancy numbers in this paper, since they are redundant information in the rigged configuration model for~$\Binf$.

A rigged \emph{partition} may be understood to be a partition, i.e., a Young diagram, that has each of its rows labeled on the right by an integer.
A rigged partition is allowed to be empty and the row labels are referred to as the riggings.
A rigged \emph{configuration} of $A_n$-type is an ordered $n$-tuple of rigged partitions.

The lowering Kashiwara operator $\ftil_i$ acts on a rigged configuration by adding a box to its $i$-th component and changing some of the riggings.
One determines the smallest non-positive rigging present in the $i$-th rigged partition, adds a box to the longest row with the said rigging, and decrements the corresponding rigging by~$1$.
If there are no non-positive riggings, a new row consisting of a single box is created with the rigging~$-1$.
The only riggings that are affected by the $\ftil_i$ action, other than that of the row the box was added to, are those that belong to the $i$-th  rigged partition and its directly neighboring rigged partitions.
However, only those riggings that correspond to rows that are strictly longer than the (possibly empty) row the box was added to are changed.
The riggings of the longer rows within the $i$-th rigged partition are decremented by~$2$, and riggings of the longer rows within the neighboring rigged partitions are incremented by~$1$.
The full definition of the Kashiwara operator action is slightly more complicated than the description we gave here, but what we have explained should be sufficient for the purpose of this paper.

Let~$\RCinfv$ be the rigged configuration that consists of $n$ empty rigged partitions.
We have not explained the actions of the raising Kashiwara operator $\etil_i$, but $\RCinfv$ is a highest weight element.
The connected component within the crystal of rigged configurations containing $\RCinfv$ is denoted by~$\RCinf$.
The following result was given by~\cite{SalScr}.

\begin{lemma}
\label{lem:2}
The crystal~$\RCinf$ is isomorphic to~$\Binf$.
\end{lemma}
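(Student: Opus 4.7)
The plan is to invoke Kashiwara and Saito's characterization of $\Binf$: up to isomorphism, $\Binf$ is the unique upper seminormal crystal equipped with a weight-zero distinguished element such that, for every simple root index $i$, there exists a strict crystal embedding from the crystal into its own tensor product with the elementary $i$-th crystal $B_i$, carrying the distinguished element to the tensor of itself with the highest weight of $B_i$. Thus the task reduces to verifying these hypotheses for the pair $(\RCinf, \RCinfv)$.

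The first, straightforward step is to confirm that $\RCinfv$ is highest weight of weight zero. This is immediate from the action of $\etil_i$: applied to the all-empty $n$-tuple there is no box to remove from the $i$-th component, so $\etil_i \RCinfv = 0$, and the weight contribution of each empty rigged partition is manifestly trivial. Next, I would verify that $\RCinf$ is upper seminormal, meaning $\varepsilon_i(x) = \max\{k \geq 0 \mid \etil_i^k x \neq 0\}$ for every $x$. The rule that $\etil_i$ targets the smallest non-positive rigging in the $i$-th partition, and is blocked precisely when all such riggings have become positive, suggests that $\varepsilon_i$ equals a combinatorial length-minus-rigging statistic on the $i$-th component; this is a local combinatorial check once the full definitions of $\etil_i$ and $\varepsilon_i$ from~\cite{Sch06,SalScr} are in hand.

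The heart of the argument is the construction of a strict embedding $\Psi_i : \RCinf \hookrightarrow \RCinf \otimes B_i$ for each $i$. Following the Kashiwara-Saito recipe, the natural choice is to split off from each rigged configuration the leftmost $\etil_i^*$-string associated to Kashiwara's star involution: record the length of that string in the $B_i$ factor, and correspondingly modify the riggings of the $i$-th (and possibly of the neighboring) components so that the tensor-product Kashiwara operator actions on $\RCinf \otimes B_i$ match those already defined on $\RCinf$. One must check that $\Psi_i$ actually lands in $\RCinf \otimes B_i$, sends $\RCinfv$ to the tensor of $\RCinfv$ with the highest weight of $B_i$, and commutes strictly with every $\ftil_j$ and $\etil_j$.

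The main obstacle is precisely this last verification. Strictness forces a careful case analysis tracking how the rigging adjustments demanded by $\ftil_j$ (the $-2$ on the same component, the $+1$ on the neighbors, and the choice of target row) interact with the string-splitting used to define $\Psi_i$, especially when $j$ is adjacent to $i$, or when the smallest non-positive rigging in the $i$-th component lies in a row affected by the $\Psi_i$ adjustment. Once the embeddings are established and shown to respect the highest weight elements, Kashiwara-Saito's theorem delivers the unique crystal isomorphism $\RCinf \to \Binf$ carrying $\RCinfv$ to $\Binfv$, which is the content of the lemma.
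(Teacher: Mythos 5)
The paper offers no proof of this lemma at all: it is imported from~\cite{SalScr} (the sentence immediately preceding the statement reads ``The following result was given by~\cite{SalScr}''), so there is no internal argument to compare yours against. Your outline does reproduce the strategy used in that reference --- check that $\RCinfv$ is a highest weight element of weight zero, establish the combinatorial formula for $\varepsilon_i$, and build strict embeddings $\Psi_i:\RCinf\hookrightarrow\RCinf\otimes B_i$ sending $\RCinfv$ to $\RCinfv\otimes b_i$ so that the Kashiwara--Saito characterization of $\Binf$ applies --- so you have identified the correct route.

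As written, however, the proposal is a roadmap rather than a proof, and the two steps that carry all of the content are exactly the ones you defer. First, the claim that $\RCinf$ is upper seminormal, i.e.\ that the defined statistic $\varepsilon_i$ agrees with $\max\{k\ge 0: \etil_i^{\,k}x\ne 0\}$, is reduced to ``a local combinatorial check once the full definitions are in hand''; that check is a genuine lemma about singular strings and must be done. Second, and more seriously, $\Psi_i$ is described only as ``split off the leftmost $\etil_i^{\,*}$-string \dots\ and correspondingly modify the riggings,'' without specifying which rows of which components change and by how much, and you yourself flag the verification that $\Psi_i$ strictly intertwines every $\etil_j$ and $\ftil_j$ as ``the main obstacle'' without carrying it out. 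That verification is precisely where the rigging rules (the $-2$ on the $i$-th component, the $+1$ on neighboring components, the selection of the target row) must be confronted case by case, including the adjacent indices $j=i\pm1$. Until the map is pinned down and that commutation is proved, the characterization theorem cannot be invoked, so the argument as it stands does not establish the lemma; it only points at the proof in~\cite{SalScr}.
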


The next subjects we review are the marginally large tableau and the marginally large reverse tableau realizations of~$\Binf$.

A basic $i$-column (of $A_n$-type), defined for each $1\leq i \leq n$, is the highest weight element of the Young tableau realization~\cite{KasNak94} of~$\CrB(\Lambda_i)$.
In other words, this is a single column of boxes of height~$i$ such that the $j$-th box (from the top) contains~$j$, for $1\leq j \leq i$.
A marginally large tableau is a semi-standard tableau that contains precisely one basic $i$-column, for each $1\leq i \leq n$, among its columns.
Likewise, a marginally large reverse tableau is a semi-standard reverse tableau that contains precisely one basic $i$-column, for each $1\leq i \leq n$.
The general form of a marginally large tableau and a marginally large reverse tableau for $A_4$-type are given later in this paper by~\eqref{eq:fxT} and~\eqref{eq:mlrt}, respectively.
In this paper, the set of all marginally large tableaux and the set of all marginally large reverse tableaux will be denoted by~$\Tinf$ and~$\Rinf$, respectively.

The lowering Kashiwara operator $\ftil_i$ acts on a marginally large (reverse) tableau mostly as it would act on a normal semi-standard (reverse) tableau.
The only difference is that, when the resulting (reverse) tableau is no longer marginally large, a single basic $i$-column is inserted at the appropriate position to make it marginally large again.

It was shown in~\cite{Cli98,HonLee08,Rei97,Sai12,NakZel97,Lee14} that both $\Tinf$ and~$\Rinf$ are crystals that are isomorphic to~$\Binf$.
The highest weight elements in these two realizations of~$\Binf$ are their respective smallest elements, i.e., the marginally large tableau and marginally large reverse tableau consisting of just the $n$ basic columns.

The final subject we review is an interpretation of the irreducible highest weight crystal $\Bla$ as a subset of the crystal~$\Binf$.
For each $\la \in P^+$, the crystal $\Tvla = \{\tvla\}$, consisting of a single element, is defined to have the following crystal structure.
\begin{equation}
\label{eq:sscrys}
\operatorname{wt}(\tvla) = \la, \quad
\varepsilon_i(\tvla)=-\la(h_i), \quad
\varphi_i(\tvla)=0, \quad
\etil_i(\tvla)=0, \quad
\ftil_i(\tvla)=0.
\end{equation}
For each $\la\in P^+$, the symbol $b_\la$ will denote the highest weight element of~$\Bla$.
The following result appeared in~\cite{Nak99}, and an essentially equal claim was also given earlier by~\cite{Kas95}.

\begin{lemma}
\label{lem:cryimb}
For each $\la \in P^+$, there exist a unique strict crystal embedding
\begin{equation*}
\Bla \hookrightarrow \Binf \otimes \Tvla,
\end{equation*}
that maps $b_\lambda$ to $\Binfv \otimes \tvla$.
\end{lemma}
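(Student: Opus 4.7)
The plan is to split the proof into uniqueness and existence.

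Uniqueness is immediate from the connectedness of $\Bla$: every element of $\Bla$ is reached from $b_\lambda$ by a finite sequence of Kashiwara operators, and a strict crystal morphism commutes with $\etil_i$ and $\ftil_i$ (treating $0$ as fixed) while preserving the weight and the functions $\varepsilon_i,\varphi_i$. Hence once the image of $b_\lambda$ is prescribed, the value of the morphism on every element is forced.

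For existence, I would construct the embedding by considering the subcrystal $C\subseteq \Binf\otimes\Tvla$ generated by $\Binfv\otimes\tvla$ under the Kashiwara operators, and then identifying $C$ with $\Bla$. The preliminary step is to check that $\Binfv\otimes\tvla$ is a highest weight vector of weight $\lambda$: the weight computes as $\operatorname{wt}(\Binfv)+\operatorname{wt}(\tvla)=0+\lambda=\lambda$, and applying the tensor product rule for $\etil_i$ together with the inequality $\varphi_i(\Binfv)=0\geq -\lambda(h_i)=\varepsilon_i(\tvla)$, which is guaranteed by $\lambda\in P^+$, yields $\etil_i(\Binfv\otimes\tvla)=\etil_i(\Binfv)\otimes\tvla=0$ for every $i$. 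Since $C$ is then a connected crystal with a unique highest weight vector of weight $\lambda$, appealing to the uniqueness of the irreducible highest weight crystal of weight $\lambda$ produces the isomorphism $\Bla\xrightarrow{\sim}C$, and composition with the inclusion $C\hookrightarrow \Binf\otimes\Tvla$ gives the desired strict embedding carrying $b_\lambda$ to $\Binfv\otimes\tvla$.

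The main obstacle is confirming that $C$ genuinely realizes $\Bla$. The delicate point is that $\Tvla$ is not a seminormal crystal, since $\varepsilon_i(\tvla)=-\lambda(h_i)$ can be strictly negative, so the standard machinery for tensor products of seminormal crystals does not apply off the shelf. One must verify the Kashiwara axioms for $C$ directly: for each $b\otimes\tvla\in C$ compute $\varepsilon_i(b\otimes\tvla),\varphi_i(b\otimes\tvla)$ and the actions of $\etil_i,\ftil_i$ via the tensor product rule, and check that the resulting $i$-strings have the lengths predicted by $\Bla$. A clean way to rule out any spurious elements appearing in $C$ is to compare characters: show that the formal character of $C$ equals that of $\Bla$, using that the Kashiwara operator actions on elements of the form $b\otimes\tvla$ behave, within the highest-weight range dictated by~$\lambda$, exactly as the operators on $\Bla$ do.
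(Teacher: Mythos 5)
The paper does not actually prove this lemma; it is quoted as a known result with citations to Nakashima's polyhedral realization paper and to Kashiwara's \emph{On crystal bases}, so there is no internal proof to compare against. Judged on its own terms, your uniqueness argument is fine (connectedness of $\Bla$ plus strictness forces the morphism), and your computation that $\Binfv\otimes\tvla$ is a highest weight element of weight $\la$ via the tensor product rule and $\varphi_i(\Binfv)=0\geq -\la(h_i)=\varepsilon_i(\tvla)$ is correct.

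The existence half, however, has a genuine gap exactly where the content of the lemma lies. The step ``$C$ is a connected crystal with a unique highest weight vector of weight $\la$, hence $C\cong\Bla$ by uniqueness of the irreducible highest weight crystal'' is not valid for abstract crystals: uniqueness of $\Bla$ holds within the class of normal (integrable) crystals, i.e., those arising from crystal bases of $U_q(\mathfrak{g})$-modules, and there are plenty of connected abstract crystals with a single highest weight element of weight $\la$ that are not isomorphic to $\Bla$. Since $\Tvla$ is not seminormal, $\Binf\otimes\Tvla$ is not known a priori to be normal, so you cannot invoke that uniqueness until you have independently shown that $C$ is the crystal of an integrable module --- which is precisely what needs proving. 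You name this obstacle yourself, but the two remedies you offer do not close it: ``verify the Kashiwara axioms directly'' is a restatement of the goal rather than an argument, and the character comparison is circular, since you have no independent handle on the character of $C$. The standard proof (Kashiwara) is algebraic rather than combinatorial: one uses the surjection $U_q^-(\mathfrak{g})\twoheadrightarrow V(\la)$, $P\mapsto Pv_\la$, shows it carries the crystal lattice $L(\infty)$ onto $L(\la)$ and induces a map $\Binf\to\Bla\sqcup\{0\}$ compatible with the Kashiwara operators, and then dualizes this to the embedding $\Bla\hookrightarrow\Binf\otimes\Tvla$. Some input of this module-theoretic kind (or an already-established combinatorial model of $\Bla$ inside $\Binf\otimes\Tvla$, as in the cited references) is unavoidable; a purely formal argument from the tensor product rule cannot establish the result.
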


The existence of the embedding given by this claim implies that the connected component in the crystal $\Binf\otimes\Tvla$ containing the element $\Binfv \otimes \tvla$ is isomorphic to~$\Bla$.
This connected component was made more explicit in~\cite{Rei97,Sai12,Lee14,Nak99} with $\Binf$ replaced by its concrete realizations $\Tinf$ and~$\Rinf$.

\section{$A_4$-type Example}
\label{sec:A4}

The rigged configuration realization~$\RCinf$ of the crystal~$\Binf$ is concrete in the sense that it makes explicit computations possible.
However, its presentation as a certain connected component within a larger crystal of rigged configurations makes direct access to its elements difficult.
In particular, the realization $\RCinf$ currently lacks a description that allows for its elements to be explicitly listed, and it is not yet possible to determine whether a given rigged configuration belongs to~$\RCinf$ without applying the Kashiwara operators.

The bijection given by Lemma~\ref{lem:bij} is a very useful tool in this situation.
One can expect to resolve the mentioned difficulties by computing and collecting the elements $\bftil^{\,\fwdfxv} \RCinfv$ for all $\fwdfxv\in\fwdfxvset$.
This is precisely what will be done in the next section.
The $A_4$-type computation is provided in this section as an example so that it is easier to follow through the computations of the next section.

The rigged configuration $\ftil_1^{\,\fwdfx{1}{1}} \RCinfv$, which consists of four rigged partitions, is as follows.
\begin{center}
\includegraphics{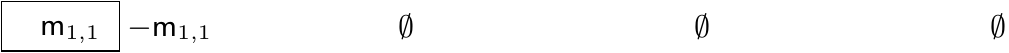}
\end{center}
The first rigged partition consists of a single row of boxes, and the other three rigged partitions are empty, except that the first partition would also be empty, when $\fwdfx{1}{1}=0$.
The length of the first partition is given by the number written within the rectangle and its rigging is as given to the right of the rectangle.
Each application of $\ftil_1$ adds one box to the row and decrements the rigging by~$1$.
There are no other rows whose rigging can be affected by the action, and the rigging of the only existing row stays negative (non-positive), so that the next application of $\ftil_1$ occurs in the same manner.

The result of computing $\bftil^{\,\fwdfx{*}{1}} \RCinfv$ is the following rigged configuration.
\begin{center}
\includegraphics{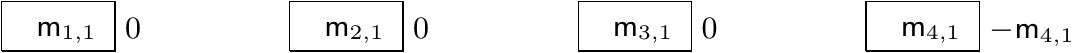}
\end{center}
Each of the four partitions consists of a single row (or could be empty).
Every $\ftil_i$ action adds one box to the $i$-th part and decrements the rigging of its only row by~$1$.
The action can also increment the rigging of the neighboring $(i-1)$-th partition row by~$1$, but this happens only if the row is longer than the $i$-th partition row that is having a box added to itself.
The weakly increasing property
$\fwdfx{1}{1} \leq \fwdfx{2}{1} \leq \fwdfx{3}{1} \leq \fwdfx{4}{1}$,
coupled with the order of the actions, imply that the incremental growth of the $(i-1)$-th partition rigging that starts from $-\fwdfx{i-1}{1}$ always reaches~$0$ and then stops.
Thus, the first wave of Kashiwara operators $\bftil^{\,\fwdfx{*}{1}}$ leaves the rigged configuration in the above simple state that has the first three riggings set to~$0$.

The result of applying $\bftil^{\,\fwdfx{*}{2}}$ to the above rigged configuration is as given below, where
\begin{align*}
\fwdn{2}{1}{1} &= \min\big\{\fwdfx{1}{2}, \fwdfx{2}{1}-\fwdfx{1}{1} \big\},\\
\fwdn{3}{1}{1} &= \min\big\{\fwdfx{2}{2}, \fwdfx{3}{1}-(\fwdfx{2}{1}-\fwdn{2}{1}{1})\big\},\\
\fwdn{4}{1}{1} &= \min\big\{\fwdfx{3}{2}, \fwdfx{4}{1}-(\fwdfx{3}{1}-\fwdn{3}{1}{1})\big\}.
\end{align*}
\begin{center}
\includegraphics{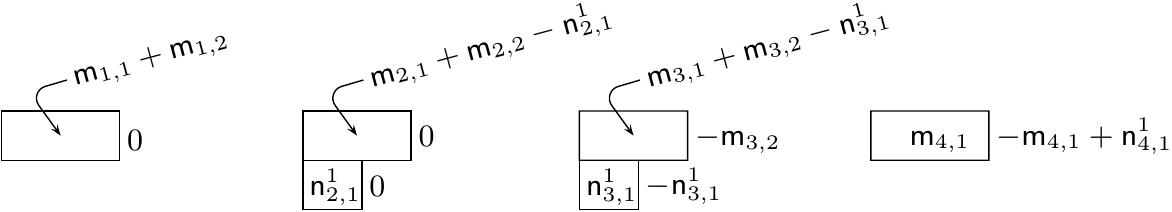}
\end{center}
The second wave of Kashiwara operator applications begins with the $\fwdfx{1}{2}$-many $\ftil_1$-actions increasing the length of the only row of the first partition and also decreasing its rigging.
This series of $\ftil_1$ actions also increases the rigging of the second partition, which is of length $\fwdfx{2}{1}$ at that time.
The second part rigging that is initially~$0$ has the possibility of reaching~$\fwdfx{1}{2}$, the number of $\ftil_1$-actions.
However, since the increment of the rigging lasts only while the row being lengthened is shorter than the row whose rigging is being affected, the rigging growth must be bounded by the difference in row lengths $\fwdfx{2}{1}-\fwdfx{1}{1}$, which is non-negative by the weakly increasing assumption.
This leads naturally to the introduction of the number~$\fwdn{2}{1}{1}$, which is defined to be the minimum of the two mentioned numbers.

After the $\ftil_1$-actions, the rigging $\fwdn{2}{1}{1}$ on the second partition is often strictly positive.
This forces the $\ftil_2$-actions into creating an additional row on the second partition.
Each $\ftil_2$-action reduces the riggings of the first row and the newly created second row by~$2$ and~$1$, respectively.
Since $\fwdn{2}{1}{1} \leq \fwdfx{1}{2} \leq \fwdfx{2}{2}$, the $\ftil_2$-actions eventually returns to lengthening the first row of the second partition, as the riggings of both rows reach $-\fwdn{2}{1}{1}$.

One can also verify that the combined effect on the first partition of all the $\ftil_2$-actions is to raise its rigging $-\fwdfx{1}{2}$ back to~$0$.
To see this, note that $\fwdn{2}{1}{1} \leq \fwdfx{1}{2} \leq \fwdfx{1}{1} + \fwdfx{1}{2}$, so that the rigging of the first partition reaches $-\fwdfx{1}{2}+\fwdn{2}{1}{1}$, as the second row of the second partition grows to its full length of~$\fwdn{2}{1}{1}$.
Now, the length of the first row of the first partition is possibly larger than that of the first row of the second partition by
$\max\{0, (\fwdfx{1}{1}+\fwdfx{1}{2})-\fwdfx{2}{1}\} = \fwdfx{1}{2} - \fwdn{2}{1}{1}$,
so that the remaining $(\fwdfx{2}{2}-\fwdn{2}{1}{1})$-many $\ftil_2$-actions bring the rigging of the first partition to
$(-\fwdfx{1}{2}+\fwdn{2}{1}{1}) + \min\{ \fwdfx{2}{2}-\fwdn{2}{1}{1}, \fwdfx{1}{2} - \fwdn{2}{1}{1} \} = 0$.

Let us also consider the effect of the $\ftil_2$-actions on the rigging of the third partition.
We first observe that $\fwdn{2}{1}{1} \leq \fwdfx{2}{1}-\fwdfx{1}{1} \leq \fwdfx{2}{1} \leq \fwdfx{3}{1}$, so that the rigging increases from~$0$ to~$\fwdn{2}{1}{1}$, while the second row of the second partition grows to its full length.
The remaining  $(\fwdfx{2}{2}-\fwdn{2}{1}{1})$-many $\ftil_2$-actions bring the rigging of the third partition to
$\fwdn{2}{1}{1} + \min\{ \fwdfx{2}{2}-\fwdn{2}{1}{1}, \fwdfx{3}{1} - \fwdfx{2}{1} \} = \fwdn{3}{1}{1}$.

After similarly handling the $\fwdfx{3}{2}$-many $\ftil_3$-actions, one finds that the second wave of Kashiwara operators $\bftil^{\,\fwdfx{*}{2}}$ returns all the riggings of the first two rigged partitions back to~$0$.

Further applications of all the Kashiwara operators contained in the remaining $\bftil^{\,\fwdfx{*}{3}}$ and~$\bftil^{\,\fwdfx{*}{4}}$ produce the rigged configuration given below, where
\begin{align*}
\fwdn{2}{2}{1} &= \min\big\{
  \fwdfx{1}{3},
   (\fwdfx{2}{1}+\fwdfx{2}{2}-\fwdn{2}{1}{1})
  -(\fwdfx{1}{1}+\fwdfx{1}{2})
\big\},\\
\fwdn{3}{2}{1} &= \min\big\{
 \fwdfx{2}{3},
  (\fwdfx{3}{1}+\fwdfx{3}{2}-\fwdn{3}{1}{1})
 -(\fwdfx{2}{1}+\fwdfx{2}{2}-\fwdn{2}{1}{1}-\fwdn{2}{2}{1})
\big\},\\
\fwdn{2}{3}{1} &= \min\big\{
 \fwdfx{1}{4},
  (\fwdfx{2}{1}+\fwdfx{2}{2}+\fwdfx{2}{3}-\fwdn{2}{1}{1}-\fwdn{2}{2}{1})
 -(\fwdfx{1}{1}+\fwdfx{1}{2}+\fwdfx{1}{3})
\big\},\\
\fwdn{3}{1}{2} &= \min\big\{
 \fwdn{2}{2}{1},
 \fwdn{3}{1}{1}-\fwdn{2}{1}{1}
\big\}.
\end{align*}
\begin{center}
\includegraphics{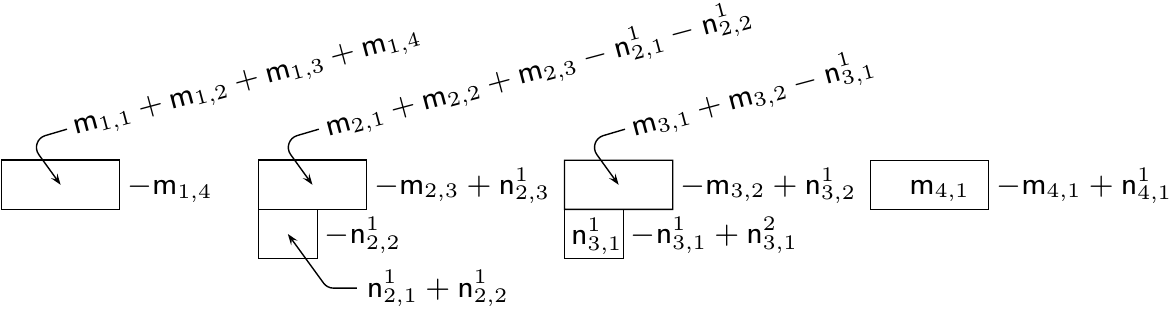}
\end{center}
As before, each $\fwdn{i}{j}{k} = \min\{\ ,\ \}$ definition has its origin in a comparison between a count of $\ftil_i$-actions and a difference of row lengths.

\section{Listing the Elements of $\RCinf$}

Let us now turn to expressing the rigged configuration $\bftil^{\,\fwdfxv} \RCinfv$, for the general $A_n$-type.
As in the previous section, our computations will be carried out in the most straightforward step by step manner.
Although this requires a lengthy and complicated inductive argument, the special structures of~$\fwdfxv$ and $\bftil^{\,\fwdfxv}$ make the computation of the general $\bftil^{\,\fwdfxv} \RCinfv$ feasible.

\begin{algorithm}
 \For{$j=1,2,\dots,n$}{
   \For{$i=1,2,\dots,n-j+1$}{
     $\fwdn{i}{j}{0} \leftarrow \fwdfx{i}{j}$\;
   }
 } 
 \For{$k=1,2,\dots,n$}{
   \For{$j=1,2,\dots,n-k+1$}{
     $\fwdn{1}{j}{k} \leftarrow 0$\;
     \For{$i=2,3,\dots,n-j-k+2$}{
       $\fwdn{i}{j}{k} \leftarrow \min\left\{
          \fwdn{i-1}{j+1}{k-1},
            \sum_{x=1}^{j  } \fwdn{i}{x}{k-1}
           -\sum_{x=1}^{j-1} \fwdn{i}{x}{k}
           -\sum_{x=1}^{j  } \fwdn{i-1}{x}{k-1}
           +\sum_{x=1}^{j  } \fwdn{i-1}{x}{k}
       \right\}$\;
     }
   }
 }
\caption{Generation of $\fwdnv = (\fwdn{i}{j}{k})_{i,j,k}$ from $\fwdfxv = (\fwdfx{i}{j})_{i,j}$}\label{alg:1}
\end{algorithm}

We first extend each $\fwdfxv = (\fwdfx{i}{j})_{i,j}\in\fwdfxvset$ into a certain larger collection of integers $\fwdnv = (\fwdn{i}{j}{k})_{i,j,k}$ through the iterative assignments\footnote{Brief explanation for those completely new to algorithms: Each ``$\leftarrow$'' specifies for its right-hand side value to be assigned to its left-hand side name. Statements are to be executed in the top-to-bottom order, except that each \textbf{for}-\textbf{end} construction specifies for its contents to be repeatedly executed.
These repetitions are to be carried out each time with the indices $i$, $j$, and~$k$ fixed to one of the values listed to their right, sequentially in the order listed.} given by Algorithm~\ref{alg:1}.
It is easy to check that the order of the assignments is such that all terms appearing on the right-hand side of the main assignment already has values assigned to it at the time of the assignment.
The indices covering the range
\begin{equation}
\label{eq:ijkrange}
1 \leq i,j,k \quad\text{and}\quad i+j+k \leq n+2
\end{equation}
correspond to the (strictly) extended part.
In the rest of this section, we assume that $\fwdfxv = (\fwdfx{i}{j})_{i,j}$ is a collection of non-negative integers that is weakly increasing with respect to the index~$i$ and that $\fwdnv = (\fwdn{i}{j}{k})_{i,j,k}$ is its extension obtained through Algorithm~\ref{alg:1}.

We ask the reader to read Definition~\ref{def:lenrig} and Theorem~\ref{thm:fwdgenA}, given at the end of this section, before continuing.
A lengthy computation will follow below, and it would be helpful to know where we are headed.

Recall that many inequalities were checked during our computation of~$\bftil^{\,\fwdfxv} \RCinfv$, for the $A_4$-type.
Below, we provide some properties of the extended~$\fwdnv$ that are essential, when one is working out $\bftil^{\,\fwdfxv} \RCinfv$ for the general $A_n$-type.

\begin{lemma}\label{lem:fwdD1}
We have $\fwdn{i}{j}{k} \leq \fwdn{i-1}{j+1}{k-1}$ for every meaningful choice of the indices.
\end{lemma}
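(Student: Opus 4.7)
The plan is to read the inequality directly off the defining recurrence for $\fwdnv$ given in Algorithm~\ref{alg:1}. In the inner \textbf{for}-loop that assigns a value to $\fwdn{i}{j}{k}$ (for $i \geq 2$), the right-hand side is a minimum of exactly two quantities, and the first of those two quantities is literally $\fwdn{i-1}{j+1}{k-1}$. Since the minimum of two numbers is bounded above by each of them, the desired inequality $\fwdn{i}{j}{k} \leq \fwdn{i-1}{j+1}{k-1}$ drops out without any further work.

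The only substantive thing to verify is that "meaningful choice of indices" is consistent with both sides being defined by the algorithm. The left-hand side $\fwdn{i}{j}{k}$ is produced by the recursive \textbf{for}-\textbf{end} block precisely for $k \geq 1$, $j \geq 1$, $i \geq 2$, and $i+j+k \leq n+2$. Under these constraints the shifted indices satisfy $i-1 \geq 1$, $j+1 \geq 2$, $k-1 \geq 0$, together with $(i-1)+(j+1)+(k-1) = i+j+k-1 \leq n+1$, so $\fwdn{i-1}{j+1}{k-1}$ is indeed defined: as the initialization $\fwdn{i-1}{j+1}{0} = \fwdfx{i-1}{j+1}$ when $k=1$, and via the same recurrence when $k \geq 2$. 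This covers all meaningful index choices.

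There is essentially no obstacle to overcome. The statement is best viewed as a bookkeeping lemma that isolates the simpler of the two bounds packaged inside the $\min$-definition of $\fwdn{i}{j}{k}$; the deeper content, namely the exact role of the alternating-sum expression $E$ that forms the second argument of the $\min$, will be handled by companion lemmas. The short proof will therefore consist of one sentence pointing at the definition in Algorithm~\ref{alg:1} and one remark verifying the index ranges.
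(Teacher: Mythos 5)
Your proposal is correct and matches the paper's own proof, which likewise observes that $\fwdn{i-1}{j+1}{k-1}$ is the first argument of the $\min$ in the defining assignment of Algorithm~\ref{alg:1}, so the inequality is immediate. Your additional check that the shifted indices stay within the range~\eqref{eq:ijkrange} is a harmless extra verification that the paper leaves implicit.
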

\begin{proof}
This is a trivial consequence of the definition
\begin{equation*}
\fwdn{i}{j}{k} = \min\left\{
  \textstyle
  \fwdn{i-1}{j+1}{k-1},
  \sum_{x=1}^{j  } \fwdn{i}{x}{k-1}
 -\sum_{x=1}^{j-1} \fwdn{i}{x}{k}
 -\sum_{x=1}^{j  } \fwdn{i-1}{x}{k-1}
 +\sum_{x=1}^{j  } \fwdn{i-1}{x}{k}
\right\}
\end{equation*}
and is true regardless of any properties of~$\fwdfxv$.
\end{proof}

\begin{lemma}\label{lem:fwdD2}
We have $\fwdn{i}{j}{k} \leq \fwdn{i+1}{j}{k}$ for every meaningful choice of the indices.
In particular, each $\fwdn{i}{j}{k}$ is a non-negative integer.
\end{lemma}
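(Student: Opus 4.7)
The plan is to induct on $k$. The base case $k=0$ is immediate: $\fwdn{i}{j}{0} = \fwdfx{i}{j} \leq \fwdfx{i+1}{j} = \fwdn{i+1}{j}{0}$ by the weakly increasing assumption on $\fwdfxv$. The non-negativity at $k=0$ is also clear since each $\fwdfx{i}{j}$ is non-negative.

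For the inductive step at level $k \geq 1$, assume the claim holds at level $k-1$. I would treat $i \geq 2$ first. Since $\fwdn{i+1}{j}{k}$ is the minimum of its two defining arguments, it suffices to prove $\fwdn{i}{j}{k}$ does not exceed either argument. For the first argument $\fwdn{i}{j+1}{k-1}$, chain Lemma~\ref{lem:fwdD1} with the inductive hypothesis at level $k-1$: $\fwdn{i}{j}{k} \leq \fwdn{i-1}{j+1}{k-1} \leq \fwdn{i}{j+1}{k-1}$. For the second argument, denote by $A(i,j,k)$ the long expression inside the min defining $\fwdn{i}{j}{k}$; then $\fwdn{i}{j}{k} \leq A(i,j,k)$ by definition, and it remains to prove $A(i,j,k) \leq A(i+1,j,k)$. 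This last inequality I would establish by a secondary induction on $j$ (with $i$ and $k$ fixed): rewriting
\begin{equation*}
A(i+1,j,k) - A(i,j,k) = \sum_{x=1}^{j}\bigl[\fwdn{i+1}{x}{k-1} - 2\fwdn{i}{x}{k-1} + \fwdn{i-1}{x}{k-1}\bigr] + \sum_{x=1}^{j}\bigl[\fwdn{i}{x}{k} - \fwdn{i-1}{x}{k}\bigr] - \sum_{x=1}^{j-1}\bigl[\fwdn{i+1}{x}{k} - \fwdn{i}{x}{k}\bigr],
\end{equation*}
one can match the level-$k$ differences against the level-$(k{-}1)$ differences column by column, using Lemma~\ref{lem:fwdD1} to upper-bound each $\fwdn{i+1}{x}{k}-\fwdn{i}{x}{k}$ by a previously established level-$(k{-}1)$ quantity.

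For the remaining case $i=1$, observe that $\fwdn{1}{j}{k}=0$, so the goal reduces to $\fwdn{2}{j}{k} \geq 0$. Both arguments of the min defining $\fwdn{2}{j}{k}$ must be shown non-negative. The first argument $\fwdn{1}{j+1}{k-1}$ is either $\fwdfx{1}{j+1}$ (when $k=1$) or $0$ (when $k\geq 2$), hence non-negative. The second argument simplifies via $\fwdn{1}{x}{k}=0$ and, after applying the inductive hypothesis at level $k-1$ to each term $\fwdn{2}{x}{k-1}-\fwdn{1}{x}{k-1}$, becomes tractable by secondary induction on $j$ using the already-established non-negativity of $\fwdn{2}{x}{k}$ for $x<j$. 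Once the inequality is proven in full, the non-negativity assertion of the lemma follows uniformly from the chain $0 = \fwdn{1}{j}{k} \leq \fwdn{2}{j}{k} \leq \cdots \leq \fwdn{i}{j}{k}$ for $k\geq 1$, combined with the $k=0$ case.

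The main obstacle will be the cumulative comparison $A(i,j,k) \leq A(i+1,j,k)$ (and the parallel non-negativity of the second min argument when $i=1$). Naive term-by-term inductive use of the level-$k$ inequality is insufficient because the partial sums $\sum_{x=1}^{j-1}\fwdn{i+1}{x}{k} - \sum_{x=1}^{j-1}\fwdn{i}{x}{k}$ appear with the \emph{wrong} sign relative to what the inductive hypothesis supplies; the proof must carefully exploit both Lemma~\ref{lem:fwdD1} and the inductive hypothesis at level $k-1$ to trade each surplus level-$k$ increment against an already-available level-$(k{-}1)$ increment, so that the telescoping partial sums balance out at each column $j$.
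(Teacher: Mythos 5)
Your overall framework (induction on $k$, splitting into $i\geq 2$ and $i=1$) matches the paper's, and your treatment of the first min-argument via Lemma~\ref{lem:fwdD1} plus the level-$(k-1)$ hypothesis is fine. But the core of your inductive step has a genuine gap: the intermediate inequality $A(i,j,k)\leq A(i+1,j,k)$ that you reduce to is \emph{false} in general, so no bookkeeping in the secondary induction on $j$ can establish it. Concretely, take $n=3$ with $\fwdfx{1}{1}=0$, $\fwdfx{2}{1}=\fwdfx{3}{1}=10$, $\fwdfx{1}{2}=0$, and all other entries $0$; this is weakly increasing in $i$. Then $\fwdn{1}{1}{1}=0$ forces $\fwdn{2}{1}{1}=\min\{\fwdfx{1}{2},\,\fwdfx{2}{1}-\fwdfx{1}{1}\}=0$, whence
\begin{equation*}
A(2,1,1)=\fwdn{2}{1}{0}-\fwdn{1}{1}{0}+\fwdn{1}{1}{1}=10,
\qquad
A(3,1,1)=\fwdn{3}{1}{0}-\fwdn{2}{1}{0}+\fwdn{2}{1}{1}=0.
\end{equation*}
The conclusion you ultimately need, $\fwdn{2}{1}{1}\leq A(3,1,1)$, still holds here (both sides are $0$), but not for your reason. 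The same wrong-sign obstruction that you flag in your last paragraph also blocks your $i=1$ case, where you need the second argument $A(2,j,k)\geq 0$ but the subtracted partial sum $\sum_{x=1}^{j-1}\fwdn{2}{x}{k}$ enters unfavorably; you name the obstacle but do not overcome it.

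The paper's proof circumvents this with a telescoping step you are missing: instead of comparing the second min-arguments across the index $i$, it uses the definition of $\fwdn{i+1}{j-1}{k+1}$ at the \emph{same} level $k+1$ and one smaller $j$. Since that entry is a minimum, it is bounded above by its own long second argument; feeding this bound into $A(i+1,j,k+1)$ collapses all the partial sums and yields $A(i+1,j,k+1)\geq \fwdn{i+1}{j}{k}-\fwdn{i}{j}{k}+\fwdn{i}{j}{k+1}$. Hence $\fwdn{i+1}{j}{k+1}\geq\min\{\fwdn{i}{j+1}{k},\ \fwdn{i+1}{j}{k}-\fwdn{i}{j}{k}+\fwdn{i}{j}{k+1}\}$, and one application of the level-$k$ hypothesis together with Lemma~\ref{lem:fwdD1} finishes both the $i>1$ and $i=1$ cases. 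Note also that the statement you actually need, $\fwdn{i}{j}{k}\leq A(i+1,j,k)$, is equivalent to $\alen{i}{j}{k}\leq\alen{i+1}{j}{k}$, which is Lemma~\ref{lem:ls9an} --- but the paper derives that from the present lemma, so you cannot invoke it here without circularity.
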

\begin{proof}
Both the  inequality and the claim of non-negativity are true for the $k=0$ case, by our assumption on~$\fwdfxv$.
Let us take $0 \leq \fwdn{i}{j}{k} \leq \fwdn{i+1}{j}{k}$, for all possible $i$ and $j$ under a fixed~$k$, to be our induction hypothesis and work to show $0 \leq \fwdn{i}{j}{k+1} \leq \fwdn{i+1}{j}{k+1}$.

When $j\neq 1$, by the definition of $\fwdn{i+1}{j-1}{k+1}$, we have
\begin{equation*}
  \textstyle
  \sum_{x=1}^{j-1} \fwdn{i+1}{x}{k  } 
 -\sum_{x=1}^{j-2} \fwdn{i+1}{x}{k+1}
 -\sum_{x=1}^{j-1} \fwdn{i  }{x}{k  }
 +\sum_{x=1}^{j-1} \fwdn{i  }{x}{k+1}
\geq \fwdn{i+1}{j-1}{k+1},
\end{equation*}
so that
\begin{align*}
\fwdn{i+1}{j}{k+1}
&= \min\left\{
   \fwdn{i}{j+1}{k},
   \textstyle
     \sum_{x=1}^{j  } \fwdn{i+1}{x}{k  } 
    -\sum_{x=1}^{j-1} \fwdn{i+1}{x}{k+1}
    -\sum_{x=1}^{j  } \fwdn{i  }{x}{k  }
    +\sum_{x=1}^{j  } \fwdn{i  }{x}{k+1}
   \right\}\\
&\geq \min\left\{
   \fwdn{i}{j+1}{k},
   \fwdn{i+1}{j  }{k  } -\fwdn{i  }{j  }{k  } +\fwdn{i  }{j  }{k+1}
   \right\},
\end{align*}
and the same inequality for the $j=1$ case,
\begin{align*}
\fwdn{i+1}{1}{k+1}
\geq \min\left\{
     \fwdn{i}{2}{k},
     \fwdn{i+1}{1}{k}-\fwdn{i}{1}{k} +\fwdn{i}{1}{k+1}
     \right\},
\end{align*}
is trivially true by the definition of~$\fwdn{i+1}{1}{k+1}$.

Now, when $i>1$, we can apply our induction hypothesis to both terms within the right-hand side $\min\{\ ,\ \}$-expression to obtain
\begin{equation*}
\fwdn{i+1}{j}{k+1}
\geq \min\left\{ \fwdn{i-1}{j+1}{k}, \fwdn{i  }{j  }{k+1} \right\}
= \fwdn{i  }{j  }{k+1},
\end{equation*}
where the final equality is a consequence of Lemma~\ref{lem:fwdD1}.
For the remaining $i=1$ case, since $\fwdn{1}{j}{k+1}$ is defined to be zero, we have
\begin{equation*}
\fwdn{2}{j}{k+1}
\geq \min\left\{
         \fwdn{1}{j+1}{k},
         \fwdn{2}{j}{k} -\fwdn{1}{j}{k} +\fwdn{1}{j}{k+1}
         \right\}
= \min\left\{ \fwdn{1}{j+1}{k}, \fwdn{2}{j}{k} -\fwdn{1}{j}{k} \right\},
\end{equation*}
and since our induction hypothesis states that both terms within the $\min\{\ ,\ \}$-expression are non-negative, we have
\begin{equation*}
\fwdn{2}{j}{k+1} \geq 0 = \fwdn{1}{j}{k+1}.
\end{equation*}
This completes the induction step.
\end{proof}

\begin{lemma}
\label{lem:fwdE}
We have $\fwdn{i}{j}{k} \leq \fwdn{i}{j+1}{k-1}$, for every meaningful choice of the indices.
\end{lemma}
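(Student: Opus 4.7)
The plan is very short: we expect the inequality to follow immediately by chaining together the two previous lemmas. Concretely, for indices with $i \geq 2$, Lemma~\ref{lem:fwdD1} gives
\begin{equation*}
\fwdn{i}{j}{k} \leq \fwdn{i-1}{j+1}{k-1},
\end{equation*}
while Lemma~\ref{lem:fwdD2}, applied with its index $i$ replaced by $i-1$ and its indices $j$ and $k$ replaced by $j+1$ and $k-1$, respectively, gives
\begin{equation*}
\fwdn{i-1}{j+1}{k-1} \leq \fwdn{i}{j+1}{k-1}.
\end{equation*}
Concatenating these two inequalities produces the claim. I will need to check only that the new index triples $(i-1,j+1,k-1)$ and $(i,j+1,k-1)$ still satisfy the range constraint~\eqref{eq:ijkrange}, which follows at once from the assumption $i+j+k \leq n+2$ on the original indices.

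The only remaining case is the boundary situation $i=1$, where Lemma~\ref{lem:fwdD1} is no longer applicable. Here I plan to appeal directly to the generating algorithm: the assignment $\fwdn{1}{j}{k} \leftarrow 0$ performed for every $k \geq 1$ in Algorithm~\ref{alg:1} forces the left-hand side of the desired inequality to equal zero, while Lemma~\ref{lem:fwdD2} guarantees that $\fwdn{1}{j+1}{k-1} \geq 0$. The inequality $0 \leq \fwdn{1}{j+1}{k-1}$ is therefore immediate, and it holds whether $k=1$ (so the right-hand side equals $\fwdfx{1}{j+1}$) or $k \geq 2$ (so the right-hand side also equals zero).

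No genuine obstacle is anticipated here. The content of the lemma is essentially that the two monotonicity properties already established, one diagonal and one vertical, combine to give the corresponding horizontal shift; the only thing to be careful about is the degenerate case $i=1$, where Lemma~\ref{lem:fwdD1} does not apply and must be replaced by the initialization step of the algorithm together with the non-negativity part of Lemma~\ref{lem:fwdD2}.
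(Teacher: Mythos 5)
Your proposal is correct and follows essentially the same route as the paper: the case $i\neq 1$ is handled by chaining Lemma~\ref{lem:fwdD1} with Lemma~\ref{lem:fwdD2}, and the boundary case $i=1$ is handled by the initialization $\fwdn{1}{j}{k}=0$ for $k>0$ together with the non-negativity from Lemma~\ref{lem:fwdD2}. No issues.
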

\begin{proof}
Since the right-hand side is non-negative by Lemma~\ref{lem:fwdD2}, this is trivially true for $i=1$ by the definition $\fwdn{1}{j}{k} = 0$, given for $k > 0$.
For $i\neq 1$, the claim can be obtained by combining the inequalities of Lemma~\ref{lem:fwdD1} and Lemma~\ref{lem:fwdD2}.
\end{proof}

\begin{lemma}\label{lem:fwdC}
We have $\fwdn{i}{j}{k} = 0$, for every meaningful choice of the indices such that $i \leq k$.
\end{lemma}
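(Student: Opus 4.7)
The plan is to iterate the inequality of Lemma~\ref{lem:fwdD1} and then fall back on the base-case definition $\fwdn{1}{j}{k} = 0$ that was built into Algorithm~\ref{alg:1} whenever $k \geq 1$. Concretely, I would start from $\fwdn{i}{j}{k}$ and apply Lemma~\ref{lem:fwdD1} a total of $i-1$ times to obtain the chain
\begin{equation*}
\fwdn{i}{j}{k} \leq \fwdn{i-1}{j+1}{k-1} \leq \fwdn{i-2}{j+2}{k-2} \leq \cdots \leq \fwdn{1}{j+i-1}{k-i+1}.
\end{equation*}
The hypothesis $i \leq k$ guarantees that $k - i + 1 \geq 1$, so the final superscript is still strictly positive, which means the last term falls under the case $\fwdn{1}{*}{*}$ with positive top index; this is set to $0$ in the inner loop of Algorithm~\ref{alg:1}.

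I would then invoke Lemma~\ref{lem:fwdD2}, which provides the matching lower bound $\fwdn{i}{j}{k} \geq 0$, and combine the two inequalities to conclude $\fwdn{i}{j}{k} = 0$.

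The only point to check with a little care is that every intermediate term $\fwdn{i-t}{j+t}{k-t}$ in the chain has \emph{meaningful} indices, i.e.\ its indices satisfy range~\eqref{eq:ijkrange}. Since $\fwdn{i}{j}{k}$ itself is assumed meaningful, we have $i + j + k \leq n + 2$, and this sum is preserved along the chain; also each of $i-t$, $j+t$, $k-t$ stays $\geq 1$ for $0 \leq t \leq i-1$ precisely because $i \leq k$. So Lemma~\ref{lem:fwdD1} is legitimately applicable at every step, and there is no hidden obstacle — the argument is a short telescoping application of already-proved monotonicity, with no case analysis or new induction required.
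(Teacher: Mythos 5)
Your proof is correct and is essentially the paper's argument in unrolled form: the paper packages the same descent as an induction on $k$ whose step is exactly one application of the inequality $\fwdn{i}{j}{k} \leq \fwdn{i-1}{j+1}{k-1}$ of Lemma~\ref{lem:fwdD1} followed by the non-negativity from Lemma~\ref{lem:fwdD2}, bottoming out at the definitional vanishing of $\fwdn{1}{j}{k}$ for $k\geq 1$. One harmless slip: the index sum is not preserved along your chain but drops by one at each step (from $i+j+k$ to $i+j+k-t$), which only helps, so every intermediate term still lies within the range~\eqref{eq:ijkrange} and the argument stands.
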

\begin{proof}
The statement is vacuous for $k=0$ and trivially true for $k=1$.
We take our induction hypothesis to be that $\fwdn{i}{j}{k} = 0$, for $1 \leq i \leq k$, and set our goal to showing $\fwdn{i}{j}{k+1} = 0$, for $1 \leq i \leq k+1$.

The $i=1$ case $\fwdn{1}{j}{k+1}$ is zero by definition.
For $1 < i \leq k+1$, we have
\begin{equation*}
\fwdn{i}{j}{k+1} =
\min\left\{
  \textstyle
  \fwdn{i-1}{j+1}{k},
  \sum_{x=1}^{j  } \fwdn{i}{x}{k} 
 -\sum_{x=1}^{j-1} \fwdn{i}{x}{k+1}
 -\sum_{x=1}^{j  } \fwdn{i-1}{x}{k}
 +\sum_{x=1}^{j  } \fwdn{i-1}{x}{k+1}
\right\},
\end{equation*}
and the first term $\fwdn{i-1}{j+1}{k}$ appearing on the right-hand side must be zero by our induction hypothesis.
Since Lemma~\ref{lem:fwdD2} states that the left-hand side is non-negative, the minimum itself must be zero.
This completes the induction step.
\end{proof}

Let us now introduce the notation
\begin{equation}
\label{eq:ds07n}
\alen{i}{j}{k} = \sum_{x=1}^{j} \fwdn{i}{x}{k-1} - \sum_{x=1}^{j-1} \fwdn{i}{x}{k},
\end{equation}
for indices $i$, $j$, and~$k$ in the range~\eqref{eq:ijkrange}.
To make some of our later formula manipulations more uniform, we additionally define
\begin{equation}
\alen{i}{0}{k} = 0.
\end{equation}
Note that we are introducing $\alen{i}{j}{k}$ as a shorthand notation for a certain formula, and its interpretation as a certain length will come later.

The two properties of the formal symbols $\alen{*}{*}{*}$ we present below are easy to obtain directly, but further inequalities concerning $\alen{*}{*}{*}$ will only be obtained as part of a lengthy induction argument to be given later in this section.

\begin{lemma}\label{lem:di3sg}
We have $\alen{i}{j}{k} \leq \alen{i}{j+1}{k}$, for every meaningful choice of the indices.
In particular, each $\alen{i}{j}{k}$ is a non-negative integer.
\end{lemma}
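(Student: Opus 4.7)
The plan is to reduce this inequality directly to Lemma~\ref{lem:fwdE}, which has already been established. The key observation is that the definition~\eqref{eq:ds07n} of $\alen{i}{j}{k}$ is telescoping in~$j$: subtracting the expressions for successive values of~$j$ collapses the two sums down to a single difference.

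Concretely, I would first compute
\begin{equation*}
\alen{i}{j+1}{k} - \alen{i}{j}{k}
= \left(\sum_{x=1}^{j+1} \fwdn{i}{x}{k-1} - \sum_{x=1}^{j} \fwdn{i}{x}{k}\right)
- \left(\sum_{x=1}^{j} \fwdn{i}{x}{k-1} - \sum_{x=1}^{j-1} \fwdn{i}{x}{k}\right)
= \fwdn{i}{j+1}{k-1} - \fwdn{i}{j}{k},
\end{equation*}
and then invoke Lemma~\ref{lem:fwdE} to conclude that this difference is non-negative, which gives the desired monotonicity. One should briefly verify that the indices $i,j+1,k$ appearing here remain within the admissible range~\eqref{eq:ijkrange} whenever $\alen{i}{j+1}{k}$ is meaningful, so that Lemma~\ref{lem:fwdE} genuinely applies.

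For the non-negativity claim, I would iterate the just-proved inequality starting from the base value $\alen{i}{0}{k} = 0$ supplied by the auxiliary definition immediately preceding the lemma. Chaining
\begin{equation*}
0 = \alen{i}{0}{k} \leq \alen{i}{1}{k} \leq \alen{i}{2}{k} \leq \cdots \leq \alen{i}{j}{k}
\end{equation*}
then yields $\alen{i}{j}{k} \geq 0$ for every admissible triple $(i,j,k)$. Integrality is automatic, since each $\alen{i}{j}{k}$ is by construction an integer combination of the integer-valued quantities $\fwdn{i}{x}{k-1}$ and $\fwdn{i}{x}{k}$.

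I do not anticipate any real obstacle here: the entire argument is a single telescoping identity followed by an appeal to Lemma~\ref{lem:fwdE}. The only point that needs mild care is the bookkeeping of index ranges, and the explicit inclusion of the convention $\alen{i}{0}{k} = 0$ to anchor the induction.
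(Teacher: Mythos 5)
Your proposal is correct and matches the paper's own argument, which likewise telescopes~\eqref{eq:ds07n} to write $\alen{i}{j}{k} = \fwdn{i}{1}{k-1} + \sum_{x=1}^{j-1}\bigl(\fwdn{i}{x+1}{k-1} - \fwdn{i}{x}{k}\bigr)$ and then applies Lemma~\ref{lem:fwdE}. The only detail worth adding is that the first link of your chain, $\alen{i}{0}{k} \leq \alen{i}{1}{k} = \fwdn{i}{1}{k-1}$, falls outside the index range of Lemma~\ref{lem:fwdE} and instead needs the non-negativity from Lemma~\ref{lem:fwdD2}, which is exactly why the paper cites both lemmas.
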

This is an immediate consequence of Lemma~\ref{lem:fwdD2}, Lemma~\ref{lem:fwdE}, and the observation
\begin{equation}
\alen{i}{j}{k}
= \fwdn{i}{1}{k-1}
 +\sum_{x=1}^{j-1} \Big(\fwdn{i}{x+1}{k-1} -\fwdn{i}{x}{k}\Big).
\end{equation}

\begin{lemma}
\label{lem:ls9an}
We have $\alen{i}{j}{k} \leq \alen{i+1}{j}{k}$, for every meaningful choice of the indices.
\end{lemma}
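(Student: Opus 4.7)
The plan is to deduce this inequality directly from the recursive definition of $\fwdn{i+1}{j}{k}$ together with the already-established Lemma~\ref{lem:fwdD2}, without any further induction. The key observation is that the second argument inside the $\min\{\,,\,\}$-expression that defines $\fwdn{i+1}{j}{k}$ can be rewritten cleanly in terms of the symbols $\alen{*}{*}{*}$.

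More precisely, substituting $i \mapsto i+1$ in the defining formula of Algorithm~\ref{alg:1} gives
\begin{equation*}
\fwdn{i+1}{j}{k}
= \min\left\{
  \fwdn{i}{j+1}{k-1},\;
  \textstyle
  \sum_{x=1}^{j  } \fwdn{i+1}{x}{k-1}
 -\sum_{x=1}^{j-1} \fwdn{i+1}{x}{k}
 -\sum_{x=1}^{j  } \fwdn{i  }{x}{k-1}
 +\sum_{x=1}^{j  } \fwdn{i  }{x}{k}
\right\},
\end{equation*}
and by splitting off the $x=j$ term of the last sum and recognizing the two differences of sums as $\alen{i+1}{j}{k}$ and $-\alen{i}{j}{k}$, one sees that the second argument of the min is exactly $\alen{i+1}{j}{k} - \alen{i}{j}{k} + \fwdn{i}{j}{k}$. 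First I would verify this algebraic identity, and then use that $\fwdn{i+1}{j}{k}$ is bounded above by the second argument to conclude
\begin{equation*}
\fwdn{i+1}{j}{k} \;\leq\; \alen{i+1}{j}{k} - \alen{i}{j}{k} + \fwdn{i}{j}{k}.
\end{equation*}
Rearranging gives $\alen{i+1}{j}{k} - \alen{i}{j}{k} \geq \fwdn{i+1}{j}{k} - \fwdn{i}{j}{k}$, and the right-hand side is non-negative by Lemma~\ref{lem:fwdD2}, which finishes the proof.

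There is no genuine obstacle here; the only mildly delicate point is the bookkeeping needed to see that the second argument of the min collapses to $\alen{i+1}{j}{k} - \alen{i}{j}{k} + \fwdn{i}{j}{k}$. The range of indices also needs a brief check: the statement concerns $i \geq 1$, so $i+1 \geq 2$, which means the min-formula applies (rather than the boundary case $\fwdn{1}{j}{k} = 0$), and the range constraint $(i+1)+j+k \leq n+2$ is exactly the ``meaningful'' range of the claim.
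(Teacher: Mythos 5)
Your proposal is correct and is essentially the paper's own argument: the paper likewise rewrites the definition of $\fwdn{i+1}{j}{k}$ so that the second argument of the $\min$ is recognized as $\alen{i+1}{j}{k}-\alen{i}{j}{k}+\fwdn{i}{j}{k}$ (equivalently, it pulls $\fwdn{i}{j}{k}$ out of the $\min$), deduces $\fwdn{i+1}{j}{k}-\fwdn{i}{j}{k}\leq\alen{i+1}{j}{k}-\alen{i}{j}{k}$, and concludes via the non-negativity of the left-hand side from Lemma~\ref{lem:fwdD2}. No substantive difference.
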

\begin{proof}
The definition of $\fwdn{i+1}{j}{k}$ may be rewritten in the form
\begin{align*}
\fwdn{i+1}{j}{k}
&= \fwdn{i}{j}{k}
 +\min\left\{\textstyle
      \fwdn{i}{j+1}{k-1} -\fwdn{i}{j}{k},  
      \sum_{x=1}^{j  } \fwdn{i+1}{x}{k-1} 
     -\sum_{x=1}^{j-1} \fwdn{i+1}{x}{k  }
     -\sum_{x=1}^{j  } \fwdn{i  }{x}{k-1}
     +\sum_{x=1}^{j-1} \fwdn{i  }{x}{k  }
  \right\}\\
&= \fwdn{i-1}{j}{k}
 +\min\left\{\textstyle
      \fwdn{i}{j+1}{k-1} -\fwdn{i}{j}{k},  
      \alen{i+1}{j}{k} -\alen{i}{j}{k}
  \right\},
\end{align*}
so that
\begin{equation*}
\textstyle
\fwdn{i+1}{j}{k} -\fwdn{i}{j}{k}
\leq
\alen{i+1}{j}{k} -\alen{i}{j}{k}.
\end{equation*}
The non-negativity of the left-hand side follows from Lemma~\ref{lem:fwdD2} and implies the claim.
\end{proof}

We will now work with the following object, which specifies much more detail than just the lengths of rows.

\begin{definition}
\label{def:rc}
For each $1 \leq v\leq n$ and $1\leq u\leq n-v+1$, we define $rc_{u,v}$ to be the rigged configuration with the following properties:
\begin{enumerate}
\item
\label{rcdef1}
The height of the $i$-th rigged partition is at most as follows:
\begin{alignat*}{2}
&v     \ & &\text{for}\ 1\leq i \leq u,\\
&v-1   \ & &\text{for}\ u+1 \leq i \leq n-v+1,\\
&n-i+1 \ & &\text{for}\ n-v+2 \leq i \leq n.
\end{alignat*}
\item
\label{rcdef2}
The length of the $k$-th row of the $i$-th rigged partition is as follows:
\begin{alignat*}{2}
&\alen{i}{v-k+1}{k}   \ & &\text{for}\ 1\leq i \leq u,\\
&\alen{i}{v-k}{k}     \ & &\text{for}\ u+1 \leq i \leq n-v+1,\\
&\alen{i}{n-i-k+2}{k} \ & &\text{for}\ n-v+2 \leq i \leq n.
\end{alignat*}
\item
\label{rcdef3}
If $u< n-v+1$, the rigging of the $k$-th row of the $i$-th rigged partition is as follows:
\begin{alignat*}{4}
 &0                      \ & &\text{for}\ 1\leq i < u, \qquad&
-&\fwdn{i}{v-k+1}{k-1}   \ & &\text{for}\ i=u,\\
 &\fwdn{i}{v-k}{k}       \ & &\text{for}\ i=u+1, \qquad&
 &0                      \ & &\text{for}\ u+1 < i \leq n-v+1,\\
-&\fwdn{i}{n-i-k+2}{k-1} \ & &\text{for}\ i= n-v+2, \qquad&
-&\fwdn{i}{n-i-k+2}{k-1} +\fwdn{i}{n-i-k+2}{k}
                         \ & &\text{for}\ n-v+2 < i \leq n.
\end{alignat*}
\item
\label{rcdef4}
If $u= n-v+1$, the rigging of the $k$-th row of the $i$-th rigged partition is as follows:
\begin{gather*}
 0\qquad\text{for}\ 1\leq i < u, \qquad\qquad\qquad
-\fwdn{i}{n-i-k+2}{k-1}\qquad\text{for}\ i=u,\\
-\fwdn{i}{n-i-k+2}{k-1} +\fwdn{i}{n-i-k+2}{k}
                         \quad\text{for}\ u+1 = n-v+2 \leq i \leq n.
\end{gather*}
\end{enumerate}
For convenience of notation, we set $rc_{u,0} = \RCinfv$.
\end{definition}

The range of~$k$ appearing in Item-\ref{rcdef2} of this definition is to be restricted to the height specified by Item-\ref{rcdef1}.
This automatically makes the three $j$-position indices appearing in Item-\ref{rcdef2} positive and brings all the indices of $\alen{*}{*}{*}$ to within the range specified by~\eqref{eq:ijkrange}.
The range of~$k$ appearing in Item-\ref{rcdef3} and Item-\ref{rcdef4} must also be restricted likewise to reflect the maximum heights, and this restriction ensures that all the $\fwdn{*}{*}{*}$ terms appearing in the two items are meaningful.
Since a rigged configuration attaches riggings only to non-empty rows, the riggings specified by Item-\ref{rcdef3} and Item-\ref{rcdef4} are meaningless when the corresponding rows are of length zero.
However, to make our formulas more uniform, we will include the (fake) riggings corresponding to the possibly empty rows in our computations.

At this point, we cannot claim that the description given by Definition~\ref{def:rc} properly defines a rigged configuration.
For example, it is not even clear if the set of $\alen{*}{*}{*}$ values suggested for any one rigged partition are actually appropriate as row lengths of a partition, i.e., whether they satisfy a certain weakly decreasing property.
However, we will inductively show that the result of partially computing $\bftil^{\,\fwdfxv} \RCinfv$ up to the point
\begin{equation}
\left(
  \ftil_{u}^{\,\fwdfx{u}{v}}
  \cdots
  \ftil_{2}^{\,\fwdfx{2}{v}}
  \ftil_{1}^{\,\fwdfx{1}{v}}\right)
  \bftil^{\,\fwdfx{*}{v-1}}
  \cdots
  \bftil^{\,\fwdfx{*}{2}}
  \bftil^{\,\fwdfx{*}{1}} \RCinfv
\end{equation}
must be as described by Definition~\ref{def:rc}, so that the existence of a rigged configuration matching the above description is automatically guaranteed.

Substituting $(u,v) = (1,1)$ into Definition~\ref{def:rc}, we find that $rc_{1,1}$ is defined to be the rigged configuration of the following specifications.
\begin{enumerate}
\item The first rigged partition consists of at most one row, and all the other parts are empty rigged partitions.
\item The length of the first row of the first rigged partition is $\alen{1}{1}{1} = \fwdn{1}{1}{0} = \fwdfx{1}{1}$.
\item The rigging of the first row of the first rigged partition is $-\fwdn{1}{1}{0} = -\fwdfx{1}{1}$, (assuming the row is non-empty).
\end{enumerate}
With the experience gained in the previous section through the $A_4$-type example, we know that $\ftil_{1}^{\,\fwdfx{1}{1}}\RCinfv$ is precisely the rigged configuration described above, even in the general $A_n$-type case.
This provides us with the base case of our induction argument.

We now fix integers $p$ and $q$ such that $1\leq q\leq n$ and $1\leq p\leq n-q+1$.
Our induction hypothesis will be the following.

\medskip
\noindent
\begin{boxedminipage}{\linewidth}
\textbf{Induction Hypothesis}:
For indices $u$ and $v$ such that
\begin{equation}
\label{eq:range}
\begin{aligned}
&\text{(a)~$1\leq v< q$ and $1\leq u \leq n-v+1$}\quad\text{and}\\
&\text{(b)~$v=q$ and $1\leq u < p$}
\end{aligned}
\end{equation}
the description of $rc_{u,v}$ given in Definition~\ref{def:rc} does correspond to a true rigged configuration and
\begin{equation}
\label{eq:indhyp}
\left(
  \ftil_{u}^{\,\fwdfx{u}{v}}
  \cdots
  \ftil_{2}^{\,\fwdfx{2}{v}}
  \ftil_{1}^{\,\fwdfx{1}{v}}\right)
  \bftil^{\,\fwdfx{*}{v-1}}
  \cdots
  \bftil^{\,\fwdfx{*}{2}}
  \bftil^{\,\fwdfx{*}{1}} \RCinfv
= rc_{u,v}.
\end{equation}
\end{boxedminipage}
\smallskip

\noindent
The base case that has been verified is the $(p,q) = (2,1)$ case of this Induction Hypothesis, and the goal of our induction step will be to show the following.

\medskip
\noindent
\begin{boxedminipage}{\linewidth}
\textbf{Goal of Induction Step}:
If we set
\begin{equation*}
rc_- = \begin{cases}
rc_{p-1,q}
 = \left(\ftil_{p-1}^{\,\fwdfx{p-1}{q}} \cdots \ftil_{1}^{\,\fwdfx{1}{q}}\right)
   \bftil^{\,\fwdfx{*}{q-1}} \cdots \bftil^{\,\fwdfx{*}{1}} \RCinfv,
 & \text{for $p\neq 1$,}\\
rc_{n-q+2,q-1}
 = \bftil^{\,\fwdfx{*}{q-1}} \cdots \bftil^{\,\fwdfx{*}{1}} \RCinfv,
 & \text{for $p=1$,}
\end{cases}
\end{equation*}
then $\ftil_p^{\,\fwdfx{p}{q}} rc_- = rc_{p,q}$.
\end{boxedminipage}
\smallskip

\noindent
A description of the rigged configuration $\bftil^{\,\fwdfxv} \RCinfv = rc_{1,n}$ will come as an immediate consequence of this induction argument.

By agreeing to the Induction Hypothesis, one is accepting that the non-negative integer $\alen{u}{v-k+1}{k}$, computed according to the formula~\eqref{eq:ds07n}, is the length of the $k$-th row of the $u$-th part of the rigged configuration~\eqref{eq:indhyp}, for each $u$ and~$v$ satisfying~\eqref{eq:range} and $1\leq k\leq v$.
Since the only $\ftil_u$ operators contained in $\bftil^{\,\fwdfx{*}{v}}$ already appear in~\eqref{eq:indhyp}, the non-negative integer $\alen{u}{v-k+1}{k}$ is also the length of the $k$-th row of the $u$-th part from $\bftil^{\,\fwdfx{*}{v}}\cdots \bftil^{\,\fwdfx{*}{1}} \RCinfv$.
Hence, one consequence of our induction argument will be that the following claim is valid for all meaningful choice of the indices (such that $j\neq 0$).

\begin{claim}\label{claim}
The non-negative integer $\alen{x}{y}{z}$ is the length of the $z$-th row of the $x$-th part of the rigged configuration
$\bftil^{\,\fwdfx{*}{y+z-1}}
 \cdots
 \bftil^{\,\fwdfx{*}{2}}
 \bftil^{\,\fwdfx{*}{1}} \RCinfv$.
\end{claim}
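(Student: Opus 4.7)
The claim is a direct corollary of the Induction Hypothesis, once that hypothesis has been established for every admissible~$(u,v)$. My plan is therefore to extract the stated row length from the snapshot of the computation in which the indices $y$ and $z$ of $\alen{x}{y}{z}$ appear most transparently, namely the rigged configuration obtained immediately after the $v$-th wave $\bftil^{\,\fwdfx{*}{v}}$ has been completed, with $v = y+z-1$.

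More concretely, I would set $v = y+z-1$ and $u = n-v+1$, which is the maximal value of~$u$ compatible with the $v$-th wave. Under the Induction Hypothesis, the rigged configuration $\bftil^{\,\fwdfx{*}{y+z-1}}\cdots \bftil^{\,\fwdfx{*}{1}} \RCinfv$ coincides with $rc_{n-v+1,v}$, since the product $\ftil_u^{\,\fwdfx{u}{v}}\cdots \ftil_1^{\,\fwdfx{1}{v}}$ collapses exactly to $\bftil^{\,\fwdfx{*}{v}}$ when $u=n-v+1$. Item-\ref{rcdef2} of Definition~\ref{def:rc} prescribes that, for $1\leq i \leq u$, the $k$-th row of the $i$-th rigged partition of~$rc_{u,v}$ has length $\alen{i}{v-k+1}{k}$. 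Substituting $(i,k)=(x,z)$ and using $v-z+1 = y$, this row length is precisely~$\alen{x}{y}{z}$, as claimed.

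It remains to verify that the pair $(x,z)$ really falls within the first case of Item-\ref{rcdef2}. The range condition~\eqref{eq:ijkrange} forces $x+y+z\leq n+2$, whence $x \leq n-y-z+2 = n-v+1 = u$, placing~$x$ in the required range $1\leq x\leq u$. The hypothesis $y\geq 1$ further gives $z\leq y+z-1 = v$, which matches the height bound imposed by Item-\ref{rcdef1} for this range of~$x$. Hence the $z$-th row of the $x$-th rigged partition is indeed present in~$rc_{n-v+1,v}$ and has length~$\alen{x}{y}{z}$, whose non-negativity is already guaranteed by Lemma~\ref{lem:di3sg}.

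The only real obstacle is the Induction Hypothesis itself, whose verification is the long inductive computation announced as the ``Goal of Induction Step'' and forming the main body of this section. Once that induction has been carried through, the Claim reduces to selecting the correct intermediate rigged configuration and matching indices against Definition~\ref{def:rc}; no additional computation or inequality is needed, so the Claim will be recorded as a free byproduct of the main induction rather than requiring a separate proof.
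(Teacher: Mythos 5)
Your proposal is correct and follows essentially the same route as the paper: the Claim is read off directly from the completed Induction Hypothesis together with Item~2 of Definition~\ref{def:rc}, with the index check $x\leq n-v+1$ coming from~\eqref{eq:ijkrange}. The only cosmetic difference is that the paper takes the snapshot $rc_{x,v}$ and notes that the remaining operators of the $v$-th wave contain no further $\ftil_x$'s (so the $x$-th part's row lengths are unchanged), whereas you jump straight to the end-of-wave configuration $rc_{n-v+1,v}$; both reductions are immediate.
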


Before working on the induction step itself, we need to supplement our existing list of inequalities involving $\fwdn{*}{*}{*}$ and $\alen{*}{*}{*}$ with even more inequalities involving $\alen{*}{*}{*}$.
Unlike our previous lemmas, the inequalities to be given below will be valid only for a certain set of indices dictated by the Induction Hypothesis, until we are done with the induction argument.

\begin{lemma}
\label{lem:cr4s1}
If the interpretations of $\alen{i}{j}{k+1}$ and $\alen{i}{j}{k}$ given by Claim~\ref{claim} are correct, then $\alen{i}{j}{k+1} \leq \alen{i}{j}{k}$.
\end{lemma}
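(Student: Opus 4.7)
The plan is to leverage the partition property of rigged configurations (weakly decreasing row lengths), using the Claim's hypothesized interpretations to cast both $\alen{i}{j}{k}$ and $\alen{i}{j}{k+1}$ as concrete row lengths in specific rigged configurations along the $\bftil$-sequence. By Claim~\ref{claim}, $\alen{i}{j}{k}$ is the $k$-th row length of the $i$-th rigged partition of $\bftil^{\,\fwdfx{*}{j+k-1}} \cdots \bftil^{\,\fwdfx{*}{1}} \RCinfv$, while $\alen{i}{j}{k+1}$ is the $(k+1)$-th row length of the $i$-th rigged partition of $\bftil^{\,\fwdfx{*}{j+k}} \cdots \bftil^{\,\fwdfx{*}{1}} \RCinfv$.

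Since these two configurations differ only by the wave $\bftil^{\,\fwdfx{*}{j+k}}$, and since within this wave only the sub-wave $\ftil_i^{\,\fwdfx{i}{j+k}}$ can modify the \emph{shape} of the $i$-th rigged partition (the actions $\ftil_\ell$ with $\ell\neq i$ only perturb riggings and never add boxes to the $i$-th partition), the shape of the $i$-th partition just before this sub-wave coincides with its shape in the earlier configuration. The partition property there lets the $(k+1)$-th row entering the sub-wave be bounded by $\alen{i}{j-1}{k+1}\le\alen{i}{j}{k}$, a further instance of Claim~\ref{claim}. The desired inequality therefore reduces to showing that the sub-wave $\ftil_i^{\,\fwdfx{i}{j+k}}$ cannot push the $(k+1)$-th row beyond this pre-sub-wave upper bound $\alen{i}{j}{k}$.

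To control the box distribution through $\ftil_i^{\,\fwdfx{i}{j+k}}$, I would use the Induction Hypothesis via Definition~\ref{def:rc} to read off the riggings of every row of the $i$-th partition as the sub-wave begins, and then invoke the Kashiwara rule of adding each new box to the longest row with smallest non-positive rigging. The overall growth of the $(k+1)$-th row across the sub-wave equals $\fwdn{i}{j}{k}-\fwdn{i}{j-1}{k+1}$, so the target reduces to verifying that this growth does not exceed the initial slack $\alen{i}{j}{k}-\alen{i}{j-1}{k+1}$ left above the $(k+1)$-th row at the start of the sub-wave.

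The main obstacle I foresee is precisely this box-distribution analysis. Naively applying the partition property in the later configuration only yields $\alen{i}{j}{k+1}\le\alen{i}{j+1}{k}$, which is too weak since Lemma~\ref{lem:di3sg} allows $\alen{i}{j+1}{k}$ to strictly exceed $\alen{i}{j}{k}$. Sharpening the bound will rely on the $\min\{\cdot,\cdot\}$ shape of $\fwdn{i}{j}{k}$ in Algorithm~\ref{alg:1}, tracing which branch of the min is active at each stage and how the row-$k$ rigging decreases step by step; the summand $\alen{i}{j}{k}-\alen{i-1}{j+1}{k}$ sitting inside that $\min$ is what should ultimately deliver the slack needed to close the argument.
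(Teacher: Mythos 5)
Your setup is right: you correctly locate $\alen{i}{j}{k}$ and $\alen{i}{j}{k+1}$ in the configurations before and after the wave $\bftil^{\,\fwdfx{*}{j+k}}$, observe that only the sub-wave $\ftil_i^{\,\fwdfx{i}{j+k}}$ changes the shape of the $i$-th partition, and correctly note that the naive partition bound in the later configuration only gives $\alen{i}{j}{k+1}\leq\alen{i}{j+1}{k}$, which is too weak. But at that point the proof stops: you restate the lemma as the equivalent inequality $\fwdn{i}{j}{k}-\fwdn{i}{j-1}{k+1}\leq\alen{i}{j}{k}-\alen{i}{j-1}{k+1}$ and then defer its verification to an unexecuted ``trace which branch of the min is active'' analysis, which you yourself flag as the main obstacle. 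That reduction is not progress by itself --- it is just the definition of $\alen{i}{j}{k+1}$ unwound --- and the algebraic route you sketch is genuinely delicate, since the relevant branch of the $\min$ defining $\fwdn{i}{j}{k}$ involves $\alen{i}{j}{k}-\alen{i-1}{j}{k}$ rather than the slack $\alen{i}{j}{k}-\alen{i}{j-1}{k+1}$ you need, so it does not close without further work.

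The missing idea, which is the entire content of the paper's proof, is a monotonicity property of consecutive $\ftil_i$ actions: adding a box to a row decrements that row's rigging by $1$, decrements the riggings of all strictly longer (upper) rows by $2$, and leaves shorter (lower) rows untouched, so each successive $\ftil_i$ must land on the same row or a higher one. Hence within the sub-wave $\ftil_i^{\,\fwdfx{i}{j+k}}$ the $(k+1)$-th row receives \emph{all} of its boxes before the $k$-th row receives any. At the moment the $(k+1)$-th row reaches its final length $\alen{i}{j}{k+1}$, the $k$-th row still has its pre-wave length $\alen{i}{j}{k}$, and since the intermediate object is a genuine rigged configuration its rows weakly decrease in length, giving $\alen{i}{j}{k+1}\leq\alen{i}{j}{k}$ with no computation at all. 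Without this (or some completed substitute for the min-branch analysis), your argument does not establish the lemma.
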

\begin{proof}
Recall that one may roughly view an $\ftil_i$ action as the adding of a box to the row of the smallest rigging.
With the addition of a box, the rigging for that row is reduced by one, the riggings of all the longer (upper) rows are reduced by two, and the riggings of all the shorter (lower) rows are not changed.
Hence, if an $\ftil_i$ action adds a box to a certain row, the immediately following $\ftil_i$ action, for the same~$i$, must materialize on either the same row or an upper row.
In other words, a series of consecutive $\ftil_i$ actions, for some fixed index~$i$, will always proceed (weakly) upwards among the rows within the $i$-th part of a rigged configuration.
Note that this argument remains valid even in the absence of non-positive riggings.

Taking for granted that the crystal structure on the rigged configurations is well-defined, the claim states that, during the application of $\ftil_i^{\fwdfx{i}{j+k}}$, the length of the $(k+1)$-th row of the $i$-th part cannot grow past the length of the (upper) $k$-th row, which has yet to receive box additions at that point.
\end{proof}

\begin{lemma}
\label{lem:s2eda}
If the interpretations of $\alen{i}{j}{k+1}$ and $\alen{i}{j+1}{k}$ given by Claim~\ref{claim} are correct, then $\alen{i}{j}{k+1} \leq \alen{i}{j+1}{k}$.
\end{lemma}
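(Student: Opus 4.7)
The plan is to read off both quantities as row lengths of a single common rigged configuration and then invoke the weakly decreasing property of partition row lengths. Substituting the indices into the interpretation supplied by Claim~\ref{claim}, the quantity $\alen{i}{j}{k+1}$ is the length of the $(k+1)$-th row of the $i$-th rigged partition of
\begin{equation*}
\bftil^{\,\fwdfx{*}{j+k}}
 \cdots
 \bftil^{\,\fwdfx{*}{2}}
 \bftil^{\,\fwdfx{*}{1}} \RCinfv,
\end{equation*}
while the quantity $\alen{i}{j+1}{k}$, obtained by the substitution $(x,y,z)=(i,j+1,k)$, is the length of the $k$-th row of the $i$-th rigged partition of the very same rigged configuration, since $(j+1)+k-1 = j+k$.

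Once this identification is made, the proof is essentially immediate. A rigged partition is a Young diagram carrying integer labels on its rows, and a Young diagram has row lengths that are weakly decreasing from top to bottom. Because the $(k+1)$-th row sits directly below the $k$-th row within the same $i$-th rigged partition, its length can be no larger, which is exactly the claimed inequality $\alen{i}{j}{k+1} \leq \alen{i}{j+1}{k}$.

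There is no serious obstacle here: the only subtlety is the bookkeeping of indices, namely checking that the superscripts $j+k$ and $(j+1)+k-1$ coincide so that the two $\alen{*}{*}{*}$ values really are read off from the same rigged configuration. This contrasts with the previous Lemma~\ref{lem:cr4s1}, where the two row lengths being compared belonged to rigged configurations at different stages of the computation and required the upward-drift argument for consecutive $\ftil_i$ actions; the present lemma reduces to a single snapshot and needs only the defining partition shape of a rigged partition.
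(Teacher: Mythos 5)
Your proof is correct and is essentially identical to the paper's own argument: both observe that, since $j+(k+1)-1=(j+1)+k-1=j+k$, the two quantities are the lengths of the $(k+1)$-th and $k$-th rows of the same rigged partition in the same rigged configuration, and then invoke the weakly decreasing row lengths of a partition. The index bookkeeping you spell out is exactly the content the paper leaves implicit.
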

\begin{proof}
The assumption of this claim states that the two given integers are lengths of rows found within the same partition.
As such, the upper row must be at least as long as the lower row.
\end{proof}

\begin{lemma}
\label{lem:dvlke}
If the interpretations of $\alen{i-1}{2}{k}$, \dots, $\alen{i-1}{j}{k}$ and $\alen{i-1}{2}{k-1}$, \dots, $\alen{i-1}{j}{k-1}$ given by Claim~\ref{claim} are correct, then
$\alen{i}{j}{k} \leq \alen{i-1}{j+1}{k-1}$.
\end{lemma}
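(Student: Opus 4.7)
The plan is to rewrite $\alen{i-1}{j+1}{k-1}$ using an algebraic identity derived from the recursion of Algorithm~\ref{alg:1}, in a form that exhibits its difference with $\alen{i}{j}{k}$ as a sum of manifestly non-negative contributions plus one residual term that can be controlled via the interpretation hypothesis.

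First, I would rearrange the defining minimum for $\fwdn{i}{x}{k-1}$ (valid for $i\geq 2$ and $k\geq 2$) in the same spirit as the proof of Lemma~\ref{lem:ls9an}, obtaining the identity
\begin{equation*}
\fwdn{i-1}{x+1}{k-2} \;=\; \fwdn{i}{x}{k-1} + \max\bigl\{0,\; \alen{i-1}{x+1}{k-1} - \alen{i}{x}{k-1}\bigr\}.
\end{equation*}
Summing over $x = 1,\dots,j$ and substituting into the telescoping form $\alen{i-1}{j+1}{k-1} = \fwdn{i-1}{1}{k-2} + \sum_{x=1}^{j}[\fwdn{i-1}{x+1}{k-2} - \fwdn{i-1}{x}{k-1}]$, then subtracting the expansion of $\alen{i}{j}{k}$, the common $\sum_{x=1}^{j}\fwdn{i}{x}{k-1}$ terms cancel, leaving
\begin{equation*}
\alen{i-1}{j+1}{k-1} - \alen{i}{j}{k}
\;=\; \bigl[\alen{i-1}{1}{k-1} - \alen{i-1}{j}{k}\bigr]
+ \sum_{x=1}^{j}\max\{\cdot\}
+ \sum_{x=1}^{j-1}\bigl[\fwdn{i}{x}{k} - \fwdn{i-1}{x}{k}\bigr],
\end{equation*}
after identifying $\fwdn{i-1}{1}{k-2} = \alen{i-1}{1}{k-1}$ and recognizing $\sum_{x=1}^{j}\fwdn{i-1}{x}{k-1} - \sum_{x=1}^{j-1}\fwdn{i-1}{x}{k}$ as $\alen{i-1}{j}{k}$.

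The third sum is non-negative by Lemma~\ref{lem:fwdD2} and the max-sum is trivially non-negative, so the task reduces to establishing $\alen{i-1}{1}{k-1} + \sum_{x=1}^{j}\max\{0,\ \alen{i-1}{x+1}{k-1} - \alen{i}{x}{k-1}\} \geq \alen{i-1}{j}{k}$. Here I would invoke the interpretation hypothesis: the assumption that $\alen{i-1}{y}{k-1}$ and $\alen{i-1}{y}{k}$ are partition row lengths for $y = 2,\dots,j$ furnishes, via the weakly decreasing property of rows within a partition taken at coincident time-steps, the inequalities $\alen{i-1}{y}{k-1} \geq \alen{i-1}{y-1}{k}$, which telescope against the max-terms to bound $\alen{i-1}{j}{k}$ from above by the required quantity.

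The principal obstacle I anticipate is the boundary contribution at $x = j$: the max-term there involves $\alen{i-1}{j+1}{k-1}$, the very quantity being bounded. Avoiding circularity will require either a case split on whether this boundary max-term vanishes (reducing in one branch to a direct comparison of $\alen{i-1}{1}{k-1}$ with $\alen{i-1}{j}{k-1}$ via Lemma~\ref{lem:fwdE} and the non-negativity of row lengths) or a reorganization of the telescope so that the boundary contribution is absorbed by a neighboring piece. Coordinating these several applications of the interpretation hypothesis with the algebraic identities from the first step is where the bulk of the work will lie.
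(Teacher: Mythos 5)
Your opening identity is correct: one can check from Algorithm~\ref{alg:1} that indeed
$\fwdn{i-1}{x+1}{k-2}-\fwdn{i}{x}{k-1}=\max\bigl\{0,\ \alen{i-1}{x+1}{k-1}-\alen{i}{x}{k-1}\bigr\}$,
and the resulting exact decomposition of $\alen{i-1}{j+1}{k-1}-\alen{i}{j}{k}$ into the three pieces you list is right. The gap is in the next sentence: after discarding $\sum_{x=1}^{j-1}[\fwdn{i}{x}{k}-\fwdn{i-1}{x}{k}]$ as merely non-negative, you reduce to the inequality
$\alen{i-1}{1}{k-1}+\sum_{x=1}^{j}\max\{0,\ \alen{i-1}{x+1}{k-1}-\alen{i}{x}{k-1}\}\geq\alen{i-1}{j}{k}$,
and this statement is \emph{false} in general; the discarded sum can be strictly needed. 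Concretely, take $n=5$, $(i,j,k)=(3,2,2)$, and
$(\fwdfx{1}{1},\fwdfx{2}{1},\fwdfx{3}{1})=(0,2,100)$, $(\fwdfx{1}{2},\fwdfx{2}{2},\fwdfx{3}{2})=(2,10,10)$, $(\fwdfx{1}{3},\fwdfx{2}{3})=(10,10)$, with the remaining entries filled in to preserve weak increase. Algorithm~\ref{alg:1} gives $\fwdn{2}{1}{1}=2$, $\fwdn{2}{2}{1}=8$, $\fwdn{3}{1}{1}=10$, $\fwdn{3}{2}{1}=10$, $\fwdn{3}{1}{2}=8$, hence $\alen{2}{1}{1}=2$, $\alen{2}{2}{2}=10$, $\alen{2}{2}{1}=10$, $\alen{3}{1}{1}=100$, $\alen{2}{3}{1}=12$, $\alen{3}{2}{1}=100$: both max-terms vanish and your reduced inequality reads $2\geq 10$. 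The lemma itself survives only because the discarded term $\fwdn{3}{1}{2}-\fwdn{2}{1}{2}=8$ exactly restores the balance ($\alen{2}{3}{1}-\alen{3}{2}{2}=12-12=0$). The same example also shows that your proposed telescope cannot work term by term, since Lemma~\ref{lem:ls9an} gives $\alen{i}{x}{k-1}\geq\alen{i-1}{x}{k-1}$, which makes each max-term \emph{smaller} than the corresponding increment $\alen{i-1}{x+1}{k-1}-\alen{i-1}{x}{k-1}$, not larger.

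The paper avoids this by not unrolling: it inducts on $j$, and at each step bounds the single new contribution via
$\alen{i}{j}{k}\leq\alen{i-1}{j+1}{k-1}-\alen{i-1}{j}{k-1}+\max\{\alen{i-1}{j}{k},\ \alen{i}{j-1}{k}\}$,
then kills the max using Lemma~\ref{lem:cr4s1} (giving $\alen{i-1}{j}{k}\leq\alen{i-1}{j}{k-1}$) together with the inner induction hypothesis $\alen{i}{j-1}{k}\leq\alen{i-1}{j}{k-1}$, so that the $\alen{i-1}{j}{k-1}$ terms cancel exactly. If you want to keep your global decomposition you must retain all three sums and prove their joint non-negativity, which in effect re-derives the paper's one-step recursion; as written, the reduction step is fatally lossy. (Separately, your base-case remark needs the purely algebraic fact $\fwdn{i-1}{1}{k-2}\geq\fwdn{i-1}{1}{k-1}$, which does not follow from Lemma~\ref{lem:fwdE} or row non-negativity alone; the paper derives it from the defining recursion together with Lemma~\ref{lem:fwdC}.)
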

\begin{proof}
The proof will be an induction on~$j$.
The base case of $j=1$ is the claim
\begin{equation*}
\alen{i}{1}{k} = \fwdn{i}{1}{k-1}
\leq
\fwdn{i-1}{1}{k-2} +\fwdn{i-1}{2}{k-2} -\fwdn{i-1}{1}{k-1} = \alen{i-1}{2}{k-1}.
\end{equation*}
Since Lemma~\ref{lem:fwdD1} states that $\fwdn{i}{1}{k-1} \leq \fwdn{i-1}{2}{k-2}$, it suffices to show $0 \leq \fwdn{i-1}{1}{k-2} -\fwdn{i-1}{1}{k-1}$.
To see this, we infer from the definition of $\fwdn{i}{j}{k}$ that
$\fwdn{i}{1}{k} \leq \fwdn{i}{1}{k-1} -\fwdn{i-1}{1}{k-1} +\fwdn{i-1}{1}{k}$, rewrite this in the form $\fwdn{i-1}{1}{k-1} -\fwdn{i-1}{1}{k} \leq \fwdn{i}{1}{k-1} -\fwdn{i}{1}{k}$, and use Lemma~\ref{lem:fwdC} to conclude $0 \leq \fwdn{i}{1}{k-1} -\fwdn{i}{1}{k}$,
for every $1\leq i\leq n$.

Let us now move on to the induction step, setting $j>1$.
Using the notation~\eqref{eq:ds07n} we can express Lemma~\ref{lem:fwdD1} in the form
\begin{align*}
\fwdn{i}{j}{k-1}
\leq \fwdn{i-1}{j+1}{k-2}
= \fwdn{i-1}{j}{k-1} +\fwdn{i-1}{j+1}{k-2} -\fwdn{i-1}{j}{k-1}
= \fwdn{i-1}{j}{k-1} +\alen{i-1}{j+1}{k-1} -\alen{i-1}{j}{k-1}
\end{align*}
and rewrite the definition of $\fwdn{i}{j}{k}$ in the form
\begin{align*}
\fwdn{i}{j-1}{k}
&= \fwdn{i-1}{j}{k-1}
 +\min\left\{\textstyle
      0,
      \sum_{x=1}^{j-1} \fwdn{i  }{x}{k-1} 
     -\sum_{x=1}^{j-2} \fwdn{i  }{x}{k  }
     -\sum_{x=1}^{j  } \fwdn{i-1}{x}{k-1}
     +\sum_{x=1}^{j-1} \fwdn{i-1}{x}{k  }
  \right\}\\
&= \fwdn{i-1}{j}{k-1}
  +\min\left\{ 0, \alen{i}{j-1}{k} -\alen{i-1}{j}{k}\right\}.
\end{align*}
Combining these two relations, we find
\begin{align*}
\alen{i}{j}{k}
&=
 \fwdn{i}{j}{k-1} -\fwdn{i}{j-1}{k}
+\alen{i}{j-1}{k}\\
&\leq
 \alen{i-1}{j+1}{k-1} -\alen{i-1}{j}{k-1}
-\min\left\{ 0, \alen{i}{j-1}{k} -\alen{i-1}{j}{k}\right\}
+\alen{i}{j-1}{k}\\
&=
 \alen{i-1}{j+1}{k-1} -\alen{i-1}{j}{k-1}
+\max\left\{ \alen{i-1}{j}{k}, \alen{i}{j-1}{k} \right\}.
\end{align*}
Finally, Lemma~\ref{lem:cr4s1}, which states $\alen{i-1}{j}{k} \leq \alen{i-1}{j}{k-1}$, and our current induction hypothesis $\alen{i}{j-1}{k} \leq \alen{i-1}{j}{k-1}$ together imply
\begin{equation*}
\max\left\{ \alen{i-1}{j}{k}, \alen{i}{j-1}{k} \right\}
\leq
\alen{i-1}{j}{k-1},
\end{equation*}
so that
\begin{equation*}
\alen{i}{j}{k}
\leq
  \left( \alen{i-1}{j+1}{k-1} -\alen{i-1}{j}{k-1} \right)
 +\alen{i-1}{j}{k-1}
= \alen{i-1}{j+1}{k-1}.
\end{equation*}
Note that the conditions required for the applications of Lemma~\ref{lem:cr4s1} at all steps of the induction process are provided by the assumptions of the current lemma.
This concludes the induction step.
\end{proof}

The following result will be used later to justify our treatment of the (fake) riggings corresponding to empty rows.

\begin{lemma}
\label{lem:f8s3k}
If the interpretations of $\alen{i}{j}{k}$, $\alen{i}{j-1}{k+1}$, \dots, $\alen{i}{1}{k+j-1}$ given by Claim~\ref{claim} are correct and $\alen{i}{j}{k} = 0$, then $\fwdn{i}{j}{k-1} = 0$ and $\fwdn{i}{j}{k} = 0$.
\end{lemma}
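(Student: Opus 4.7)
The plan is to exploit the identification of the numbers $\alen{i}{j-l}{k+l}$, for $l=0,1,\ldots,j-1$, as the lengths of consecutive rows within the $i$-th part of the rigged configuration whose existence is provided by the hypothesis. That these quantities sit inside a single partition is visible from Definition~\ref{def:rc}: as $k$ increments by one the second index of the relevant $\alen{*}{*}{*}$-term drops by one, so reading $\alen{i}{j}{k}, \alen{i}{j-1}{k+1},\ldots,\alen{i}{1}{k+j-1}$ in order walks down the rows starting from row~$k$. Since row lengths in a partition are weakly decreasing and the first entry in this list is~$0$, Lemma~\ref{lem:di3sg} forces $\alen{i}{j-l}{k+l}=0$ for every $l \in \{0,1,\ldots,j-1\}$.

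Next I would invoke the definition~\eqref{eq:ds07n}: the telescoping sum $\sum_{l=0}^{j-1}\alen{i}{j-l}{k+l}$ collapses to $\sum_{x=1}^j \fwdn{i}{x}{k-1}$, which therefore vanishes, giving $\fwdn{i}{x}{k-1}=0$ for every $x\in\{1,\ldots,j\}$ by the non-negativity statement of Lemma~\ref{lem:fwdD2}. Specializing to $x=j$ delivers the first assertion. The same telescoping applied to $\sum_{l=1}^{j-1}\alen{i}{j-l}{k+l}$ leaves only $\sum_{x=1}^{j-1} \fwdn{i}{x}{k}$, so $\fwdn{i}{x}{k}=0$ for $x\in\{1,\ldots,j-1\}$. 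Combining Lemma~\ref{lem:ls9an} with Lemma~\ref{lem:di3sg} further forces $\alen{i'}{j-l}{k+l}=0$ for every $i'\le i$, so reusing the two telescopings yields $\fwdn{i'}{x}{k-1}=0$ for $x\le j$ and $\fwdn{i'}{x}{k}=0$ for $x\le j-1$, uniformly in $i'\in\{1,\ldots,i\}$.

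For the remaining assertion $\fwdn{i}{j}{k}=0$, I would substitute the vanishing values just established into the main assignment of Algorithm~\ref{alg:1} for $\fwdn{i}{j}{k}$; using additionally that $\sum_{x=1}^j \fwdn{i-1}{x}{k}$ reduces to $\fwdn{i-1}{j}{k}$, the algebraic argument inside the min collapses to $\fwdn{i-1}{j}{k}$, whence $\fwdn{i}{j}{k}\le \fwdn{i-1}{j}{k}$. Pairing this with the reverse inequality from Lemma~\ref{lem:fwdD2} yields $\fwdn{i}{j}{k}=\fwdn{i-1}{j}{k}$. Since the same derivation applies verbatim with $i$ replaced by any $i'\in\{2,\ldots,i\}$, iterating chains the equality down to $\fwdn{i}{j}{k}=\fwdn{1}{j}{k}=0$, the last equality being the initialization step of Algorithm~\ref{alg:1} (valid because $k\ge 1$). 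The only real subtlety I anticipate is verifying the collapse of the min expression in a way that cleanly covers both $j\ge 2$ and the boundary case $j=1$ under the usual empty-sum convention; beyond that, the argument is a plain telescoping followed by a descending induction on~$i$.
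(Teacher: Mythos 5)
Your proof is correct and follows essentially the same route as the paper's: force all the listed row lengths $\alen{i}{j-l}{k+l}$ to vanish, telescope~\eqref{eq:ds07n} to conclude $\fwdn{i'}{x}{k-1}=0$ for $x\le j$ and $\fwdn{i'}{x}{k}=0$ for $x\le j-1$ (uniformly in $i'\le i$), and then collapse the defining $\min$ of Algorithm~\ref{alg:1} to obtain the descending chain $\fwdn{x}{j}{k}\le\fwdn{x-1}{j}{k}$. The only cosmetic difference is that you terminate the chain at $\fwdn{1}{j}{k}=0$ via the initialization step, whereas the paper terminates it at $\fwdn{k}{j}{k}=0$ via Lemma~\ref{lem:fwdC} (splitting off the $i\le k$ case); both endings are valid.
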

\begin{proof}
The $\alen{*}{*}{*}$ values appearing in this claim are assumed to be lengths of rows appearing within the same partition, and, as such, must be non-negative and weakly decreasing in the order given, i.e.,
\begin{equation*}
0 = \alen{i}{j}{k} \geq \alen{i}{j-1}{k+1} \geq \dots \geq \alen{i}{1}{k+j-1} \geq 0,
\end{equation*}
so that they are all zero.
Recalling both claims of Lemma~\ref{lem:fwdD2} and using definition~\eqref{eq:ds07n} to write
\begin{equation*}
\textstyle
0 = \alen{i}{j}{k} + \dots + \alen{i}{1}{k+j-1}
= \sum_{y=1}^{j} \fwdn{i}{y}{k-1}
\geq \sum_{y=1}^{j} \fwdn{x}{y}{k-1}
\geq 0,
\end{equation*}
for $x \leq i$, we can claim that
\begin{equation}\label{eq:s3s3f}
 \fwdn{x}{y}{k-1} = 0, \quad \text{for $x\leq i$ and $1\leq y \leq j$.}
\end{equation}
In particular, this gives us the first claim $\fwdn{i}{j}{k-1} = 0$ of this lemma.
Repeating the above argument with $0 = \alen{i}{j-1}{k+1} + \dots + \alen{i}{1}{k+j-1}$, one can also obtain
\begin{equation}\label{eq:r8s2a}
 \fwdn{x}{y}{k} = 0, \quad \text{for $x\leq i$ and $1\leq y \leq j-1$}.
\end{equation}

Now, the definition of $\fwdn{i}{j}{k}$ implies that
\begin{equation*}
\textstyle
\fwdn{x}{j}{k} \leq
 \sum_{y=1}^{j  } \fwdn{x}{y}{k-1}
-\sum_{y=1}^{j-1} \fwdn{x}{y}{k}
-\sum_{y=1}^{j  } \fwdn{x-1}{y}{k-1}
+\sum_{y=1}^{j  } \fwdn{x-1}{y}{k}
\end{equation*}
and, if $x \leq i$, most of these terms are zero by~\eqref{eq:s3s3f} and~\eqref{eq:r8s2a}, so that
\begin{equation*}
\fwdn{x}{j}{k} \leq \fwdn{x-1}{j}{k},
\quad\text{for every $x \leq i$.}
\end{equation*}
Hence, if $i > k$, then $0 \leq \fwdn{i}{j}{k} \leq \fwdn{i-1}{j}{k} \leq \cdots \leq \fwdn{k}{j}{k} = 0$, by Lemma~\ref{lem:fwdD2} and Lemma~\ref{lem:fwdC}, so that these are all zeros.
On the other hand, if $i \leq k$, then $\fwdn{i}{j}{k} = 0$, directly by Lemma~\ref{lem:fwdC}.
\end{proof}

We are now close to working on the induction step itself.
The next lemma does not depend on the Induction Hypothesis.

\begin{lemma}
\label{lem:f9e2a}
Let $l_1 \geq l_2 \geq \cdots l_t \geq l_{t+1} = 0$ and $m = r_0 \geq r_1 \geq \cdots \geq r_t \geq r_{t+1} = 0$ be non-negative integers such that $r_k = 0$ for every $l_k = 0$.
Let $rc_-$ be a rigged configuration whose $p$-th part is the rigged partition of the following properties\textup{:}
\begin{enumerate}
\item The height is at most~$t$.
\item The length of the $k$-th row is $l_k$.
\item The rigging of the $k$-th row is $r_k$, \textup{(}if $l_k \neq 0$\textup{)}.
\end{enumerate}
Then the $p$-th part of $\ftil_p^m rc_-$ is a rigged partition of the following form\textup{:}
\begin{enumerate}
\item The height is at most~$t+1$.
\item The length of the $k$-th row is $l_k +r_{k-1} -r_k$. In particular, the $(t+1)$-th row is of length~$r_t$.
\item The rigging of the $k$-th row is $-r_{k-1}$, \textup{(}if the row is non-empty\textup{)}.
\end{enumerate}
\end{lemma}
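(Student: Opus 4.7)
The plan is to decompose the $m$ applications of $\ftil_p$ into $t+1$ successive \emph{phases} indexed $j = t+1, t, \ldots, 1$, where phase $j$ consists of exactly $r_{j-1} - r_j$ consecutive applications (with the conventions $r_{t+1} = 0$ and $r_0 = m$), each of which adds its box to row $j$. Since $\sum_{j=1}^{t+1}(r_{j-1}-r_j) = r_0 - r_{t+1} = m$, the phase lengths sum correctly (phases with $r_{j-1} = r_j$ are simply empty), and the number of boxes added to row $j$ equals $r_{j-1} - r_j$, yielding the claimed final length $l_j + r_{j-1} - r_j$.

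I would verify the phase description by induction on the phase index, processed in decreasing order from $j = t+1$. For phase $t+1$, the initial riggings $r_1, \ldots, r_t$ are non-negative and the vacuous rows carry no rigging (by the hypothesis that $r_k = 0$ whenever $l_k = 0$), so the first action creates a new row of length $1$ with rigging $-1$, and each of the next $r_t - 1$ actions extends this new row (the smallest non-positive rigging), decrementing its rigging by $1$ while decrementing every strictly longer row's rigging by $2$. At the end, row $t+1$ has length $r_t$ and rigging $-r_t$, and each row $k \leq t$ has rigging $r_k - 2r_t$. For the inductive step to phase $j$, the cumulative decrement on row $j$'s rigging across phases $t+1, \ldots, j+1$ telescopes to $2\sum_{j'=j+1}^{t+1}(r_{j'-1}-r_{j'}) = 2r_j$, so row $j$'s rigging equals $-r_j$ at the start of phase $j$. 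Using the monotonicity $r_0 \geq r_1 \geq \cdots \geq r_t$, one checks that $-r_j$ is the smallest non-positive rigging present and that row $j$ is selected by $\ftil_p$. The $r_{j-1} - r_j$ actions of phase $j$ then each extend row $j$, pulling its rigging from $-r_j$ to $-r_{j-1}$, while simultaneously decrementing the riggings of rows $1, \ldots, j-1$ by $2$ each. After all phases complete, row $k$ has length $l_k + r_{k-1} - r_k$ and rigging $-r_{k-1}$, as claimed.

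The main obstacle is verifying the tie-breaking cleanly, both within each phase and at phase transitions. Inside a phase, the strictness $r_j + a < r_{j-1}$ for intermediate action counts $a < r_{j-1}-r_j$ keeps row $j$'s rigging strictly smallest among the candidates, so row $j$ is unambiguously selected. At a phase boundary, the last action of phase $j$ equalizes row $j$'s rigging with row $j-1$'s at $-r_{j-1}$, creating a tie that the next phase resolves correctly: since row $j-1$ is longer, phase $j-1$ targets row $j-1$ as required. When several successive $r_k$'s coincide, the intervening phases are all empty and their bookkeeping collapses onto the first subsequent phase with a strict drop $r_{k-1} > r_k$. Handling these tie scenarios carefully, together with confirming that the "strictly longer" comparison during each phase correctly picks out rows $1, \ldots, j-1$ (so that the doubled decrement is applied to exactly the right rows), constitutes the technical heart of the verification.
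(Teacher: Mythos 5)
Your proposal is correct and follows essentially the same route as the paper's proof: both decompose the $m$ applications of $\ftil_p$ into bottom-up phases of sizes $r_{j-1}-r_j$ landing on row $j$ and track the telescoping riggings, the only difference being that the paper works this out on a two-row example with strict inequalities $r_0>r_1>r_2>0$ and then asserts the degenerate cases and the generalization to arbitrary height, whereas you run the induction on the phase index directly for general $t$. Like the paper, you defer the tie-breaking and degenerate cases; note only that the ``strictly longer'' bookkeeping you identify as the technical heart actually relies on shape constraints (e.g.\ that row $j$ never outgrows row $j-1$, so $l_j + r_{j-1} - r_j \le l_{j-1}$) which hold in the lemma's intended application but are not among its stated hypotheses --- a gap shared with the paper's own argument.
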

\begin{proof}
Let us first work with a small example under a slightly stronger set of conditions.
We fix strictly positive integers $r_0 > r_1 > r_2 > 0$ and consider the application of $\ftil_p^{r_0}$ to a rigged configuration whose $p$-th part is as follows:
(a)~It consists of two non-empty rows;
(b)~The riggings of the upper and lower rows are $r_1$ and $r_2$, respectively.

Since there are no non-positive riggings, the first $\ftil_p$ action will add one box to a new third row, set the rigging of this third row to~$-1$, and reduce the riggings of the two existing rows by~$2$ each.
Further applications of $\ftil_p$ must materialize on the third row, each of them reducing the rigging of the third row by~$1$ and reducing the riggings of the other two rows by~$2$, until the more rapidly decreasing rigging of the second row reaches the slowly decreasing rigging of the third row.
More precisely, each of the initial $r_2$-many applications of $\ftil_p$ adds a box to the third row, and the riggings of the three rows, listed from top to bottom, become $r_1-2r_2$, $-r_2$, and~$-r_2$.

As the three riggings satisfy $r_1-2r_2 > -r_2 = -r_2$, the next few applications of $\ftil_p$ must materialize on the second row.
Each additional such $\ftil_p$ action reduces the riggings of the three rows, listed from top to bottom, by $2$, $1$, and~$0$, respectively.
After $(r_1-r_2)$-many applications of $\ftil_p$ adding boxes to the second row, the riggings of the three row, listed from top to bottom, become $-r_1$, $-r_1$, and~$-r_2$.

The $\ftil_p$ actions then add boxes to the first row, reducing its rigging by~$1$ each time and not affecting the other two riggings.
As the bottom two rows have received $r_2+(r_1-r_2) = r_1$ applications of $\ftil_p$, the top row must receive the remaining $(r_0-r_1)$-many $\ftil_p$ actions, and its rigging becomes~$-r_0$.

In summary, the $r_0$-many applications of $\ftil_p$, increase the lengths of the three rows, including the bottom row which started out as empty, listed from top to bottom, by $r_0-r_1$, $r_1-r_2$, and~$r_2$.
The riggings of the three rows, listed from top to bottom, become $-r_0$, $-r_1$, and $-r_2$.
This is in full agreement with the contents of this claim.

One can now check that, with appropriate interpretations given to descriptions of the degenerate situations, the above arguments and the concluding summary remain valid with non-negative integers $r_0 \geq r_1 \geq r_2 \geq 0$ and a partition consisting of two non-empty rows.

Furthermore, when $r_2 = 0$, the process of creating a third row of length $r_2$ becomes vacuous and the above arguments and summary remain valid even if the second row was empty at the beginning.
Similarly, starting with an empty partition creates no problem if the starting riggings were given as $r_1=r_2=0$.
The $r_0=0$ case, which implies $r_1=r_2=0$ and no $\ftil_p$ action, is also compatible with the above description.

Finally, it is clear that what we have observed from this small two-row example generalizes easily to a rigged partition of arbitrary height.
\end{proof}

We are now ready to work on the induction step itself.
The next three lemmas describe the result of applying $\ftil_p^{\,\fwdfx{p}{q}}$ to the final element handled by the Induction Hypothesis.
The first of these presents the $p$-th part of the resulting rigged configuration and the two lemmas that follow discuss the riggings of the $(p-1)$-th and $(p+1)$-th parts.

\begin{lemma}
\label{lem:d9e2p}
Let us accept the \textup{Induction Hypothesis} and set
\begin{equation*}
rc_- = \begin{cases}
rc_{p-1,q}
 = \left(\ftil_{p-1}^{\,\fwdfx{p-1}{q}} \cdots \ftil_{1}^{\,\fwdfx{1}{q}}\right)
   \bftil^{\,\fwdfx{*}{q-1}} \cdots \bftil^{\,\fwdfx{*}{1}} \RCinfv,
 & \text{for $p\neq 1$,}\\
rc_{n-q+2,q-1}
 = \bftil^{\,\fwdfx{*}{q-1}} \cdots \bftil^{\,\fwdfx{*}{1}} \RCinfv,
 & \text{for $p=1$.}
\end{cases}
\end{equation*}
Then the $p$-th part of the rigged configuration $\ftil_{p}^{\,\fwdfx{p}{q}} rc_-$ is the rigged partition with the following properties\textup{:}
\begin{enumerate}
\item The height is at most~$q$.
\item The length of the $k$-th row is $\alen{p}{q-k+1}{k}$.
\item The rigging of the $k$-th row is $-\fwdn{p}{q-k+1}{k-1}$, \textup{(}if the row is non-empty\textup{)}.
\end{enumerate}
\end{lemma}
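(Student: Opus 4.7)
The strategy is to apply the black-box result Lemma~\ref{lem:f9e2a} to the $p$-th rigged partition of $rc_-$, after reading off its structure from the Induction Hypothesis via Definition~\ref{def:rc}. When $p\neq 1$, Item~\ref{rcdef3} applied to $rc_{p-1,q}$ with $i=p=u+1<n-v+1$ gives a partition of height at most $q-1$ whose $k$-th row has length $\alen{p}{q-k}{k}$ and rigging $\fwdn{p}{q-k}{k}$. When $p=1$, Item~\ref{rcdef4} applied to $rc_{n-q+2,q-1}$ in the regime $i<u$ yields the same lengths $\alen{1}{q-k}{k}$ but all riggings equal to $0$. Since $\fwdn{1}{q-k}{k}=0$ by Lemma~\ref{lem:fwdC}, the two cases unify under the parameters $l_k=\alen{p}{q-k}{k}$, $r_k=\fwdn{p}{q-k}{k}$ for $1\leq k\leq q-1$, with $l_q=r_q=0$ and $m=r_0=\fwdfx{p}{q}$, in keeping with the natural convention $\fwdn{p}{q}{0}=\fwdfx{p}{q}$.

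Next I would verify the hypotheses of Lemma~\ref{lem:f9e2a}. The weak decrease $l_k\geq l_{k+1}$ is automatic, because $rc_-$ is already a bona fide rigged configuration by the Induction Hypothesis. The chain $r_0\geq r_1\geq\cdots\geq r_{q-1}\geq 0$ is assembled from: the weakly increasing property of $\fwdfxv$ combined with Lemma~\ref{lem:fwdD1}, which gives $r_1=\fwdn{p}{q-1}{1}\leq\fwdn{p-1}{q}{0}=\fwdfx{p-1}{q}\leq\fwdfx{p}{q}=r_0$; Lemma~\ref{lem:fwdE}, which gives $r_k\geq r_{k+1}$ for $k\geq 1$; and Lemma~\ref{lem:fwdD2} for non-negativity. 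The vanishing compatibility ``$l_k=0$ implies $r_k=0$'' is precisely Lemma~\ref{lem:f8s3k}, whose hypothesis that the $\alen{p}{*}{*}$ values are interpretable as row-lengths inside a single partition is again provided by the Induction Hypothesis.

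Lemma~\ref{lem:f9e2a} then produces a rigged partition of height at most $q$ whose $k$-th row has length $l_k+r_{k-1}-r_k$ and rigging $-r_{k-1}$. Direct expansion using definition~\eqref{eq:ds07n} gives
\begin{equation*}
\alen{p}{q-k}{k}+\fwdn{p}{q-k+1}{k-1}-\fwdn{p}{q-k}{k}=\alen{p}{q-k+1}{k},
\end{equation*}
so the $k$-th row has length $\alen{p}{q-k+1}{k}$ and rigging $-\fwdn{p}{q-k+1}{k-1}$, matching the claim. The $k=1$ boundary is absorbed by $r_0=\fwdn{p}{q}{0}=\fwdfx{p}{q}$, and the $k=q$ boundary produces the new bottom row of length $r_{q-1}=\fwdn{p}{1}{q-1}=\alen{p}{1}{q}$ with rigging $-\fwdn{p}{1}{q-1}$, completing the description.

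I expect the main obstacle to be purely bookkeeping rather than conceptual: keeping the $p=1$ and $p\neq 1$ cases aligned under a common parameterization, and checking that the boundary indices $k=1$ (where the ``rigging'' $r_0$ is really the exponent $\fwdfx{p}{q}$) and $k=q$ (where a brand-new bottom row appears) fit the uniform pattern. All the real work has been pre-packaged into Lemma~\ref{lem:f9e2a} and the preparatory inequalities on $\fwdn{*}{*}{*}$ and $\alen{*}{*}{*}$, so the remaining argument is essentially a controlled substitution.
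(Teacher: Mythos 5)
Your proposal is correct and takes essentially the same route as the paper's proof: read the $p$-th part of $rc_-$ off the Induction Hypothesis (unifying the $p=1$ and $p\neq 1$ cases via Lemma~\ref{lem:fwdC}), verify the monotonicity and vanishing hypotheses using Lemmas~\ref{lem:fwdE}, \ref{lem:fwdD2}, and~\ref{lem:f8s3k}, and then apply Lemma~\ref{lem:f9e2a} followed by the telescoping identity $\alen{p}{q-k}{k}+\fwdn{p}{q-k+1}{k-1}-\fwdn{p}{q-k}{k}=\alen{p}{q-k+1}{k}$. The only cosmetic difference is that you obtain $r_0\geq r_1$ from Lemma~\ref{lem:fwdD1} plus the weak increase of $\fwdfxv$ (which would need the trivial separate remark $\fwdn{1}{q-1}{1}=0$ when $p=1$), whereas the paper gets the entire chain $\fwdfx{p}{q}=\fwdn{p}{q}{0}\geq\fwdn{p}{q-1}{1}\geq\cdots\geq\fwdn{p}{1}{q-1}\geq 0$ uniformly from Lemma~\ref{lem:fwdE}.
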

\begin{proof}
For $p\neq 1$, the Induction Hypothesis states that the $p$-th part of $rc_-$ is a rigged partition with the following properties: (a)~The height is at most $q-1$; (b)~The length of the $k$-th row is $\alen{p}{q-k}{k}$; (c)~The rigging of the $k$-th row is $\fwdn{p}{q-k}{k}$ (if the row is non-empty).
For $p=1$, the Induction Hypothesis states that the $1$-st part of $rc_-$ is a rigged partition with the following properties: (a)~The height is at most $q-1$; (b)~The length of the $k$-th row is $\alen{1}{q-k}{k}$; (c)~The rigging of the $k$-th row is~$0$ (if the row is non-empty).
The (c)-properties may seem different in the two cases, but since Lemma~\ref{lem:fwdC} states that $\fwdn{1}{q-k}{k} = 0$, we can view the $p=1$ case description of the $p$-th part of $rc_-$ as a special case of the $p\neq 1$ case description.

We know from Lemma~\ref{lem:fwdE} that the exponent $\fwdfx{p}{q}$ for $\ftil_p$ and the $q-1$ riggings satisfy
$\fwdfx{p}{q}
= \fwdn{p}{q}{0}
\geq \fwdn{p}{q-1}{1}
\geq \cdots
\geq \fwdn{p}{1}{q-1} \geq 0$.
We also know from Lemma~\ref{lem:f8s3k} that if the length $\alen{p}{q-k}{k}$ of a row is zero, then the corresponding (fake) rigging $\fwdn{p}{q-k}{k}$ is also zero.
Applying Lemma~\ref{lem:f9e2a} to this situation, we can claim that the $p$-th part of $\ftil_{p}^{\,\fwdfx{p}{q}} rc_-$ is a rigged partition with the following properties:
(a)~The height is at most~$q$; (b)~The length of the $k$-th row is $\alen{p}{q-k}{k}+\fwdn{p}{q-k+1}{k-1}-\fwdn{p}{q-k}{k} = \alen{p}{q-k+1}{k}$; (c)~The rigging of the $k$-th row is $-\fwdn{p}{q-k+1}{k-1}$.
\end{proof}

\begin{lemma}
\label{lem:c5k4s}
Let us accept the \textup{Induction Hypothesis} and set
\begin{equation*}
rc_- = rc_{p-1,q}
 = \left(\ftil_{p-1}^{\,\fwdfx{p-1}{q}} \cdots \ftil_{1}^{\,\fwdfx{1}{q}}\right)
   \bftil^{\,\fwdfx{*}{q-1}} \cdots \bftil^{\,\fwdfx{*}{1}} \RCinfv,
\end{equation*}
assuming $p\neq 1$.
Then the $(p-1)$-th part of the rigged configuration $\ftil_{p}^{\,\fwdfx{p}{q}} rc_-$ is the rigged partition with the following properties\textup{:}
\begin{enumerate}
\item The height is at most~$q$.
\item The length of the $k$-th row is $\alen{p-1}{q-k+1}{k}$.
\item The rigging of the $k$-th row is $0$, \textup{(}if the row is non-empty\textup{)}.
\end{enumerate}
\end{lemma}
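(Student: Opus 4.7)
The plan is to separately handle lengths and riggings. Since each $\ftil_p$-action only places a new box in the $p$-th rigged partition and alters the riggings (not the lengths) of the neighbouring parts, the row lengths on the $(p-1)$-th part in $\ftil_p^{\fwdfx{p}{q}} rc_-$ coincide with those in $rc_-$; by the \textup{Induction Hypothesis} and Item~\ref{rcdef3} of Definition~\ref{def:rc} applied with $i=u=p-1$, these lengths are precisely $L_k:=\alen{p-1}{q-k+1}{k}$, matching the lemma's assertion. The initial rigging of row~$k$ of the $(p-1)$-th part is $-\fwdn{p-1}{q-k+1}{k-1}$, so it suffices to show that the total rigging increment produced by the $\fwdfx{p}{q}$-many $\ftil_p$-actions is exactly $\fwdn{p-1}{q-k+1}{k-1}$.

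To track the increments, I will use the explicit wave decomposition of $\ftil_p^{\fwdfx{p}{q}}$ established in the proof of Lemma~\ref{lem:d9e2p} via Lemma~\ref{lem:f9e2a}. The operations split into $q$ waves indexed $k'=q,q-1,\ldots,1$: wave~$k'$ deposits $\fwdn{p}{q-k'+1}{k'-1}-\fwdn{p}{q-k'}{k'}$ boxes into the $k'$-th row of the $p$-th part, raising its length from $\alen{p}{q-k'}{k'}$ (or $0$ when $k'=q$) up to $\alen{p}{q-k'+1}{k'}$. Each individual box contributes $+1$ to the rigging of row~$k$ of the $(p-1)$-th part precisely when the $p$-th row being lengthened had length strictly less than $L_k$ immediately before the box was placed.

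Using the inequalities of this section one localizes $L_k$ within the row lengths appearing above. For $k'<k$, iterating Lemma~\ref{lem:di3sg} and invoking Lemmas~\ref{lem:cr4s1} and~\ref{lem:ls9an} gives $L_k\leq\alen{p}{q-k'}{k'}$, so the initial length in wave~$k'$ already rules out any contribution. For $k'>k$, the chain $\alen{p}{q-k'+1}{k'}\leq\alen{p}{q-k}{k+1}\leq L_k$ (iterate Lemma~\ref{lem:s2eda}, then apply Lemma~\ref{lem:dvlke}) shows every box in wave~$k'$ contributes, and the sum telescopes via~\eqref{eq:ds07n} to $\sum_{k'=k+1}^{q}\big(\alen{p}{q-k'+1}{k'}-\alen{p}{q-k'}{k'}\big)=\fwdn{p}{q-k}{k}$. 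Wave~$k'=k$ is the only partial case: since $L_k\leq\alen{p}{q-k+1}{k}$, its contribution is $\max\{0,\,L_k-\alen{p}{q-k}{k}\}$. Thus the total increment equals
\[
\fwdn{p}{q-k}{k}+\max\big\{0,\,L_k-\alen{p}{q-k}{k}\big\}.
\]

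Writing $A:=\fwdn{p-1}{q-k+1}{k-1}$ and $B$ for the sum argument appearing in $\fwdn{p}{q-k}{k}=\min\{A,B\}$ from Algorithm~\ref{alg:1}, a direct expansion using~\eqref{eq:ds07n} yields $L_k-\alen{p}{q-k}{k}=A-B$, and then the trivial identity $\min\{A,B\}+\max\{0,A-B\}=A$ collapses the total to $\fwdn{p-1}{q-k+1}{k-1}$, cancelling the initial rigging exactly. Possibly empty rows present no extra difficulty: Lemma~\ref{lem:f8s3k} ensures that whenever $L_k=0$ one also has $\fwdn{p-1}{q-k+1}{k-1}=0$, so the ``fake'' initial rigging and the target value~$0$ agree. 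The main obstacle is the telescoping localization of $L_k$ carried out in the third paragraph; once this is in place, the final algebraic cancellation is immediate.
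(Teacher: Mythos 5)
Your proposal is correct and follows essentially the same route as the paper: the per-wave contribution formula you describe is exactly the paper's displayed sum~\eqref{eq:c9i5j}, your three cases $k'<k$, $k'=k$, $k'>k$ match the paper's treatment of the first $k-1$ lines, the $k$-th line, and the remaining lines (using the same Lemmas~\ref{lem:cr4s1}, \ref{lem:ls9an}, \ref{lem:s2eda}, \ref{lem:dvlke}, with your use of Lemma~\ref{lem:di3sg} in the first case being a harmless variant), and your identity $\min\{A,B\}+\max\{0,A-B\}=A$ is just a repackaging of the paper's derivation of~\eqref{eq:a3f3s} followed by the telescoping of~\eqref{eq:m4i5o}. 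No substantive difference.
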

\begin{proof}
For $p\neq 1$, the Induction Hypothesis states that the $(p-1)$-th part of $rc_-$ is a rigged partition with the following properties: (a)~The height is at most $q$; (b)~The length of the $k$-th row is $\alen{p-1}{q-k}{k}$; (c)~The rigging of the $k$-th row is $-\fwdn{p-1}{q-k+1}{k-1}$ (if the row is non-empty).
Since the $\ftil_p$ actions will not add boxes to or remove boxes from the $(p-1)$-th part, we already have confirmation of the first and second properties stated by this claim.

It remains to see the effect of $\ftil_{p}^{\,\fwdfx{p}{q}}$ action on the riggings of the $(p-1)$-th part.
Generalizing our experience of the $A_4$-type example discussed in Section~\ref{sec:A4}, we can state that the rigging of the $k$-th row of the $(p-1)$-th part from $\ftil_{p}^{\,\fwdfx{p}{q}} rc_-$ will be as follows:
\begin{equation}
\label{eq:c9i5j}
\begin{aligned}
-\fwdn{p-1}{q-k+1}{k-1}
&+\min\left\{
    \alen{p}{q}{1} -\alen{p}{q-1}{1},
    \max\left( 0, \alen{p-1}{q-k+1}{k} -\alen{p}{q-1}{1} \right)
 \right\}\\
&+\min\left\{
    \alen{p}{q-1}{2} -\alen{p}{q-2}{2},
    \max\left( 0, \alen{p-1}{q-k+1}{k} -\alen{p}{q-2}{2} \right)
  \right\}\\
&\ \vdots\\
&+\min\left\{
    \alen{p}{q-k+2}{k-1} -\alen{p}{q-k+1}{k-1},
    \max\left( 0, \alen{p-1}{q-k+1}{k} -\alen{p}{q-k+1}{k-1} \right)
  \right\}\\
&+\min\left\{
    \alen{p}{q-k+1}{k} -\alen{p}{q-k}{k},
    \max\left( 0, \alen{p-1}{q-k+1}{k} -\alen{p}{q-k}{k} \right)
  \right\}\\
&+\min\left\{
    \alen{p}{q-k}{k+1} -\alen{p}{q-k-1}{k+1},
    \max\left( 0, \alen{p-1}{q-k+1}{k} -\alen{p}{q-k-1}{k+1} \right)
  \right\}\\
&\ \vdots\\
&+\min\left\{
    \alen{p}{2}{q-1} -\alen{p}{1}{q-1},
    \max\left( 0, \alen{p-1}{q-k+1}{k} -\alen{p}{1}{q-1} \right)
  \right\}\\
&+\min\left\{
    \alen{p}{1}{q} -\alen{p}{0}{q},
    \max\left( 0, \alen{p-1}{q-k+1}{k} -\alen{p}{0}{q} \right)
  \right\}.
\end{aligned}
\end{equation}
Our use of the notation $\alen{p}{q-x+1}{x}$ ($1\leq x \leq q$) as lengths of rows is justified by Lemma~\ref{lem:d9e2p}, and the use of all other $\alen{*}{*}{*}$ terms is justified by the Induction Hypothesis.
The left item of each $\min\{\ ,\ \}$ term is the number of boxes added by $\ftil_{p}^{\,\fwdfx{p}{q}}$ to a row of the $p$-th rigged partition.
The addition of each box to the $p$-th part increases the rigging of a row belonging to the $(p-1)$-th part by~$1$ if and only if the row that is having the box added to is shorter than the row we are focusing on, and this condition is captured by the $\max\{0,\ \}$ term placed within each $\min\{\ ,\ \}$ term.
The $\max\{0,\ \}$ term measures how large the $k$-th row of the $(p-1)$-th partition is in comparison to the initial length of the row that is receiving the box additions.

We start the simplification of~\eqref{eq:c9i5j} by first studying the $k$-th line.
Being the number of boxes added by $\ftil_{p}^{\,\fwdfx{p}{q}}$, the left item of the $\min\{\ ,\ \}$ term must be non-negative.
We also know from Lemma~\ref{lem:ls9an} that $\alen{p}{q-k+1}{k} \geq \alen{p-1}{q-k+1}{k}$.
Making the further observation that the two terms being subtracted are identical, we can claim that the $k$-th line of~\eqref{eq:c9i5j} is equal to the right $\max\{0,\ \}$ term.
To simplify this further, we can rewrite the definition of~$\fwdn{i}{j}{k}$ in the form
\begin{align*}
\fwdn{p}{q-k}{k}
&= \min\left\{
  \textstyle
  \ \fwdn{p-1}{q-k+1}{k-1},\
  \begin{aligned}
  \fwdn{p-1}{q-k+1}{k-1} 
   +\left(\textstyle
    \sum_{x=1}^{q-k  } \fwdn{p}{x}{k-1} -\sum_{x=1}^{q-k-1} \fwdn{p}{x}{k}
    \right)&\\
  -\left(\textstyle
    \sum_{x=1}^{q-k+1} \fwdn{p-1}{x}{k-1} -\sum_{x=1}^{q-k  } \fwdn{p-1}{x}{k}
    \right)&
  \end{aligned}
\right\}\\
&= \fwdn{p-1}{q-k+1}{k-1}
  +\min\left\{
     0,
     \alen{p}{q-k}{k} -\alen{p-1}{q-k+1}{k}
   \right\}
\end{align*}
and move two terms to opposite sides to conclude that the $k$-th line of~\eqref{eq:c9i5j} is equal to
\begin{equation}
\label{eq:a3f3s}
\max\left( 0, \alen{p-1}{q-k+1}{k} -\alen{p}{q-k}{k} \right)
= \fwdn{p-1}{q-k+1}{k-1} -\fwdn{p}{q-k}{k}.
\end{equation}

Let us next treat the first $k-1$ lines of~\eqref{eq:c9i5j}.
Supplementing Lemma~\ref{lem:s2eda} to the arguments made for the $k$-th line, one can claim as before that the right $\max\{0,\ \}$ term is the smaller of the two terms appearing in each $\min\{\ ,\ \}$ term among these $k-1$ lines.
Now, combining Lemma~\ref{lem:cr4s1} and Lemma~\ref{lem:ls9an} we know
\begin{equation*}
\alen{p-1}{q-k+1}{k} \leq \alen{p-1}{q-k+1}{k-1} \leq \alen{p}{q-k+1}{k-1}
\end{equation*}
so that the $\max\{0,\ \}$ term of the $(k-1)$-th line must be zero.
Combining this further with Lemma~\ref{lem:s2eda} we can claim that every $\min\{\ ,\ \}$ term appearing in the first $k-1$ lines of~\eqref{eq:c9i5j} are zero.

It remains to handle the $(k+1)$-th through $q$-th lines of~\eqref{eq:c9i5j}.
Noting $\alen{p}{q-k}{k+1} \leq \alen{p-1}{q-k+1}{k}$ from Lemma~\ref{lem:dvlke}, we see that the $(k+1)$-th line of~\eqref{eq:c9i5j} is equal to the left item of the $\min\{\ ,\ \}$ term.
Combining this further with Lemma~\ref{lem:s2eda}, we see that the same may be said of all the $(k+1)$-th through $q$-th lines of~\eqref{eq:c9i5j}.
Recalling the definition~\eqref{eq:ds07n}, we can write these terms as follows:
\begin{equation}
\label{eq:m4i5o}
\begin{aligned}
\alen{p}{q-k}{k+1} -\alen{p}{q-k-1}{k+1}
&= \fwdn{p}{q-k}{k} -\fwdn{p}{q-k-1}{k+1}\\
&\ \vdots\\
\alen{p}{2}{q-1} -\alen{p}{1}{q-1}
&= \fwdn{p}{2}{q-2} -\fwdn{p}{1}{q-1}\\
\alen{p}{1}{q} -\alen{p}{0}{q}
&= \fwdn{p}{1}{q-1}
\end{aligned}
\end{equation}

Finally, adding all terms of~\eqref{eq:a3f3s} and~\eqref{eq:m4i5o} to the initial rigging $-\fwdn{p-1}{q-k+1}{k-1}$, we arrive at the claimed rigging of zero.
\end{proof}

\begin{lemma}
\label{lem:f3k4s}
Let us accept the \textup{Induction Hypothesis} and set 
\begin{equation*}
rc_- = \begin{cases}
rc_{p-1,q}
 = \left(\ftil_{p-1}^{\,\fwdfx{p-1}{q}} \cdots \ftil_{1}^{\,\fwdfx{1}{q}}\right)
   \bftil^{\,\fwdfx{*}{q-1}} \cdots \bftil^{\,\fwdfx{*}{1}} \RCinfv,
 & \text{for $p\neq 1$,}\\
rc_{n-q+2,q-1}
 = \bftil^{\,\fwdfx{*}{q-1}} \cdots \bftil^{\,\fwdfx{*}{1}} \RCinfv,
 & \text{for $p=1$.}
\end{cases}
\end{equation*}
Then the $(p+1)$-th part of the rigged configuration $\ftil_{p}^{\,\fwdfx{p}{q}} rc_-$ is the rigged partition with the following properties\textup{:}
\begin{enumerate}
\item The height is at most~$q-1$.
\item The length of the $k$-th row is $\alen{p+1}{q-k}{k}$.
\item If $p < n-q+1$, the rigging of the $k$-th row is $\fwdn{p+1}{q-k}{k}$, \textup{(}assuming the row is non-empty\textup{)}.
\item If $p = n-q+1$, the rigging of the $k$-th row is $-\fwdn{p+1}{q-k}{k-1} +\fwdn{p+1}{q-k}{k}$, \textup{(}assuming the row is non-empty\textup{)}.
\end{enumerate}
\end{lemma}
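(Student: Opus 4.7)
The plan is to follow the template of Lemma~\ref{lem:c5k4s}. Since the operators in $\ftil_p^{\,\fwdfx{p}{q}}$ add boxes only to the $p$-th part, the $(p+1)$-th part of $\ftil_p^{\,\fwdfx{p}{q}} rc_-$ retains the height bound and row lengths of the $(p+1)$-th part of $rc_-$. In either case of the definition of $rc_-$, the Induction Hypothesis identifies the $k$-th row of the $(p+1)$-th part of $rc_-$ as having length $\alen{p+1}{q-k}{k}$ (matching items~(1) and~(2) of the claim) and identifies its initial rigging as $0$ when $p<n-q+1$, or $-\fwdn{p+1}{q-k}{k-1}$ when $p=n-q+1$. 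It therefore suffices to show that the net rigging change due to $\ftil_p^{\,\fwdfx{p}{q}}$ is exactly $\fwdn{p+1}{q-k}{k}$.

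Mirroring the derivation that leads to~\eqref{eq:c9i5j}, this net change will be expressed as a sum of $q$ terms indexed by the rows of the $p$-th part, with the $x$-th summand equal to
\begin{equation*}
\min\bigl\{\alen{p}{q-x+1}{x}-\alen{p}{q-x}{x},\ \max\bigl(0,\ \alen{p+1}{q-k}{k}-\alen{p}{q-x}{x}\bigr)\bigr\}.
\end{equation*}
The left item is the number of boxes added to the $x$-th row of the $p$-th part, available from Lemma~\ref{lem:d9e2p} and the Induction Hypothesis, and the right item records for how many of those additions the row is still strictly shorter than $\alen{p+1}{q-k}{k}$, the only circumstance under which the target rigging is incremented.

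The sum is then evaluated by splitting on the position of $x$ relative to $k$. For $x<k$, the plan is to chain $\alen{p+1}{q-k}{k} \leq \alen{p}{q-k+1}{k-1} \leq \alen{p}{q-x}{x}$, where the first inequality is Lemma~\ref{lem:dvlke} and the second follows from the weak decrease of the pre-action $p$-th part row lengths, forcing the $\max(0,\cdot)$ term and hence the entire summand to vanish. For $x>k$, Lemma~\ref{lem:cr4s1}, Lemma~\ref{lem:ls9an} and the weak decrease of the post-action $p$-th part rows give $\alen{p}{q-x+1}{x} \leq \alen{p}{q-k}{k+1} \leq \alen{p+1}{q-k}{k}$, so the left item of the $\min$ is the smaller; these full box counts then telescope to $\fwdn{p}{q-k}{k}$.

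The main obstacle is the $x=k$ line, where neither argument of the $\min$ dominates on general grounds. Here the plan is to rewrite the definition of $\fwdn{p+1}{q-k}{k}$ in the form $\min\{\fwdn{p}{q-k+1}{k-1},\ \alen{p+1}{q-k}{k}-\alen{p}{q-k}{k}+\fwdn{p}{q-k}{k}\}$ and combine it with the identity $\alen{p}{q-k+1}{k}-\alen{p}{q-k}{k} = \fwdn{p}{q-k+1}{k-1}-\fwdn{p}{q-k}{k}$ to recognize the $x=k$ contribution as $\fwdn{p+1}{q-k}{k}-\fwdn{p}{q-k}{k}$. Adding this to the $\fwdn{p}{q-k}{k}$ coming from $x>k$ yields the desired net change $\fwdn{p+1}{q-k}{k}$, which, added to the two possible initial riggings, gives precisely the values listed in items~(3) and~(4).
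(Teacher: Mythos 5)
Your proposal is correct and follows essentially the same route as the paper's proof: the same decomposition of the rigging change into $q$ min--max terms indexed by the rows of the $p$-th part (the paper's display~\eqref{eq:mrDel}), the same treatment of the lines above $k$ via Lemma~\ref{lem:dvlke} and Lemma~\ref{lem:s2eda}, the same rewriting of the definition of $\fwdn{p+1}{q-k}{k}$ to evaluate the $x=k$ line as $\fwdn{p+1}{q-k}{k}-\fwdn{p}{q-k}{k}$, and the same telescoping of the lines below $k$ to $\fwdn{p}{q-k}{k}$ via Lemma~\ref{lem:cr4s1} and Lemma~\ref{lem:ls9an}.
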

\begin{proof}
The proof is similar to that of Lemma~\ref{lem:c5k4s}, so we will be brief.
For $p\neq 1$, the Induction Hypothesis states that the $(p+1)$-th part of $rc_-$ is a rigged partition with the following properties:
(a)~The height is at most $q-1$;
(b)~The length of the $k$-th row is $\alen{p+1}{q-k}{k}$;
(c)~If $p < n-q+1$, the rigging of the $k$-th row is $0$ (assuming the row is non-empty);
(d)~If $p = n-q+1$, the rigging of the $k$-th row is $-\fwdn{p+1}{q-k}{k-1}$ (assuming the row is non-empty).
If $p=1$, the Induction Hypothesis states that the $2$-nd part of $rc_-$ is a rigged partition with the following properties:
(a)~The height is at most $q-1$;
(b)~The length of the $k$-th row is $\alen{2}{q-k}{k}$;
(c)~If $q < n$, the rigging of the $k$-th row is $0$ (assuming the row is non-empty);
(d)~If $q = n$, the rigging of the $k$-th row is $-\fwdn{2}{q-k}{k-1}$ (assuming the row is non-empty).
The $p=1$ case description of the $(p+1)$-th part of $rc_-$ may be viewed as a special case of the $p\neq 1$ case description.

The first two properties stated by this claim have already been confirmed and it remains to compute the riggings.
After the $\ftil_{p}^{\,\fwdfx{p}{q}}$ action, the rigging of the $k$-th row of the $(p+1)$-th part will be as follows:
\begin{equation}
\label{eq:mrDel}
\begin{aligned}
\left(0\quad\text{or}\quad -\fwdn{p+1}{q-k}{k-1} \right)
&+\min\left\{
    \alen{p}{q}{1} -\alen{p}{q-1}{1},
    \max\left( 0, \alen{p+1}{q-k}{k} -\alen{p}{q-1}{1} \right)
 \right\}\\
&+\min\left\{
    \alen{p}{q-1}{2} -\alen{p}{q-2}{2},
    \max\left( 0, \alen{p+1}{q-k}{k} -\alen{p}{q-2}{2} \right)
  \right\}\\
&\hspace{3.8pt}\vdots\\
&+\min\left\{
    \alen{p}{q-k+2}{k-1} -\alen{p}{q-k+1}{k-1},
    \max\left( 0, \alen{p+1}{q-k}{k} -\alen{p}{q-k+1}{k-1} \right)
  \right\}\\
&+\min\left\{
    \alen{p}{q-k+1}{k} -\alen{p}{q-k}{k},
    \max\left( 0, \alen{p+1}{q-k}{k} -\alen{p}{q-k}{k} \right)
  \right\}\\
&+\min\left\{
    \alen{p}{q-k}{k+1} -\alen{p}{q-k-1}{k+1},
    \max\left( 0, \alen{p+1}{q-k}{k} -\alen{p}{q-k-1}{k+1} \right)
  \right\}\\
&\hspace{3.8pt}\vdots\\
&+\min\left\{
    \alen{p}{2}{q-1} -\alen{p}{1}{q-1},
    \max\left( 0, \alen{p+1}{q-k}{k} -\alen{p}{1}{q-1} \right)
  \right\}\\
&+\min\left\{
    \alen{p}{1}{q} -\alen{p}{0}{q},
    \max\left( 0, \alen{p+1}{q-k}{k} -\alen{p}{0}{q} \right)
  \right\}
\end{aligned}
\end{equation}
Every use of the $\alen{*}{*}{*}$ notation as the length of a row is justified by either Lemma~\ref{lem:d9e2p} or the Induction Hypothesis.

Let us first deal with the top $k-1$ lines.
All first items of the $\min\{\ ,\ \}$ terms are non-negative and Lemma~\ref{lem:dvlke} implies $\alen{p+1}{q-k}{k} \leq \alen{p}{q-k+1}{k-1}$ so that the $(k-1)$-th line is zero.
Adding Lemma~\ref{lem:s2eda} to this argument, we see that all the $\min\{\ ,\ \}$ terms in the initial $k-1$ lines are zero.

Next, since Lemma~\ref{lem:ls9an} implies $\alen{p+1}{q-k}{k} \geq \alen{p}{q-k}{k}$, the $k$-th line must be equal to
\begin{align*}
\min&\left\{
   \fwdn{p}{q-k+1}{k-1} -\fwdn{p}{q-k}{k},
   \alen{p+1}{q-k}{k} -\alen{p}{q-k}{k}
\right\}\\
&= \min\left\{
   \fwdn{p}{q-k+1}{k-1},
   \alen{p+1}{q-k}{k} -\alen{p}{q-k}{k} +\fwdn{p}{q-k}{k}
\right\}
-\fwdn{p}{q-k}{k}\\
&= \fwdn{p+1}{q-k}{k} -\fwdn{p}{q-k}{k}.
\end{align*}

Focusing on the $(k+1)$-th line, we can combining Lemma~\ref{lem:cr4s1} and Lemma~\ref{lem:ls9an} to claim
\begin{equation*}
\alen{p}{q-k}{k+1} \leq \alen{p}{q-k}{k} \leq \alen{p+1}{q-k}{k}
\end{equation*}
so that the $\min\{\ \,\ \}$ term must be equal to its left item.
Adding Lemma~\ref{lem:s2eda} to this argument, we can claim the same of all the $(k+1)$-th through $q$-th lines, and these are equal to the following:
\begin{equation}
\label{eq:j7u1o}
\begin{aligned}
\alen{p}{q-k}{k+1} -\alen{p}{q-k-1}{k+1}
&= \fwdn{p}{q-k}{k} -\fwdn{p}{q-k-1}{k+1}\\
&\hspace{5pt}\vdots\\
\alen{p}{2}{q-1} -\alen{p}{1}{q-1}
&= \fwdn{p}{2}{q-2} -\fwdn{p}{1}{q-1}\\
\alen{p}{1}{q} -\alen{p}{0}{q}
&= \fwdn{p}{1}{q-1}
\end{aligned}
\end{equation}

Finally, adding the $k$-th line term $\fwdn{p+1}{q-k}{k} -\fwdn{p}{q-k}{k}$ and all terms of~\eqref{eq:j7u1o} to the initial rigging of either $0$ or $-\fwdn{p+1}{q-k}{k-1}$, we arrive at the rigging of either $\fwdn{p+1}{q-k}{k}$ or $-\fwdn{p+1}{q-k}{k-1} +\fwdn{p+1}{q-k}{k}$ as claimed.
\end{proof}

Recall that an $\ftil_p$ action on a rigged configuration will add a box to its $p$-th part and change the riggings of the $p$-th and its neighboring parts, but not affect any other parts.
Hence, the preceding three lemmas describe everything there is to know about $\ftil_{p}^{\,\fwdfx{p}{q}} rc_-$.
It is now tedious but easy to confirm that we have reached the goal of our induction step.

This concludes our lengthy induction step and we have finally obtained a description of the rigged configuration $\bftil^{\,\fwdfxv} \RCinfv = rc_{1,n}$.
Let us specialize Definition~\ref{def:rc} to the case of~$rc_{1,n}$ and rewrite it in its simplified form for easy reference.
The new length symbol $\fwdlen{i}{k}$ appearing in the following definition is the special case $\alen{i}{n-i-k+2}{k}$ of the existing symbol.

\begin{definition}
\label{def:lenrig}
Let $\fwdfxv = (\fwdfx{i}{j})_{i,j}$ be a collection of non-negative integers that is weakly increasing with respect to the index~$i$ and let $\fwdnv = (\fwdn{i}{j}{k})_{i,j,k}$ be its extension obtained through Algorithm~\ref{alg:1}.
Define $rc_{\fwdfxv}$ to be the rigged configuration whose $i$-th part, for each $1\leq i \leq n$, is a rigged partition of height at most $n-i+1$ such that its $k$-th row is of length
\begin{align}
\label{eq:len}
\fwdlen{i}{k} &= \sum_{j=1}^{n-i-k+2} \fwdn{i}{j}{k-1} - \sum_{j=1}^{n-i-k+1} \fwdn{i}{j}{k},\\
\intertext{and has rigging}
\label{eq:rig}
\fwdrig{i}{k} &= -\fwdn{i}{n-i-k+2}{k-1} + \fwdn{i}{n-i-k+2}{k},
\end{align}
for $1 \leq k \leq n-i+1$.
The specified rigging $\fwdrig{i}{k}$ is to be ignored whenever the corresponding length $\fwdlen{i}{k}$ is zero.
\end{definition}

The result of the lengthy induction argument we gave in this section may be summarized as follows.
The second claim given here follows directly from the first claim and Lemma~\ref{lem:bij}.

\begin{theorem}\label{thm:fwdgenA}
Let $\fwdfxv = (\fwdfx{i}{j})_{i,j}$ be a collection of non-negative integers that is weakly increasing with respect to the index~$i$, and let $\fwdnv = (\fwdn{i}{j}{k})_{i,j,k}$ be its extension specified by Algorithm~\ref{alg:1}.
Then, $\bftil^{\,\fwdfxv} \RCinfv$ is equal to $rc_{\fwdfxv}$.
Each element of the crystal $\RCinf$ may be written uniquely in the form $rc_{\fwdfxv}$, for some weakly increasing $\fwdfxv\in\fwdfxvset$.
\end{theorem}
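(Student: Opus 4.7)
My plan is to obtain the first assertion $\bftil^{\,\fwdfxv}\RCinfv = rc_{\fwdfxv}$ by completing the induction argument whose framework has already been laid out, and then to derive the second assertion as a short consequence of Lemma~\ref{lem:bij} together with Lemma~\ref{lem:2}.

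For the first assertion, I would finish the induction on the pair $(p,q)$ ordered lexicographically by~$q$ first and then~$p$. The base case $(p,q)=(1,1)$ was checked directly against the description of $rc_{1,1}$ read off from Definition~\ref{def:rc}. For the induction step, the three lemmas preceding the theorem give a complete description of $\ftil_p^{\,\fwdfx{p}{q}} rc_-$: Lemma~\ref{lem:d9e2p} handles the $p$-th part, Lemma~\ref{lem:c5k4s} handles the $(p-1)$-th part when $p\neq 1$, and Lemma~\ref{lem:f3k4s} handles the $(p+1)$-th part, while every other part is untouched by $\ftil_p$. The task is then bookkeeping: match the resulting data term by term against Definition~\ref{def:rc} instantiated at $(u,v)=(p,q)$, distinguishing the subcase $p<n-q+1$ (Item~\ref{rcdef3}) from the boundary subcase $p=n-q+1$ (Item~\ref{rcdef4}), which is precisely the subcase responsible for the split in Lemma~\ref{lem:f3k4s}. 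The transition from the end of one row of the iteration to the beginning of the next, where $rc_{n-q+2,q-1}$ plays the role of $rc_-$ when $p=1$, is absorbed into the same matching.

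After the induction terminates at $(p,q)=(1,n)$, we have $\bftil^{\,\fwdfxv}\RCinfv = rc_{1,n}$. Specializing Definition~\ref{def:rc} at $u=1$, $v=n$ puts us in the $u=n-v+1$ branch governed by Item~\ref{rcdef4}, so the ranges $1\le i<u$ and $u+1\le i\le n-v+1$ are empty and every $i$ satisfies $u+1=n-v+2\le i\le n$; the rigging formula collapses to the single expression $-\fwdn{i}{n-i-k+2}{k-1}+\fwdn{i}{n-i-k+2}{k}$, which is exactly~\eqref{eq:rig}, and the length $\alen{i}{n-i-k+2}{k}$, unfolded via~\eqref{eq:ds07n}, is exactly~\eqref{eq:len}. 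Hence $rc_{1,n}=rc_{\fwdfxv}$, proving the first assertion. For the second assertion, Lemma~\ref{lem:2} provides a crystal isomorphism $\RCinf\cong\Binf$ carrying $\RCinfv$ to $\Binfv$ and intertwining the Kashiwara operators. Composing with the bijection $\fwdfxv\mapsto\bftil^{\,\fwdfxv}\Binfv$ from $\fwdfxvset$ to $\Binf$ supplied by Lemma~\ref{lem:bij} yields a bijection $\fwdfxv\mapsto\bftil^{\,\fwdfxv}\RCinfv$ from $\fwdfxvset$ to $\RCinf$. Combined with the first assertion, this gives the uniqueness of the representation $rc_{\fwdfxv}$.

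The main obstacle is not in assembling the theorem from its precursors but in the lemmas that feed it. The delicate part is maintaining, throughout the induction, that the formal data produced by Definition~\ref{def:rc} actually constitute a legitimate rigged configuration: the $\alen{i}{*}{*}$ values must decrease weakly down the rows of each part, and the various $\fwdn{*}{*}{*}$ inequalities (Lemmas~\ref{lem:fwdD1}--\ref{lem:f8s3k}, together with Lemmas~\ref{lem:cr4s1}--\ref{lem:dvlke} whose validity is contingent on Claim~\ref{claim}) must hold at exactly the stage where Lemma~\ref{lem:f9e2a} is invoked. Once those ingredients are in place, the passage from them to Theorem~\ref{thm:fwdgenA} is essentially the verification described above.
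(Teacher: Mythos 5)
Your proposal is correct and follows essentially the same route as the paper: the same lexicographic induction on $(p,q)$ with base case read off from $rc_{1,1}$, the induction step assembled from Lemmas~\ref{lem:d9e2p}, \ref{lem:c5k4s}, and~\ref{lem:f3k4s} by matching against Definition~\ref{def:rc} at $(u,v)=(p,q)$, the specialization of $rc_{1,n}$ to Definition~\ref{def:lenrig} (where your remark that the $i=u=1$ rigging also fits the two-term formula is justified by Lemma~\ref{lem:fwdC}), and the second assertion from Lemma~\ref{lem:bij} (with Lemma~\ref{lem:2} made explicit, which the paper leaves implicit). You also correctly locate the real work in the supporting lemmas rather than in the final assembly.
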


This achieves our first goal of listing all elements of the crystal $\RCinf \cong \Binf$ in a manner that does not involve Kashiwara operator applications.
Note that we may now freely use Claim~\ref{claim} as a valid result for any meaningful choice of the indices such that $j\neq 0$.
This also allows us to freely use all the lemmas in this section that depended on Claim~\ref{claim} being true.

The $i$-th rigged partition of the rigged configuration $rc_{\fwdfxv}$ may be presented pictorially as follows:
\begin{equation}\label{eq:fwdnui}
\raisebox{-2.5cm}{\includegraphics{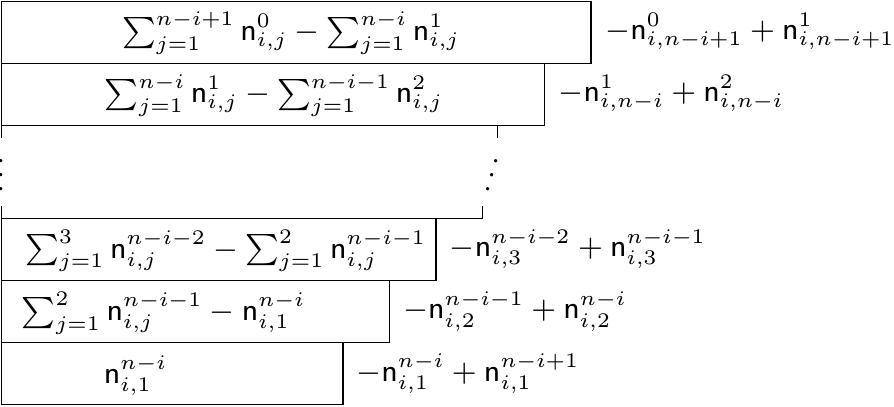}}
\end{equation}
The definition of $rc_{\fwdfxv}$ bounds the height of this $i$-th partition to~$n-i+1$.
However, for $i \leq \frac{n}{2}$, we know from Lemma~\ref{lem:fwdC} that the numbers within many of the lower rows of~\eqref{eq:fwdnui} are zero, and one can claim that the height of the $i$-th partition is at most $\min\{i,n-i+1\}$.
In fact, this symmetric bound on the height should be expected in view of the $A_n$-type Dynkin diagram automorphism $i\mapsto n-i+1$.
Although we could have removed some of the computations appearing in Algorithm~\ref{alg:1} by reflecting this bound, we have not done so in favor of a presentation that is more uniform over the partitions.

\section{Distinguishing the Elements of $\RCinf$}
\label{sec:4}

A method of listing each of the elements of $\RCinf$ precisely once was obtained in the previous section.
This method was essentially a bijective map from $\fwdfxvset$ to~$\RCinf$.
Although such a description of~$\RCinf$ is more explicit than its original presentation as a connected component, it still does not enable one to distinguish an element of~$\RCinf$ from an arbitrary rigged configuration.
The goal of this section is to resolve this situation through a map from~$\RCinf$ to~$\fwdfxvset$.

Note that, by definition, a row of length zero, i.e., a non-existent row, in a rigged partition does not carry a rigging.
Hence, the bijectivity of the map $\fwdfxv \mapsto rc_{\fwdfxv}$ must mean that, when $\fwdlen{i}{k}$ is zero, the value $\fwdrig{i}{k}$, as given by~\eqref{eq:rig}, is redundant information.
Given an element of~$\RCinf$ whose $\fwdlen{i}{k}$ value is zero, for some meaningful choice of indices, it should, at the worst, be possible to compute the corresponding fake rigging $\fwdrig{i}{k}$ from other information of the rigged configuration.
The next lemma shows that the situation is actually much simpler.

\begin{lemma}
\label{lem:l2r1}
Let $\fwdfxv = (\fwdfx{i}{j})_{i,j}$ be a collection of non-negative integers that is weakly increasing with respect to the index~$i$, and let $\fwdnv = (\fwdn{i}{j}{k})_{i,j,k}$ be its extension specified by Algorithm~\ref{alg:1}.
If a $\fwdlen{i}{k}$ value, computed according to~\eqref{eq:len}, is zero, then the corresponding $\fwdrig{i}{k}$ value, computed according to~\eqref{eq:rig}, is also zero, for every meaningful choice of the indices.
\end{lemma}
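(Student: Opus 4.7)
The plan is to reduce the statement to Lemma~\ref{lem:f8s3k}, which has already done nearly all the work.

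First, I would note the bookkeeping identification $\fwdlen{i}{k} = \alen{i}{n-i-k+2}{k}$, coming straight from comparing~\eqref{eq:len} with the definition~\eqref{eq:ds07n}. Similarly $\fwdrig{i}{k} = -\fwdn{i}{n-i-k+2}{k-1} + \fwdn{i}{n-i-k+2}{k}$, so both conclusions we need (the two terms in $\fwdrig{i}{k}$ vanish) are exactly the conclusions of Lemma~\ref{lem:f8s3k}, applied with the index $j = n-i-k+2$.

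Next I would check that Lemma~\ref{lem:f8s3k} is actually applicable. Its hypothesis is that Claim~\ref{claim} is valid for the relevant family of $\alen{*}{*}{*}$ symbols, namely $\alen{i}{n-i-k+2}{k}$, $\alen{i}{n-i-k+1}{k+1}$, \dots, $\alen{i}{1}{n-i+1}$. These are precisely the row lengths of the $k$-th through $(n-i+1)$-th rows of the $i$-th rigged partition of the generic element of $\RCinf$, and by Theorem~\ref{thm:fwdgenA} (the conclusion of the preceding section) we are free to invoke Claim~\ref{claim} for any meaningful choice of indices with $j\neq 0$. All the indices involved here satisfy $j\geq 1$ and the range condition~\eqref{eq:ijkrange}, so Claim~\ref{claim} covers them.

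With those two observations in place, the proof reduces to a one-line argument: assuming $\fwdlen{i}{k} = \alen{i}{n-i-k+2}{k} = 0$, Lemma~\ref{lem:f8s3k} gives $\fwdn{i}{n-i-k+2}{k-1} = 0$ and $\fwdn{i}{n-i-k+2}{k} = 0$, whence $\fwdrig{i}{k} = 0$ by~\eqref{eq:rig}. There is no real obstacle here; the only thing to be careful about is the boundary case $k=n-i+1$, where one verifies that $\fwdn{i}{1}{k-1}$ and $\fwdn{i}{1}{k}$ are still indexed within~\eqref{eq:ijkrange}, and the degenerate case where the list of $\alen{*}{*}{*}$ values in Lemma~\ref{lem:f8s3k} consists of a single entry, which is handled by the same lemma.
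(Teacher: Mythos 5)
Your proposal is correct and matches the paper's own proof, which likewise observes $\fwdlen{i}{k} = \alen{i}{n-i-k+2}{k}$ and $\fwdrig{i}{k} = -\fwdn{i}{n-i-k+2}{k-1} + \fwdn{i}{n-i-k+2}{k}$ and then cites Lemma~\ref{lem:f8s3k} directly. Your extra check that Claim~\ref{claim} is available (via Theorem~\ref{thm:fwdgenA}) is exactly the justification the paper gives right after that theorem, so nothing is missing.
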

\begin{proof}
Since $\fwdlen{i}{k} = \alen{i}{n-i-k+2}{k}$ and $\fwdrig{i}{k} = -\fwdn{i}{n-i-k+2}{k-1} + \fwdn{i}{n-i-k+2}{k}$, this follows directly from Lemma~\ref{lem:f8s3k}.
\end{proof}

Given an arbitrary $rc_{\fwdfxv} \in\RCinf$, one can read off its $\fwdlen{i}{k}$ and $\fwdrig{i}{k}$ values, regardless of whether we know the corresponding $\fwdfxv$ value.
A certain $k$-th row of the $i$-th partition may be absent in $rc_{\fwdfxv}$, for a meaningful choice of the indices $i$ and~$k$, but the above lemma allows us to fill in even the corresponding $\fwdrig{i}{k}$ value.

\begin{algorithm}
 \For{$k=0,1,\dots,n-1$}{
   $j\leftarrow 1$\;
   $i\leftarrow n-j-k+1$\;
   $\fwdn{i}{j}{k} \leftarrow \fwdlen{i}{k+1}$\;
   \For{$j=2,3,\dots,n-k$}{
     $i \leftarrow n-j-k+1$\;
     $\fwdn{i}{j}{k} \leftarrow \fwdrig{i+1}{k+1} +\fwdn{i+1}{j-1}{k}
                      -\min\big\{0, \fwdlen{i+1}{k+1}-\fwdlen{i}{k+1}\big\}$\;
   }
 }
 \For{$t = 1, 2, ..., n-1$}{
   \For{$k = 0, 1, ..., n-t-1$}{
     $j \leftarrow 1$\;
     $i \leftarrow n-j-k+1-t$\;
     $\fwdn{i}{j}{k} \leftarrow \fwdlen{i}{k+1}
                      -\sum_{x=j+1}^{j+t} \fwdn{i}{x}{k}
                      +\sum_{x=j}^{j+t-1} \fwdn{i}{x}{k+1}$\;
     \For{$j = 2, 3, ..., n-k-t$}{
       $i \leftarrow n-j-k+1-t$\;
       $\fwdn{i}{j}{k} \leftarrow \fwdn{i+1}{j-1}{k+1}
               -\min\left\{ 0,
               \begin{aligned}
                 &\fwdlen{i+1}{k+1} -\fwdlen{i}{k+1}\\
                 &-\textstyle\sum_{x=j  }^{j+t-1} \fwdn{i+1}{x}{k} 
                  +\textstyle\sum_{x=j-1}^{j+t-2} \fwdn{i+1}{x}{k+1}\\
                 &+\textstyle\sum_{x=j+1}^{j+t  } \fwdn{i}{x}{k}
                  -\textstyle\sum_{x=j  }^{j+t-1} \fwdn{i}{x}{k+1}
               \end{aligned}
               \right\}$\;
     }
   }
 }
 \For{$j=1,2,\dots,n$}{
   \For{$i=1,2,\dots,n-j+1$}{
     $\fwdfx{i}{j} \leftarrow \fwdn{i}{j}{0}$\;
   }
 }
\caption{Recovery of $\fwdfxv = (\fwdfx{i}{j})_{i,j}$ from $\{\fwdlen{i}{k},\fwdrig{i}{k} \}_{i,k}$}\label{alg:2}
\end{algorithm}

The weakly increasing $\fwdfxv = (\fwdfx{i}{j})_{i,j}$ can now be recovered from the collection $\{ \fwdlen{i}{k}, \fwdrig{i}{k} \}_{i,k}$ through the process given by Algorithm~\ref{alg:2}, which is precisely the reversal of the composed $\fwdfxv \mapsto \fwdnv \mapsto \{\fwdlen{i}{k},\fwdrig{i}{k} \}_{i,k}$ mapping.
All the assignments given by Algorithm~\ref{alg:2} are equivalent to those made in Algorithm~\ref{alg:1}, \eqref{eq:len}, and~\eqref{eq:rig}, and the only difference is in the order of assignments, which has been carefully crafted so that all the terms appearing on the right-hand side of each assignment are available at that point.

Let us review the first two of the assignments as examples.
With the substitutions $j\leftarrow 1$ and $i \leftarrow n-j-k+1$, the very first assignment $\fwdn{i}{j}{k} \leftarrow \fwdlen{i}{k+1}$ of Algorithm~\ref{alg:2} may be written as
\begin{equation}
\fwdn{n-k}{1}{k} \leftarrow \fwdlen{n-k}{k+1},
\end{equation}
and this is precisely what we obtain when the substitutions $i \leftarrow n-k$ and $k \leftarrow k+1$ are made into~\eqref{eq:len}, with the only difference being that the left-hand and right-hand sides of the equality are interchanged.

The second assignment is
\begin{equation}
\fwdn{n-j-k+1}{j}{k}
 \leftarrow \fwdrig{n-j-k+2}{k+1} +\fwdn{n-j-k+2}{j-1}{k}
  -\min\left\{0,\fwdlen{n-j-k+2}{k+1}-\fwdlen{n-j-k+1}{k+1}\right\},
\end{equation}
with the appropriate substitutions.
Making the substitutions $i \leftarrow n-j-k+2$ and $k \leftarrow k+1$ into~\eqref{eq:rig} results in
\begin{equation}
\fwdrig{n-j-k+2}{k+1} = -\fwdn{n-j-k+2}{j-1}{k} + \fwdn{n-j-k+2}{j-1}{k+1},
\end{equation}
so that the above is equivalent to
\begin{equation}\label{eq:f23x}
\fwdn{n-j-k+1}{j}{k}
\leftarrow \fwdn{n-j-k+2}{j-1}{k+1}
 -\min\left\{0,\fwdlen{n-j-k+2}{k+1}-\fwdlen{n-j-k+1}{k+1}\right\}.
\end{equation}
Let us now work our way from the defining relation
\begin{equation}
\fwdn{i}{j}{k} = \min\left\{
  \fwdn{i-1}{j+1}{k-1},
  \begin{aligned}
    \textstyle
    \sum_{x=1}^{j  } \fwdn{i}{x}{k-1} 
   -\sum_{x=1}^{j-1} \fwdn{i}{x}{k}
   -\sum_{x=1}^{j  } \fwdn{i-1}{x}{k-1}
   +\sum_{x=1}^{j  } \fwdn{i-1}{x}{k}
  \end{aligned}
\right\}.
\end{equation}
With the substitutions $i \leftarrow n-j-k+2$, $j\leftarrow j-1$, and $k \leftarrow k+1$, followed by applications of~\eqref{eq:len}, this becomes
\begin{equation}
\fwdn{n-j-k+2}{j-1}{k+1}
= \min\left\{
  \fwdn{n-j-k+1}{j}{k},
    \fwdlen{n-j-k+2}{k+1}
   -\fwdlen{n-j-k+1}{k+1}
   +\fwdn{n-j-k+1}{j}{k}
  \right\},
\end{equation}
which may be written in the form
\begin{equation}
\fwdn{n-j-k+2}{j-1}{k+1}
= \fwdn{n-j-k+1}{j}{k}
 +\min\left\{ 0, \fwdlen{n-j-k+2}{k+1} -\fwdlen{n-j-k+1}{k+1} \right\},
\end{equation}
and this is equivalent to~\eqref{eq:f23x}.

The remaining two assignments may seem more complicated, but we have already shown all the techniques that are necessary in verifying that even these two are compatible with the $\fwdfxv \mapsto \{\fwdlen{i}{k},\fwdrig{i}{k} \}_{i,k}$ mapping.
Thus, a procedure for recovering $\fwdfxv$ from any given $rc_{\fwdfxv}\in\RCinf$ is now available to us, and the procedure does not require any applications of the Kashiwara operators.

The mapping $rc_{\fwdfxv} \mapsto \fwdfxv$ can actually be used to distinguish elements of~$\RCinf$ from other elements of the larger rigged configuration crystal.
Note that Algorithm~\ref{alg:2}, with the riggings set to zero for any of the missing rows, can be applied to any rigged configuration, regardless of whether it belongs to~$\RCinf$.
Suppose one is given an arbitrary rigged configuration.
If the $\fwdfxv$ computed by Algorithm~\ref{alg:2} for this rigged configuration belongs to~$\fwdfxvset$ and its image under the $\fwdfxv \mapsto rc_{\fwdfxv}$ mapping brings back the rigged configuration one started out with, then the element clearly belongs to~$\RCinf$.
On the other hand, since the mapping given by Algorithm~\ref{alg:2} is precisely the inverse of the $\fwdfxv \mapsto \{\fwdlen{i}{k},\fwdrig{i}{k} \}_{i,k}$ mapping for elements of $\RCinf$, every element of~$\RCinf$ will pass this test.

Note that verifying just the weakly increasing property of the~$\fwdfxv$ calculated from a given rigged configuration will not be sufficient, because we have no guarantee that the $\{\fwdlen{i}{k},\fwdrig{i}{k} \}_{i,k} \mapsto \fwdfxv$ mapping is injective on the larger crystal of all rigged configurations.
In fact, one can check that the value $\fwdrig{1}{1}$ is never used in Algorithm~\ref{alg:2}, so that the mapping is unlikely to be injective on the larger crystal.

\section{Isomorphism Between $\RCinf$ and $\Tinf$}

The description of~$\RCinf$ we gave in the previous sections was essentially a specialization of the bijection $\fwdfxv \mapsto \bftil^{\,\fwdfxv} \Binfv$ defined between $\fwdfxvset$ and~$\Binf$, given by Lemma~\ref{lem:bij}, into the bijection $\fwdfxv \mapsto rc_{\fwdfxv}$ defined between $\fwdfxvset$ and~$\RCinf$.
Let us explain that the marginally large tableau realization $\Tinf$ of~$\Binf$ also allows for the bijection $\fwdfxv \mapsto \bftil^{\,\fwdfxv} \Binfv$ to be expressed explicitly.

We acknowledge that the discussions of this section concerning $\Tinf$ are likely to have been known to the author of~\cite{Sai12}.
Although the paper does not involve $\Tinf$ explicitly, the arguments therein seem to be based on knowledge of what essentially amounts to the observations given below.

Given an $\fwdfxv = (\fwdfx{i}{j})_{i,j} \in\fwdfxvset$, let us define $\fwdtblm$ to be the unique marginally large tableau such that, for each $2\leq y \leq n+1$ and $1 \leq x \leq y-1$, the top $x$-many rows contain $\fwdfx{x}{n-y+2}$-many $y$-boxes.
For example, in the $A_3$-type case, we have
\begin{equation}\label{eq:fxT}
\fwdtblm = \
\raisebox{-1.45cm}{%
\includegraphics{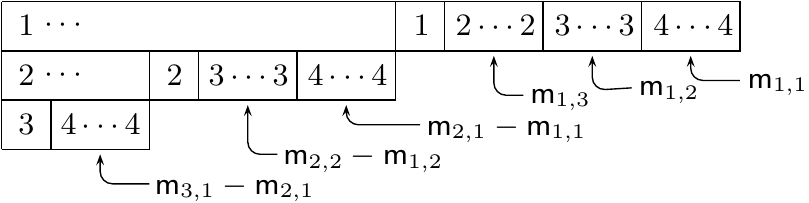}},
\end{equation}
where the number at the tail of each arrow is the count of blocks pointed to by the arrow.

\begin{proposition}
\label{prop:1}
Let $\fwdfxv = (\fwdfx{i}{j})_{i,j} \in\fwdfxvset$ be a collection of non-negative integers that is weakly increasing with respect to the index~$i$, and let $\Tinfv\in \Tinf$ be the highest weight element.
Then, the tableau $\bftil^{\,\fwdfxv} \Tinfv \in \Tinf$ is~$\fwdtblm$.
\end{proposition}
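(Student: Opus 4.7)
The plan is to compute $\bftil^{\,\fwdfxv} \Tinfv$ in the marginally large tableau realization by mirroring, wave by wave, the inductive argument of Section~4 that handled $\bftil^{\,\fwdfxv} \RCinfv$. I will prove by induction on $q \in \{0, 1, \ldots, n\}$ that
\[
T_q := \bftil^{\,\fwdfx{*}{q}} \cdots \bftil^{\,\fwdfx{*}{1}} \Tinfv
\]
is the marginally large tableau whose row~$x$ contains its default $x$-boxes together with $\fwdfx{x}{n-y+2} - \fwdfx{x-1}{n-y+2}$ extra boxes of value~$y$ for each $y$ with $n-q+2 \leq y \leq n+1$ and each $1 \leq x \leq y-1$ (using the convention $\fwdfx{0}{j} := 0$), and no other non-default entries. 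The base case $q=0$ is immediate since $T_0 = \Tinfv$ consists of the $n$ basic columns only, while the final case $q=n$ recovers $\fwdtblm$ exactly and establishes Proposition~\ref{prop:1}.

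The inductive step requires a sub-induction on the position $p$ inside the wave
\[
\bftil^{\,\fwdfx{*}{q}} = \ftil_{n-q+1}^{\,\fwdfx{n-q+1}{q}} \cdots \ftil_2^{\,\fwdfx{2}{q}} \ftil_1^{\,\fwdfx{1}{q}}.
\]
Writing $S_p := \ftil_p^{\,\fwdfx{p}{q}} \cdots \ftil_1^{\,\fwdfx{1}{q}} T_{q-1}$, the intermediate claim is that $S_p$ is obtained from $T_{q-1}$ by converting, for each $1 \leq x \leq p$, exactly $\fwdfx{x}{q} - \fwdfx{x-1}{q}$ boxes in row~$x$ from value $x$ up to value $p+1$ through the successive operators $\ftil_x, \ftil_{x+1}, \ldots, \ftil_p$, together with the basic-column insertions forced by marginal largeness. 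The passage $S_{p-1} \to S_p$ via $\ftil_p^{\,\fwdfx{p}{q}}$ decomposes naturally into two phases: the first $\fwdfx{p-1}{q}$ applications push the $p$-boxes currently sitting in rows $1, \ldots, p-1$ up to $(p+1)$-boxes, and the remaining $\fwdfx{p}{q} - \fwdfx{p-1}{q}$ applications, non-negative by the weakly increasing hypothesis on $\fwdfxv$, convert that many $p$-boxes in row~$p$. Taking $p = n-q+1$ then produces the claimed description of $T_q$.

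The principal obstacle is verifying the signature-rule claim behind this sub-induction, namely that the rightmost unmatched $p$-box in the tensor-product reading of $S_{p-1}$ is first drawn from rows $p-1, p-2, \ldots, 1$ (consuming the extras freshly created by $\ftil_{p-1}^{\,\fwdfx{p-1}{q}}$) and only afterwards from row~$p$. This must be checked against the full reading word of $S_{p-1}$, accounting for the $(p+1)$-boxes inherited from previous waves as well as those present by default in row $p+1$. The key computation is that every such pre-existing $(p+1)$-box pairs off, under the signature cancellation, with a $p$-box lying in row~$p$ in a position that must remain untouched, so that the only unmatched $p$'s are exactly the ones listed above. Once this pairing is settled, the basic-$p$-column insertions become routine accounting: each conversion of a row-$p$ $p$-box into a $(p+1)$-box destroys one basic $p$-column, and the marginal-large insertion restores it without affecting any of the higher-value entries laid down in earlier waves. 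Iterating across $q = 1, 2, \ldots, n$ then yields $T_n = \fwdtblm$ and completes the proof.
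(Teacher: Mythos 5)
Your plan follows essentially the same route as the paper, which disposes of Proposition~\ref{prop:1} in a single paragraph as ``a straightforward computational result,'' noting only that each wave $\bftil^{\,\fwdfx{*}{j}}$ lays down the boxes of one new value and that later waves cannot disturb them; your wave-by-wave induction with the sub-induction on $p$ is the honest expansion of that remark, and it does arrive at $\fwdtblm$. Two details in your signature analysis need repair, though neither changes the outcome. First, under the column reading used for these tableaux, $\ftil_p$ acts on the unmatched $p$ lying in the rightmost column, so within your first phase the freshly created extras are consumed in the order row~$1$, row~$2$, \dots, row~$p-1$, not row~$p-1$ down to row~$1$; this is immaterial, since all $\fwdfx{p-1}{q}$ of them are consumed before row~$p$ is touched. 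Second, it is not quite true that the pre-existing $(p+1)$-boxes cancel every row-$p$ $p$-box other than the basic-column one: a column of height greater than $p$ whose row-$(p+1)$ entry is an old extra (of value at least $n-q+3 > p+1$, inherited from an earlier wave) leaves its row-$p$ entry $p$ unmatched. You must therefore argue that phase two never touches these boxes --- which holds because they sit in the tallest columns and hence come last in the reading word, so the leftmost unmatched $p$ is always the (repeatedly re-inserted) basic $p$-column's entry; equivalently, converting one of them would violate weak increase along row~$p$. With those two points fixed, your argument is complete and coincides with the computation the paper asserts but does not write out.
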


This is a straightforward computational result that requires only the knowledge of the Kashiwara operator actions on the marginally large tableaux and no other special technique.
Given an $\fwdfxv\in\fwdfxvset$, the combined special structures of $\fwdfxv$ and $\bftil^{\,\fwdfxv}$ make the computation very easy.
For example, in the $A_3$-type crystal $\Tinf$, one can immediately realize that
\begin{equation}
\bftil^{\,\fwdfx{*}{1}} \Tinfv = \
\raisebox{-1.33cm}{%
\includegraphics{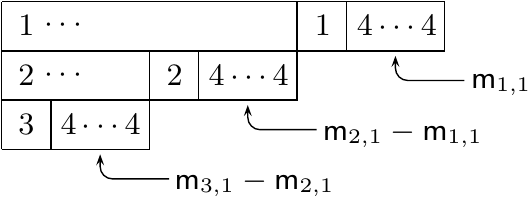}}.
\end{equation}
Further applications of $\bftil^{\,\fwdfx{*}{2}}$ and $\bftil^{\,\fwdfx{*}{3}}$ cannot disturb the $4$-blocks, and one can conclude that the end result must be~\eqref{eq:fxT}.

One can see, either through Lemma~\ref{lem:bij} or simply by reviewing the definition of~$\fwdtblm$, that every element of the crystal~$\Tinf$ may be expressed in the form~$\fwdtblm$ for some uniquely determined $\fwdfxv\in\fwdfxvset$.
Note that the bijection $\fwdfxv \mapsto \fwdtblm$ between $\fwdfxvset$ and~$\Tinf$ is so direct that neither direction of the bijection requires any meaningful amount of computation.

The following claim is now a consequence of having labeled the elements of both~$\RCinf$ and~$\Tinf$ through Lemma~\ref{lem:bij}.

\begin{theorem}
\label{thm:fwd}
The map that sends the tableau $\fwdtblm$ to the rigged configuration $rc_{\fwdfxv}$, for each $\fwdfxv\in\fwdfxvset$, is an isomorphism between $\Tinf$ and $\RCinf$.
\end{theorem}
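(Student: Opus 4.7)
The plan is to simply identify the proposed map as a composition of already established crystal isomorphisms, so that it inherits the isomorphism property. All the real work has been done: Theorem~\ref{thm:fwdgenA} establishes that $\fwdfxv \mapsto rc_{\fwdfxv}$ is precisely the map $\fwdfxv \mapsto \bftil^{\,\fwdfxv} \RCinfv$ restricted to $\fwdfxvset$, and Proposition~\ref{prop:1} establishes the analogous statement $\fwdfxv \mapsto \fwdtblm = \bftil^{\,\fwdfxv} \Tinfv$. Both labelings are therefore instances of the single $\Binf$-level labeling of Lemma~\ref{lem:bij}, transported into the two concrete realizations through their respective highest-weight elements.

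Concretely, I would proceed as follows. First, fix crystal isomorphisms $\iota_T : \Binf \to \Tinf$ and $\iota_{RC} : \Binf \to \RCinf$ sending $\Binfv$ to $\Tinfv$ and to $\RCinfv$, respectively. These exist by the isomorphism $\Tinf \cong \Binf$ recalled in the preliminary section and by Lemma~\ref{lem:2}. Since crystal morphisms commute with the Kashiwara operators, for every $\fwdfxv \in \fwdfxvset$ we have
\begin{equation*}
\iota_T(\bftil^{\,\fwdfxv} \Binfv) = \bftil^{\,\fwdfxv} \Tinfv = \fwdtblm,
\qquad
\iota_{RC}(\bftil^{\,\fwdfxv} \Binfv) = \bftil^{\,\fwdfxv} \RCinfv = rc_{\fwdfxv},
\end{equation*}
where the right-hand identities are Proposition~\ref{prop:1} and Theorem~\ref{thm:fwdgenA}. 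By Lemma~\ref{lem:bij}, as $\fwdfxv$ ranges over $\fwdfxvset$ the element $\bftil^{\,\fwdfxv} \Binfv$ ranges without repetition over all of $\Binf$, so $\iota_T$ and $\iota_{RC}$ are in particular bijections of the stated form onto $\Tinf$ and $\RCinf$.

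It follows immediately that the map $\fwdtblm \mapsto rc_{\fwdfxv}$ described in the theorem is nothing other than the composition $\iota_{RC} \circ \iota_T^{-1} : \Tinf \to \RCinf$. Being a composition of crystal isomorphisms, it is itself a crystal isomorphism, which is exactly the claim. There is no real obstacle here, since every ingredient is already in place; the only thing to double-check is that the labeling of elements by $\fwdfxv$ in the two realizations is genuinely compatible, and this is precisely the content of Proposition~\ref{prop:1} together with Theorem~\ref{thm:fwdgenA}. In particular, no additional verification of the commutation relations $\ftil_i \circ \Phi = \Phi \circ \ftil_i$ needs to be carried out by hand.
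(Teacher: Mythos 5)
Your proposal is correct and is essentially the paper's own argument: the paper justifies Theorem~\ref{thm:fwd} in one sentence as ``a consequence of having labeled the elements of both $\RCinf$ and $\Tinf$ through Lemma~\ref{lem:bij},'' which is precisely your composition $\iota_{RC}\circ\iota_T^{-1}$ with the two labelings supplied by Theorem~\ref{thm:fwdgenA} and Proposition~\ref{prop:1}. You have merely written out the details that the paper leaves implicit.
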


Explicit computations of the map $\fwdtblm \mapsto rc_{\fwdfxv}$ are made possible by Algorithm~\ref{alg:1}.
Furthermore, the inverse map $rc_{\fwdfxv} \mapsto \fwdtblm$ may be computed explicitly, even when the input element of~$\RCinf$ is presented without its corresponding~$\fwdfxv$ value, using Algorithm~\ref{alg:2}.

\section{Crystals $\Bla$ as a Subset of $\RCinf$}

In this section, we describe the irreducible highest weight crystal $\Bla$ through the description of $\RCinf$ obtained in the previous section.

Given $\la\in P^+$, we define $\fwdfxvset^{\la}$ to be the set of $\fwdfxv = (\fwdfx{i}{j})_{i,j} \in\fwdfxvset$ satisfying the condition
\begin{align}
\sum_{x=1}^{n-j+1} (\fwdfx{i}{x} - \fwdfx{i-1}{x})
\leq \lambda(h_i) + \sum_{x=1}^{n-j} (\fwdfx{i+1}{x} - \fwdfx{i}{x}),
\end{align}
for each pair of indices $1\leq i\leq j\leq n$, where all $\fwdfx{0}{x}$ are interpreted as zeros.
Recall from Lemma~\ref{lem:cryimb} that a realization of~$\Bla$ may be obtained by describing the image of a certain strict crystal embedding.
The following is a restatement of a result from~\cite{Lee14} in the language used by this paper.

\begin{lemma}
For each $\la \in P^+$, the image of the strict crystal embedding
$\Bla \hookrightarrow \Tinf \otimes \Tvla$
that maps $b_\lambda$ to $\Tinfv \otimes \tvla$ is
$\left\{ \fwdtblm \otimes \tvla \,\left|\, \fwdfxv \in \fwdfxvset^{\la} \right.\right\}$.
\end{lemma}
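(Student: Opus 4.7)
My plan is to identify the image as a connected component of $\Tinf \otimes \Tvla$ and then translate its membership condition into the explicit inequalities defining $\fwdfxvset^\la$ via the tensor product rule for crystals.

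First, since $\etil_i(\Tinfv) = 0$ and $\varphi_i(\Tinfv) = 0 \geq -\lambda(h_i) = \varepsilon_i(\tvla)$, the tensor product rule gives $\etil_i(\Tinfv \otimes \tvla) = 0$ for every $i$, so $\Tinfv \otimes \tvla$ is a highest weight element of $\Tinf \otimes \Tvla$. Combining Lemma~\ref{lem:cryimb} with the isomorphism $\Binf \cong \Tinf$, the image of the embedding is the connected component of $\Tinf \otimes \Tvla$ containing $\Tinfv \otimes \tvla$. By Proposition~\ref{prop:1}, every element of $\Tinf$ has the form $\bftil^{\,\fwdfxv} \Tinfv = \fwdtblm$ for a unique $\fwdfxv \in \fwdfxvset$, and by the tensor product rule $\ftil_i^{m}(b \otimes \tvla)$ stays non-zero precisely when $m \leq \varphi_i(b) + \lambda(h_i)$. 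Hence the image consists of those $\fwdtblm \otimes \tvla$ for which
\begin{equation*}
\fwdfx{i'}{j'} \leq \varphi_{i'}(b_{i',j'}) + \lambda(h_{i'}) \qquad \text{at every meaningful $(i',j')$,}
\end{equation*}
where
\begin{equation*}
b_{i',j'} = \left(\ftil_{i'-1}^{\,\fwdfx{i'-1}{j'}} \cdots \ftil_1^{\,\fwdfx{1}{j'}}\right) \bftil^{\,\fwdfx{*}{j'-1}} \cdots \bftil^{\,\fwdfx{*}{1}} \Tinfv
\end{equation*}
is the intermediate tableau encountered just before the application of $\ftil_{i'}^{\,\fwdfx{i'}{j'}}$.

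To evaluate each $\varphi_{i'}(b_{i',j'})$, I would invoke the Berenstein-Zelevinsky string property for the reduced expression~\eqref{eq:redexp}: since $\fwdfxv$ is the string parameterization of $\bftil^{\,\fwdfxv} \Tinfv$ for this reduced word, the intermediate state satisfies $\varepsilon_{i'}(b_{i',j'}) = 0$. Consequently $\varphi_{i'}(b_{i',j'}) = \operatorname{wt}(b_{i',j'})(h_{i'})$, which is read off by counting how many $\ftil_a$ have been applied to $\Tinfv$ so far, for each neighboring $a$. Substituting these counts into $\operatorname{wt}(h_{i'}) = N_{i'-1} + N_{i'+1} - 2 N_{i'}$ yields
\begin{equation*}
\varphi_{i'}(b_{i',j'}) = \sum_{x=1}^{j'} \fwdfx{i'-1}{x} + \sum_{x=1}^{j'-1} \fwdfx{i'+1}{x} - 2\sum_{x=1}^{j'-1} \fwdfx{i'}{x},
\end{equation*}
with the conventions $\fwdfx{0}{x} = 0$ and $\fwdfx{n+1}{x} = 0$. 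Under the reindexing $(i,j) = (i', n-j'+1)$, the step inequality then rearranges into
\begin{equation*}
\sum_{x=1}^{n-j+1} \bigl(\fwdfx{i}{x} - \fwdfx{i-1}{x}\bigr) \leq \lambda(h_i) + \sum_{x=1}^{n-j} \bigl(\fwdfx{i+1}{x} - \fwdfx{i}{x}\bigr),
\end{equation*}
which is precisely the inequality defining $\fwdfxvset^\la$.

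The main obstacle will be justifying the Berenstein-Zelevinsky string property $\varepsilon_{i'}(b_{i',j'}) = 0$ in this specific setting, either by appeal to the general theory applied to the reduced word~\eqref{eq:redexp} or by a direct inductive analysis of the intermediate marginally large tableaux. Once this property is in hand, the weight-count formula and the bookkeeping that matches the two systems of inequalities are essentially mechanical, and the identification of the image with $\{\, \fwdtblm \otimes \tvla : \fwdfxv \in \fwdfxvset^\la \,\}$ follows.
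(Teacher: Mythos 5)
The paper does not prove this lemma at all: it introduces it as ``a restatement of a result from~\cite{Lee14}'' and the citation is the entire justification, so there is no internal proof to compare yours against and it must be judged on its own. Your route---identify the image with the connected component containing $\Tinfv\otimes\tvla$, observe that the $\Tvla$ factor only ever imposes the survival condition $m\leq\varphi_i(b)+\la(h_i)$, and evaluate $\varphi_{i'}$ at each intermediate stage of the formatted sequence via $\varepsilon_{i'}(b_{i',j'})=0$ and a weight count---is sound, and your bookkeeping is correct: the number of $\ftil_a$ applied before the stage $(i',j')$ is $\sum_{x=1}^{j'}\fwdfx{i'-1}{x}$, $\sum_{x=1}^{j'-1}\fwdfx{i'}{x}$, $\sum_{x=1}^{j'-1}\fwdfx{i'+1}{x}$ for $a=i'-1,i',i'+1$, and the resulting inequality is exactly the paper's defining condition for $\fwdfxvset^{\la}$ under $j'=n-j+1$, with the index ranges matching as well. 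Two points must still be made explicit. The first you flag yourself: that $\fwdfxv$ is genuinely the string parametrization of $\fwdtblm$ for the reduced word~\eqref{eq:redexp}, so that $\varepsilon_{i'}(b_{i',j'})=0$; this is the content of the Berenstein--Zelevinsky result behind Lemma~\ref{lem:bij} and can also be verified directly on marginally large tableaux. The second you do not flag: your argument as written only shows that the surviving formatted sequences land in the image, whereas the lemma also needs the reverse inclusion, namely that every element of the connected component equals $\bftil^{\,\fwdfxv}(\Tinfv\otimes\tvla)$ for the same $\fwdfxv$ labelling its first tensor factor. This requires knowing that the string parametrization of $\fwdtblm\otimes\tvla$ computed inside the component coincides with that of $\fwdtblm$ in $\Tinf$; it does, because $\etil_i(b\otimes\tvla)$ is either $0$ or $\etil_i b\otimes\tvla$, so on the image one gets $\varepsilon_i(b\otimes\tvla)=\varepsilon_i(b)$ and the peeling process stays inside the component all the way up to $\Tinfv\otimes\tvla$. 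Both gaps are standard and fillable, so your proposal is a correct proof outline of a statement the paper merely cites.
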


Since we know from Theorem~\ref{thm:fwd} that the mapping $\fwdtblm \mapsto rc_{\fwdfxv}$ is a crystal isomorphism between $\Tinf$ and $\RCinf$ that maps $\Tinfv$ to~$\RCinfv$, the following is now evident.

\begin{theorem}
\label{thm:RCla}
For each $\la \in P^+$, the image of the strict crystal embedding
$\Bla \hookrightarrow \RCinf \otimes \Tvla$
that maps $b_\lambda$ to $\RCinfv \otimes \tvla$ is
$\left\{ rc_{\fwdfxv} \otimes \tvla \,\left|\, \fwdfxv \in \fwdfxvset^{\la} \right.\right\}$.
\end{theorem}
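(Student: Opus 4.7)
The plan is to leverage Theorem~\ref{thm:fwd} together with the preceding Lemma (the $\Tinf$-version of the statement) and the uniqueness part of Lemma~\ref{lem:cryimb}. Because the bulk of the analytic work has already been done in establishing the isomorphism $\Tinf \cong \RCinf$, the present theorem should follow by pure ``transport of structure.''

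First I would observe that Theorem~\ref{thm:fwd} supplies a crystal isomorphism $\Phi\colon \Tinf \to \RCinf$ sending $\fwdtblm$ to $rc_{\fwdfxv}$, and in particular sending $\Tinfv$ (the marginally large tableau made of basic columns, corresponding to $\fwdfxv = 0$) to $\RCinfv$. Tensoring with the identity on the one-element crystal $\Tvla$ yields a crystal isomorphism
\begin{equation*}
\Phi \otimes \mathrm{id}\colon \Tinf \otimes \Tvla \;\longrightarrow\; \RCinf \otimes \Tvla,
\end{equation*}
which maps $\Tinfv \otimes \tvla$ to $\RCinfv \otimes \tvla$ and, by the explicit formula for $\Phi$, sends each $\fwdtblm \otimes \tvla$ to $rc_{\fwdfxv} \otimes \tvla$.

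Next I would compose the strict embedding $\iota_T\colon \Bla \hookrightarrow \Tinf \otimes \Tvla$ supplied by Lemma~\ref{lem:cryimb} (with $\Binf$ replaced by $\Tinf$) with the isomorphism $\Phi \otimes \mathrm{id}$ to obtain a strict crystal embedding $\Bla \hookrightarrow \RCinf \otimes \Tvla$ that sends $b_\la$ to $\RCinfv \otimes \tvla$. By the uniqueness clause of Lemma~\ref{lem:cryimb}, this composition coincides with the embedding $\iota_{RC}\colon \Bla \hookrightarrow \RCinf \otimes \Tvla$ appearing in the statement of the theorem. Hence $\mathrm{Im}(\iota_{RC}) = (\Phi \otimes \mathrm{id})\bigl(\mathrm{Im}(\iota_T)\bigr)$.

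Finally, I would invoke the preceding Lemma, which identifies $\mathrm{Im}(\iota_T) = \{\fwdtblm \otimes \tvla \mid \fwdfxv \in \fwdfxvset^{\la}\}$, and apply $\Phi \otimes \mathrm{id}$ elementwise to obtain $\{rc_{\fwdfxv} \otimes \tvla \mid \fwdfxv \in \fwdfxvset^{\la}\}$, as required. There is no real obstacle here: the proof is essentially a one-line application of ``isomorphic crystals have isomorphic strict embeddings of $\Bla$,'' and the only point worth articulating carefully is the appeal to uniqueness in Lemma~\ref{lem:cryimb} that justifies identifying the composed embedding with the one singled out in the theorem statement.
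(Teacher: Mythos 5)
Your proposal is correct and follows the same route as the paper, which simply notes that the isomorphism $\fwdtblm \mapsto rc_{\fwdfxv}$ of Theorem~\ref{thm:fwd} maps $\Tinfv$ to $\RCinfv$ and then declares the result ``evident'' given the preceding lemma on the image in $\Tinf \otimes \Tvla$. Your write-up merely makes explicit the transport-of-structure and uniqueness steps that the paper leaves implicit.
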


Thus, we have a new realization of the irreducible highest weight crystal~$\Bla$.

\section{Reverse Tableau Approach}

The developments of the previous sections were naturally connected to the structure of~$\Tinf$, the marginally large tableau realization of~$\Binf$.
In this section, we will present analogues that are connected to~$\Rinf$, the marginally large \emph{reverse} tableau realization of~$\Binf$.
We will be very brief and present just the main facts.

Let us consider a collection of non-negative integers $\rvsfxv = (\rvsfx{i}{j})_{i,j}$, where the indices span over the range $1 \leq j \leq n$ and $j \leq i \leq n$.
We will say that such an $\rvsfxv$ is \emph{weakly decreasing} with respect to the index~$i$, if it satisfies
\begin{equation*}
\rvsfx{i}{j} \geq \rvsfx{i+1}{j},
\end{equation*}
for every meaningful choice of the indices~$i$ and~$j$, and the set of all weakly decreasing~$\rvsfxv$ will be denoted by~$\rvsfxvset$.

To each collection $\rvsfxv$ of non-negative integers, we associate the product of lowering Kashiwara operators
$\bftil^{\,\rvsfxv} = 
  \bftil^{\,\rvsfx{*}{n}}
  \cdots
  \bftil^{\,\rvsfx{*}{2}}
  \bftil^{\,\rvsfx{*}{1}}$,
where each
$\bftil^{\,\rvsfx{*}{j}} =
  \ftil_{j}^{\,\rvsfx{j}{j}}
  \ftil_{j+1}^{\,\rvsfx{j+1}{j}}
  \cdots
  \ftil_{n}^{\,\rvsfx{n}{j}}$.

Given an $\rvsfxv = (\rvsfx{i}{j})_{i,j} \in\rvsfxvset$, we define $\rvstblm$ to be the marginally large \emph{reverse} tableau that is a natural generalization to the $A_n$-type of the following reverse tableau drawn for the $A_3$-type.
\begin{equation}
\label{eq:mlrt}
\rvstblm = \
\raisebox{-1.4cm}{\includegraphics{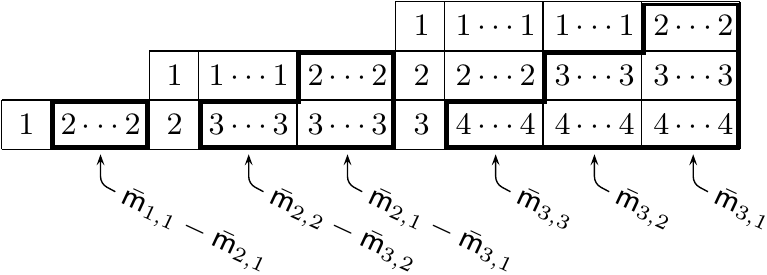}}
\end{equation}
A formal description of $\rvstblm$ can be given as follows.
Consider the general form of a marginally large reverse tableau together with the highest weight reverse tableau of the same shape (which is usually not marginally large).
When boxes whose entries agree on the two reverse tableaux are removed from the marginally large tableau, we are left with $n$ separate reverse tableaux.
In the above $A_3$-type reverse tableau, the three leftover parts are marked with thick lines.
The $\rvstblm$ is such that, for each $1\leq x \leq n$ and $1 \leq y \leq x$, the $y$-th row from the bottom of the $x$-th part from the left consists of $\big\{\sum_{j=1}^{x-y+1} (\rvsfx{x}{j}-\rvsfx{x+1}{j})\big\}$-many $(x-y+2)$-boxes, where we are setting $\rvsfx{n+1}{j} = 0$.

\begin{proposition}
\label{prop:2}
Let $\rvsfxv = (\rvsfx{i}{j})_{i,j} \in\rvsfxvset$ be a collection of non-negative integers that is weakly decreasing with respect to the index~$i$, and let $\Rinfv\in \Rinf$ be the highest weight element.
Then, $\bftil^{\,\rvsfxv} \Rinfv \in \Rinf$ is the marginally large reverse tableau $\rvstblm$.
\end{proposition}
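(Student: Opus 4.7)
The plan is to follow the strategy that the paper sketches for Proposition~\ref{prop:1} and perform a direct computation of $\bftil^{\,\rvsfxv} \Rinfv$ step by step, verifying at each stage that the intermediate reverse tableau matches the portion of~$\rvstblm$ that the operators applied so far are supposed to produce. The computation is organized as a nested induction on the wave index~$j$ running from~$1$ to~$n$, and, within each wave, on the Kashiwara operator index~$i$, which inside $\bftil^{\,\rvsfx{*}{j}}$ runs from~$n$ down to~$j$.

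First I would handle the base case $j=1$, starting with $\Rinfv$ (the marginally large reverse tableau consisting only of the $n$ basic columns) and applying the rightmost operator $\ftil_n^{\,\rvsfx{n}{1}}$. A single $\ftil_n$-action on a marginally large reverse tableau is easily seen either to promote an $n$-entry in the appropriate position to $n+1$ or, when no such entry is available, to trigger the insertion of a basic $n$-column so that marginality is preserved. The subsequent operators $\ftil_{n-1}^{\,\rvsfx{n-1}{1}}, \ldots, \ftil_1^{\,\rvsfx{1}{1}}$ are then analyzed in order, and the weakly decreasing condition $\rvsfx{i}{1} \geq \rvsfx{i+1}{1}$ guarantees that each $\ftil_i^{\,\rvsfx{i}{1}}$ finds exactly the right number of $i$-entries to promote, producing precisely the pattern of boxes that $\rvstblm$ prescribes for~$j=1$. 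For the induction step, assuming that the first $j-1$ waves have transformed $\Rinfv$ into the partial reverse tableau predicted by $\rvstblm$, I would verify that $\bftil^{\,\rvsfx{*}{j}}$ extends this agreement to include the $j$-th layer. Crucially, the operators inside the $j$-th wave carry indices $\geq j$, so they cannot affect entries that have already been promoted above that threshold during previous waves; the new action thus builds up cleanly on top of the existing structure, and the count $\rvsfx{x}{j}-\rvsfx{x+1}{j}$ of fresh boxes in each appropriate row is non-negative by the weakly decreasing hypothesis.

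The main obstacle I anticipate is the bookkeeping around marginal-largeness: each $\ftil_i$-action may either merely promote an existing entry or first insert a basic $i$-column, and in a reverse-tableau layout this insertion happens on the side opposite to the ordinary-tableau case, so one needs to be careful about positions. However, the match between the index sequence of $\bftil^{\,\rvsfxv}$ — which encodes a reduced expression for the longest element of the $A_n$-type Weyl group, taken in the order opposite to~\eqref{eq:redexp} — and the staircase arrangement of rows within the parts of $\rvstblm$, together with the weakly decreasing hypothesis on~$\rvsfxv$, forces every such insertion to occur in a predictable place. The computation, although somewhat tedious, is then entirely mechanical and requires no special technique beyond the definition of the Kashiwara operator action on marginally large reverse tableaux, exactly paralleling the situation for Proposition~\ref{prop:1}.
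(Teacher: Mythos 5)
Your overall strategy---a direct, wave-by-wave computation of $\bftil^{\,\rvsfxv}\Rinfv$ checked against the explicit description of~$\rvstblm$---is exactly what the paper intends: its entire proof of this proposition is the remark that, as with Proposition~\ref{prop:1}, the result is a straightforward computation requiring only the Kashiwara operator actions on marginally large reverse tableaux. So in approach you and the paper agree, and most of your outline is fine.

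However, the one structural claim you single out as ``crucial'' does not transfer from the ordinary-tableau case to the reverse case. For Proposition~\ref{prop:1} the non-interference argument really is index-based: wave~$j$ uses only $\ftil_1,\dots,\ftil_{n-j+1}$, while the boxes finalized by earlier waves carry entries at least $n-j+3$, so no operator occurring in wave~$j$ has the index needed to touch them. In the reverse setting, wave~$j$ uses $\ftil_j,\dots,\ftil_n$, and the boxes finalized by earlier waves include entries of every value from $j'+1$ up to $n+1$ for $j'<j$. For instance, the first wave ends with $\ftil_1^{\,\rvsfx{1}{1}}$, producing $\rvsfx{1}{1}$-many $2$-boxes, and by the telescoping sum in the definition of~$\rvstblm$ exactly $\rvsfx{1}{1}$-many $2$-boxes must survive to the end; yet the second wave ends with $\ftil_2^{\,\rvsfx{2}{2}}$, whose whole job is to convert $2$-entries to $3$-entries. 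So ``operators with indices $\geq j$ cannot affect entries already promoted in previous waves'' is false as stated. What actually protects the earlier boxes is their position in the reverse tableau: the signature (bracketing) rule directs each new $\ftil_i$-action to a $2$-entry (respectively $i$-entry) sitting in a basic column or in the layer currently under construction, never to the already-completed layers. Your induction step needs this positional argument supplied in place of the index-range argument; with that repaired, the remaining bookkeeping is the mechanical verification you describe and matches the paper's intent.
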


As was with Proposition~\ref{prop:1}, this is a straightforward computational result that requires only the knowledge of the Kashiwara operator actions on the marginally large reverse tableaux.

It is clear from the definition of $\rvstblm$ that any marginally large reverse tableau $\rvstbl \in\Rinf$ may be written in the form $\rvstbl = \rvstblm$, for some $\rvsfxv\in\rvsfxvset$ that is uniquely determined by~$\rvstbl$.
Since the set of marginally large reverse tableaux represents each and every element of $\Binf\cong\Rinf$ exactly once, Proposition~\ref{prop:2} implies the following.

\begin{lemma}
\label{lem:bijR}
Let $\Binfv$ be the highest weight element of~$\Binf$.
The function that maps~$\rvsfxv$ to~$\bftil^{\,\rvsfxv} \Binfv$ is a bijection from~$\rvsfxvset$ to~$\Binf$.
\end{lemma}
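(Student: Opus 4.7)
The plan is to derive this lemma as an immediate corollary of Proposition~\ref{prop:2} together with the known crystal isomorphism $\Rinf \cong \Binf$ stated earlier in the paper. Under this isomorphism the highest weight element $\Rinfv$ corresponds to $\Binfv$, and because crystal isomorphisms intertwine the Kashiwara operators, the element $\bftil^{\,\rvsfxv}\Binfv$ corresponds to $\bftil^{\,\rvsfxv}\Rinfv$. By Proposition~\ref{prop:2} the latter equals $\rvstblm$. Thus the map $\rvsfxv \mapsto \bftil^{\,\rvsfxv}\Binfv$ factors, via the isomorphism, as the composition of $\rvsfxv \mapsto \rvstblm$ with the isomorphism $\Rinf \to \Binf$, and the lemma reduces to showing that $\rvsfxv \mapsto \rvstblm$ is a bijection between $\rvsfxvset$ and $\Rinf$.

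For injectivity, I would inspect the definition of $\rvstblm$: each $\rvsfxv\in\rvsfxvset$ prescribes, for every $1\leq x \leq n$ and $1 \leq y \leq x$, the number of $(x-y+2)$-boxes in the $y$-th row from the bottom of the $x$-th leftover block as $\sum_{j=1}^{x-y+1}(\rvsfx{x}{j}-\rvsfx{x+1}{j})$. These linear relations in the unknowns $\rvsfx{i}{j}$ form a triangular system (with the boundary convention $\rvsfx{n+1}{j}=0$), so $\rvsfxv$ is uniquely recovered from $\rvstblm$ by back substitution. In particular, different choices of $\rvsfxv$ yield different reverse tableaux $\rvstblm$.

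For surjectivity, take an arbitrary $\rvstbl\in\Rinf$. By definition $\rvstbl$ contains exactly one basic $i$-column for each $1\leq i\leq n$, so removing from $\rvstbl$ the boxes that agree with the highest weight reverse tableau of the same shape leaves $n$ separate reverse subtableaux of the structural form displayed in~\eqref{eq:mlrt}. Reading off the multiplicity of each entry in each of these subtableaux and solving the same triangular system in reverse produces non-negative integers $\rvsfx{i}{j}$. The semi-standardness of $\rvstbl$ forces the resulting collection to satisfy $\rvsfx{i}{j}\geq\rvsfx{i+1}{j}$, so that $\rvsfxv\in\rvsfxvset$, and by construction $\rvstblm=\rvstbl$.

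The only step requiring any care is checking that the semi-standard condition on $\rvstbl$ is precisely equivalent to the weakly decreasing property defining $\rvsfxvset$, since the row-length formulas contain differences $\rvsfx{x}{j}-\rvsfx{x+1}{j}$ that must be non-negative; this is essentially bookkeeping, not a genuine obstacle. Once it is confirmed, the bijection $\rvsfxv \leftrightarrow \rvstblm$ on $\Rinf$ together with the crystal isomorphism $\Rinf\cong\Binf$ immediately yields the claimed bijection from $\rvsfxvset$ to $\Binf$.
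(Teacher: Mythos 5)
Your proposal is correct and follows essentially the same route as the paper: the paper likewise combines Proposition~\ref{prop:2} with the isomorphism $\Rinf\cong\Binf$ and the observation that every $\rvstbl\in\Rinf$ equals $\rvstblm$ for a uniquely determined $\rvsfxv\in\rvsfxvset$. The only difference is that the paper declares the bijectivity of $\rvsfxv\mapsto\rvstblm$ to be ``clear from the definition,'' whereas you spell out the triangular back-substitution and the equivalence of semi-standardness with the weakly decreasing condition.
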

This is the analogue of Lemma~\ref{lem:bij} that corresponds to the reduced expression
\begin{equation}
(s_n)(s_{n-1} s_n) \cdots\cdots (s_2 s_3\cdots s_n)(s_1 s_2 \cdots s_n)
\end{equation}
for the longest Weyl group element, just as Lemma~\ref{lem:bij} relates to~\eqref{eq:redexp}.
We acknowledge that it should be possible to derive this result from Lemma~\ref{lem:bij} through certain computations developed in~\cite{BerZel93,BerZel96}.
However, we found it easier to obtain this directly from the marginally large reverse tableau realization of~$\Binf$.

\begin{algorithm}
 \For{$j=1,2,\dots,n$}{
   \For{$i=j,j+1,\dots,n$}{
     $\rvsn{i}{j}{0} \leftarrow \rvsfx{i}{j}$\;
   }
 } 
 \For{$k=1,2,\dots,n$}{
   \For{$j = 1,2,\dots,n-k+1$}{
     $\rvsn{n}{j}{k} \leftarrow 0$\;
     \For{$i = n-1,n-2,\dots,j+k-1$}{
       $\rvsn{i}{j}{k} \leftarrow \min\left\{
          \rvsn{i+1}{j+1}{k-1},
          \begin{aligned}
            \textstyle\sum_{x=1}^{j  } \rvsn{i}{x}{k-1} 
           -\textstyle\sum_{x=1}^{j  } \rvsn{i+1}{x}{k-1}
           -\textstyle\sum_{x=1}^{j-1} \rvsn{i}{x}{k}
           +\textstyle\sum_{x=1}^{j  } \rvsn{i+1}{x}{k}
          \end{aligned}
       \right\}$\;
     }
   }
 }
\caption{Generation of $\rvsnv = (\rvsn{i}{j}{k})_{i,j,k}$ from $\rvsfxv = (\rvsfx{i}{j})_{i,j}$}\label{alg:3}
\end{algorithm}

The extension $\rvsnv = (\rvsn{i}{j}{k})_{i,j,k}$ of a given $\rvsfxv = (\rvsfx{i}{j})_{i,j}\in\rvsfxvset$ is defined through Algorithm~\ref{alg:3}.
Given an $\rvsfxv\in\rvsfxvset$, we define $rc_{\rvsfxv}$ to be the rigged configuration whose $k$-th row (from the top) of its $i$-th rigged partition is of length
\begin{align}
\label{eq:len2}
\rvslen{i}{k} &= \sum_{j=1}^{i-k+1} \rvsn{i}{j}{k-1} - \sum_{j=1}^{i-k} \rvsn{i}{j}{k}\\
\intertext{and has rigging}
\label{eq:rig2}
\rvsrig{i}{k} &= -\rvsn{i}{i-k+1}{k-1} + \rvsn{i}{i-k+1}{k},
\end{align}
for each $1\leq i \leq n$ and $1 \leq k \leq i$.
The height of the $i$-th partition is bounded by $\min\{i,n-i+1\}$.
The $A_4$-type rigged configuration $rc_{\rvsfxv}$ is provided below as an example.
\begin{center}
\includegraphics{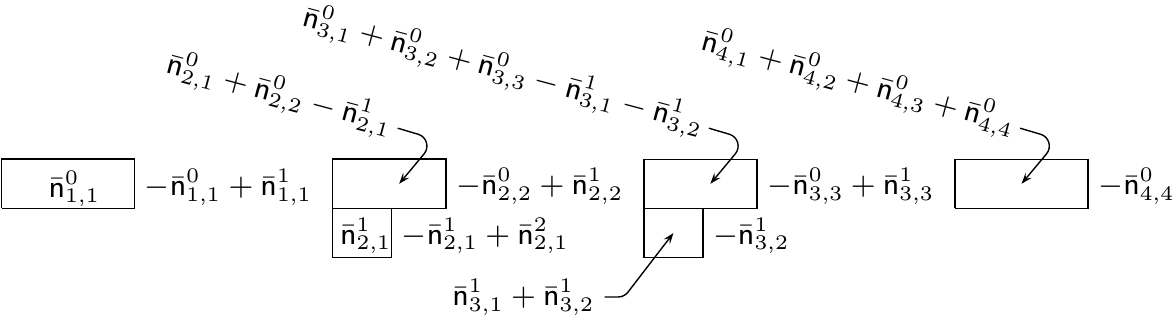}
\end{center}
\begin{align*}
\rvsn{3}{1}{1} &= \min\big\{
   \rvsn{4}{2}{0},
   (\rvsn{3}{1}{0})
  -(\rvsn{4}{1}{0})
\big\}\\
\rvsn{2}{1}{1} &= \min\big\{
   \rvsn{3}{2}{0},
   (\rvsn{2}{1}{0})
  -(\rvsn{3}{1}{0}-\rvsn{3}{1}{1})
\big\}\\
\rvsn{1}{1}{1} &= \min\big\{
   \rvsn{2}{2}{0},
   (\rvsn{1}{1}{0})
  -(\rvsn{2}{1}{0}-\rvsn{2}{1}{1})
\big\}\\
\rvsn{3}{2}{1} &= \min\big\{
   \rvsn{4}{3}{0}, 
   (\rvsn{3}{1}{0}+\rvsn{3}{2}{0}-\rvsn{3}{1}{1})
  -(\rvsn{4}{1}{0}+\rvsn{4}{2}{0})
\big\}\\
\rvsn{2}{2}{1} &= \min\big\{
   \rvsn{3}{3}{0},
   (\rvsn{2}{1}{0}+\rvsn{2}{2}{0}-\rvsn{2}{1}{1})
  -(\rvsn{3}{1}{0}+\rvsn{3}{2}{0}-\rvsn{3}{1}{1}-\rvsn{3}{2}{1})
\big\}\\
\rvsn{3}{3}{1} &= \min\big\{
   \rvsn{4}{4}{0},
   (\rvsn{3}{1}{0}+\rvsn{3}{2}{0}+\rvsn{3}{3}{0}-\rvsn{3}{1}{1}-\rvsn{3}{2}{1})
  -(\rvsn{4}{1}{0}+\rvsn{4}{2}{0}+\rvsn{4}{3}{0})
\big\}\\
\rvsn{2}{1}{2} &= \min\big\{
   \rvsn{3}{2}{1},
   (\rvsn{2}{1}{1})
  -(\rvsn{3}{1}{1})
\big\}
\end{align*}

As with Theorem~\ref{thm:fwdgenA}, the following is mostly a computational result, with Lemma~\ref{lem:bijR} taking the role of Lemma~\ref{lem:bij}.

\begin{theorem}
\label{thm:4}
Let $\rvsfxv = (\rvsfx{i}{j})_{i,j}$ be a collection of non-negative integers that is weakly decreasing with respect to the index~$i$, and let $\rvsnv = (\rvsn{i}{j}{k})_{i,j,k}$ be its extension specified by Algorithm~\ref{alg:3}.
Then, $\bftil^{\,\rvsfxv} \RCinfv$ is equal to $rc_{\rvsfxv}$.
Each element of $\RCinf$ may be written uniquely in the form $rc_{\rvsfxv}$, for some weakly decreasing $\rvsfxv\in\rvsfxvset$.
\end{theorem}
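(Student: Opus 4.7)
The plan is to mirror, almost step for step, the proof of Theorem~\ref{thm:fwdgenA} given in Section~4, substituting the reverse-case objects throughout. The underlying symmetry is the $A_n$-type Dynkin diagram involution $i \mapsto n-i+1$: the boundary partition in Algorithm~\ref{alg:3} is the $n$-th one (where $\rvsn{n}{j}{k}=0$) rather than the first, the weakly decreasing condition on~$\rvsfxv$ takes the place of the weakly increasing condition on~$\fwdfxv$, and within each wave $\bftil^{\,\rvsfx{*}{j}}$ the Kashiwara operators are applied with indices running from $n$ down to $j$ rather than from $1$ up to $n-j+1$.

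First I would establish the reverse analogues of Lemmas~\ref{lem:fwdD1}--\ref{lem:fwdC}: namely, $\rvsn{i}{j}{k} \leq \rvsn{i+1}{j+1}{k-1}$ (immediate from Algorithm~\ref{alg:3}), $\rvsn{i}{j}{k} \leq \rvsn{i-1}{j}{k}$ (by induction on~$k$, paralleling Lemma~\ref{lem:fwdD2}), $\rvsn{i}{j}{k} \leq \rvsn{i}{j+1}{k-1}$ (by combining the first two), and $\rvsn{i}{j}{k}=0$ whenever $n-i+1 \leq k$ (by induction on~$k$, paralleling Lemma~\ref{lem:fwdC}). I would then introduce the shorthand
\begin{equation*}
L^{k}_{i,j} = \sum_{x=1}^{j} \rvsn{i}{x}{k-1} - \sum_{x=1}^{j-1} \rvsn{i}{x}{k},
\end{equation*}
which specializes to $\rvslen{i}{k}$ when $j = i-k+1$, and derive the reverse analogues of Lemmas~\ref{lem:di3sg}--\ref{lem:f8s3k} giving its monotonicity in $i$ and $j$ and its vanishing behaviour.

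Next I would define reverse analogues $\bar{rc}_{u,v}$ of the intermediate rigged configurations in Definition~\ref{def:rc}, describing the expected state of
\begin{equation*}
  \bigl(\ftil_{u}^{\,\rvsfx{u}{v}} \ftil_{u+1}^{\,\rvsfx{u+1}{v}} \cdots \ftil_{n}^{\,\rvsfx{n}{v}}\bigr)
  \bftil^{\,\rvsfx{*}{v-1}} \cdots \bftil^{\,\rvsfx{*}{1}} \RCinfv,
\end{equation*}
and then carry out a double induction on $(q,p)$, with $p$ descending from $n$ toward $q$ inside each wave. The base case $(p,q)=(n,1)$ is immediate, just as $(p,q)=(2,1)$ was in the forward argument. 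The induction step decomposes into three pieces paralleling Lemmas~\ref{lem:d9e2p}, \ref{lem:c5k4s}, and~\ref{lem:f3k4s}: the $p$-th partition is produced by applying Lemma~\ref{lem:f9e2a} verbatim; the $(p+1)$-th partition's riggings telescope down to~$0$ (this is the side already processed within the current wave, playing the role the $(p-1)$-th partition played in the forward proof); and the $(p-1)$-th partition's riggings assume the form $\rvsn{p-1}{v-k}{k}$, or, at the boundary $p=q$, $-\rvsn{p-1}{v-k}{k-1}+\rvsn{p-1}{v-k}{k}$ (playing the role of the $(p+1)$-th partition in the forward proof). Specializing the end-state $\bar{rc}_{1,n}$ collapses to the formulas~\eqref{eq:len2} and~\eqref{eq:rig2}, proving the first claim of the theorem. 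The second claim is then immediate from Lemma~\ref{lem:bijR}, which states that $\rvsfxv \mapsto \bftil^{\,\rvsfxv}\RCinfv$ is a bijection from $\rvsfxvset$ onto $\Binf \cong \RCinf$.

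The main obstacle is not conceptual but bookkeeping: one must carefully track the index shifts induced by Algorithm~\ref{alg:3}'s reversed orientation and verify that every $\min\{\,,\,\}$-telescope in the reverse analogues of Lemmas~\ref{lem:c5k4s} and~\ref{lem:f3k4s} collapses correctly using the reverse inequalities above. Since Algorithms~\ref{alg:1} and~\ref{alg:3} are manifestly Dynkin-symmetric to one another and the two Kashiwara operator sequences are exact mirror images, the argument should go through with essentially the same level of care as the forward case, without requiring any new ideas.
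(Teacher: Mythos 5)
Your proposal is correct and follows essentially the same route as the paper, which proves Theorem~\ref{thm:4} precisely by rerunning the Theorem~\ref{thm:fwdgenA} induction under the Dynkin involution $i\mapsto n-i+1$, with Lemma~\ref{lem:bijR} replacing Lemma~\ref{lem:bij}; you correctly identify that the roles of the $(p-1)$-th and $(p+1)$-th neighboring partitions swap because each wave now descends from $\ftil_n$ to $\ftil_j$. The only quibble is cosmetic: in your own indexing the terminal state is $\bar{rc}_{n,n}$ (the last wave consists of $\ftil_n^{\,\rvsfx{n}{n}}$ alone), not $\bar{rc}_{1,n}$.
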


The following claim allows one to set any rigging, that is missing due to the corresponding row being non-existent, to zero.

\begin{lemma}
Let $\rvsfxv = (\rvsfx{i}{j})_{i,j}$ be a collection of non-negative integers that is weakly decreasing with respect to the index~$i$, and let $\rvsnv = (\rvsn{i}{j}{k})_{i,j,k}$ be its extension specified by Algorithm~\ref{alg:3}.
If a $\rvslen{i}{k}$ value, computed according to~\eqref{eq:len2}, is zero, then the corresponding $\rvsrig{i}{k}$ value, computed according to~\eqref{eq:rig2}, is also zero, for every meaningful choice of the indices.
\end{lemma}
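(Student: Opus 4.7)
Proof plan: The plan is to mirror Lemma~\ref{lem:l2r1}, which itself reduced in a single line to Lemma~\ref{lem:f8s3k}. I first introduce the local shorthand
\[
L^{k}_{i,j} = \sum_{x=1}^{j} \rvsn{i}{x}{k-1} - \sum_{x=1}^{j-1} \rvsn{i}{x}{k},
\]
playing the role of $\alen{i}{j}{k}$, so that $\rvslen{i}{k} = L^{k}_{i,i-k+1}$ and $\rvsrig{i}{k} = -\rvsn{i}{i-k+1}{k-1} + \rvsn{i}{i-k+1}{k}$. It therefore suffices to show that $L^{k}_{i,j}=0$ forces both $\rvsn{i}{j}{k-1} = 0$ and $\rvsn{i}{j}{k} = 0$, whereupon the specialization $j = i-k+1$ delivers $\rvsrig{i}{k}=0$.

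The second step is to transfer Lemmas~\ref{lem:fwdD1}--\ref{lem:fwdC} to the reverse setting. Because Algorithm~\ref{alg:3} is the mirror of Algorithm~\ref{alg:1} under the Dynkin symmetry $i \mapsto n-i+1$, the proofs carry over almost verbatim: the bound $\rvsn{i}{j}{k} \leq \rvsn{i+1}{j+1}{k-1}$ is immediate from the defining minimum; the monotonicity $\rvsn{i}{j}{k} \leq \rvsn{i-1}{j}{k}$ follows by induction on~$k$ exactly as in Lemma~\ref{lem:fwdD2}, with base case supplied by the weakly decreasing hypothesis on $\rvsfxv$; combining these two yields $\rvsn{i}{j}{k} \leq \rvsn{i}{j+1}{k-1}$; and $\rvsn{i}{j}{k} = 0$ whenever $i+k \geq n+1$ follows by the induction of Lemma~\ref{lem:fwdC} anchored at the boundary $\rvsn{n}{j}{k} = 0$.

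Equipped with these, I repeat the argument of Lemma~\ref{lem:f8s3k}. Theorem~\ref{thm:4} identifies the $L^{k}_{i,*}$ as the row lengths within the $i$-th rigged partition of $rc_{\rvsfxv}$, so they form a weakly decreasing non-negative sequence; vanishing of $L^{k}_{i,j}$ then forces every $L^{k+s}_{i,j-s}$ with $1 \leq s \leq j-1$ to vanish as well, and together with the non-negativity of each $\rvsn{*}{*}{*}$ this yields $\sum_{y=1}^{j} \rvsn{i}{y}{k-1} = 0$. The reversed monotonicity $\rvsn{i}{y}{k-1} \leq \rvsn{i-1}{y}{k-1}$ then extends this to $\rvsn{x}{y}{k-1} = 0$ for all $x \geq i$ and $1 \leq y \leq j$, in particular giving the first required conclusion. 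The same reasoning applied to the lower row gives $\rvsn{x}{y}{k} = 0$ for $x \geq i$ and $y \leq j-1$, and the defining $\min$ of Algorithm~\ref{alg:3}, combined with these vanishings and the boundary $\rvsn{n}{j}{k}=0$, propagates $\rvsn{x}{j}{k} = 0$ down from $x = n$ to $x = i$, completing the argument.

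The only substantive obstacle I anticipate is consistent index bookkeeping: the monotonicity inequality points in the opposite direction to its forward counterpart ($\rvsn{i}{j}{k} \leq \rvsn{i-1}{j}{k}$ in place of $\fwdn{i}{j}{k} \leq \fwdn{i+1}{j}{k}$), the boundary sits at $i=n$ rather than $i=1$, and the vanishing range becomes $i+k \geq n+1$ rather than $i \leq k$. Once these substitutions are applied uniformly, every step of the forward argument goes through without essential change.
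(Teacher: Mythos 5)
Your proposal is correct and matches the paper's intent exactly: the paper gives no explicit proof for this lemma (the reverse-tableau section states only "the main facts"), but the intended argument is precisely the mirror of Lemma~\ref{lem:l2r1} reducing to a reverse analogue of Lemma~\ref{lem:f8s3k}, with the transferred inequalities of Lemmas~\ref{lem:fwdD1}--\ref{lem:fwdC} supplying the supporting facts, which is exactly what you carry out. Your index bookkeeping (monotonicity reversed to $\rvsn{i}{j}{k}\leq\rvsn{i-1}{j}{k}$, boundary at $i=n$, vanishing range $i+k\geq n+1$) is consistent throughout.
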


\begin{algorithm}
 \For{$k=0,1,\dots,n-1$}{
   $j \leftarrow 1$\;
   $i \leftarrow j+k$\;
   $\rvsn{i}{j}{k} \leftarrow \rvslen{i}{k+1}$\;
   \For{$j=2,3,\dots,n-k$}{
     $i \leftarrow j+k$\;
     $\rvsn{i}{j}{k} \leftarrow \rvsrig{i-1}{k+1} +\rvsn{i-1}{j-1}{k}
                      -\min\big\{0, \rvslen{i-1}{k+1}-\rvslen{i}{k+1}\big\}$\;
   }
 }
 \For{$t = 1, 2, ..., n-1$}{
   \For{$k = 0, 1, ..., n-t-1$}{
     $j \leftarrow 1$\;
     $i \leftarrow j+k+t$\;
     $\rvsn{i}{j}{k} \leftarrow \rvslen{i}{k+1}
                      -\sum_{x=j+1}^{j+t} \rvsn{i}{x}{k}
                      +\sum_{x=j}^{j+t-1} \rvsn{i}{x}{k+1}$\;
     \For{$j = 2, 3, ..., n-k-t$}{
       $i \leftarrow j+k+t$\;
       $\rvsn{i}{j}{k} \leftarrow \rvsn{i-1}{j-1}{k+1}
               -\min\left\{ 0,
               \begin{aligned}
                 &\rvslen{i-1}{k+1} -\rvslen{i}{k+1}\\
                 &-\textstyle\sum_{x=j  }^{j+t-1} \rvsn{i-1}{x}{k} 
                  +\textstyle\sum_{x=j-1}^{j+t-2} \rvsn{i-1}{x}{k+1}\\
                 &+\textstyle\sum_{x=j+1}^{j+t  } \rvsn{i}{x}{k}
                  -\textstyle\sum_{x=j  }^{j+t-1} \rvsn{i}{x}{k+1}
               \end{aligned}
               \right\}$\;
     }
   }
 }
 \For{$j=1,2,\dots,n$}{
   \For{$i=j,j+1,\dots,n$}{
     $\rvsfx{i}{j} \leftarrow \rvsn{i}{j}{0}$\;
   }
 }
\caption{Recovery of $\rvsfxv = (\rvsfx{i}{j})_{i,j}$ from $\{\rvslen{i}{k},\rvsrig{i}{k} \}_{i,k}$}\label{alg:4}
\end{algorithm}

The weakly decreasing $\rvsfxv$ can now be recovered from the $\{\rvslen{i}{k}, \rvsrig{i}{k}\}_{i,k}$ values through Algorithm~\ref{alg:4}.
Just as was explained in Section~\ref{sec:4}, we can use the map $\{\rvslen{i}{k}, \rvsrig{i}{k}\}_{i,k} \mapsto \rvsfxv$, given by Algorithm~\ref{alg:4}, to distinguish elements of~$\RCinf$ from other elements of the larger rigged configuration crystal.

The following can be obtained by combining Proposition~\ref{prop:2} and Theorem~\ref{thm:4}.

\begin{theorem}
\label{thm:rvs}
The map that sends the reverse tableau $\rvstblm$ to the rigged configuration $rc_{\rvsfxv}$, for each $\rvsfxv\in\rvsfxvset$, is an isomorphism between $\Rinf$ and~$\RCinf$.
\end{theorem}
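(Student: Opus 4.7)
The plan is to prove Theorem~\ref{thm:rvs} by a direct analogue of the argument for Theorem~\ref{thm:fwd}, exploiting the fact that every ingredient has already been assembled. The key observation is that both sides of the claimed bijection $\rvstblm \mapsto rc_{\rvsfxv}$ are parametrized by the same set $\rvsfxvset$, and on each side the parametrization arises by applying the same operator product $\bftil^{\,\rvsfxv}$ to the respective highest weight vector. Thus the proof will reduce to verifying that the factorization through $\Binf$ yields a crystal isomorphism.

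First I would invoke Proposition~\ref{prop:2}, which identifies $\bftil^{\,\rvsfxv} \Rinfv$ with $\rvstblm$, together with Lemma~\ref{lem:bijR}, to conclude that the map $\rvsfxv \mapsto \rvstblm$ is a bijection $\rvsfxvset \to \Rinf$. Under any fixed crystal isomorphism $\Rinf \cong \Binf$ (which exists by \cite{Lee14} and must send $\Rinfv$ to $\Binfv$ since both are the unique highest weight element of their respective crystals), this corresponds to $\rvsfxv \mapsto \bftil^{\,\rvsfxv} \Binfv$. Next I would invoke Theorem~\ref{thm:4}, which identifies $\bftil^{\,\rvsfxv} \RCinfv$ with $rc_{\rvsfxv}$, together with Lemma~\ref{lem:2}, to obtain a bijection $\rvsfxvset \to \RCinf$ that, under a fixed isomorphism $\RCinf \cong \Binf$ carrying $\RCinfv$ to $\Binfv$, again corresponds to $\rvsfxv \mapsto \bftil^{\,\rvsfxv} \Binfv$.

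Since both labelings factor through the common element $\bftil^{\,\rvsfxv} \Binfv \in \Binf$, the composite map $\rvstblm \mapsto rc_{\rvsfxv}$ is precisely the composition of the crystal isomorphism $\Rinf \to \Binf$ with $\Binf \to \RCinf$, hence is itself a crystal isomorphism. The bijectivity on sets is immediate since the two families $\{\rvstblm\}$ and $\{rc_{\rvsfxv}\}$ each exhaust their respective crystals without repetition, as already noted after Proposition~\ref{prop:2} and in Theorem~\ref{thm:4}.

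There is no significant obstacle at this stage: all the nontrivial content has been pushed into the earlier results. The main subtlety worth spelling out is the compatibility of the two identifications with the Kashiwara operators, but this is automatic from the fact that the lowering operator sequence $\bftil^{\,\rvsfxv}$ used to define $\rvstblm$ in $\Rinf$ is literally the same sequence used to produce $rc_{\rvsfxv}$ in $\RCinf$, and any crystal morphism commutes with the Kashiwara operators by definition.
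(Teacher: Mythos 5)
Your proposal is correct and follows essentially the same route as the paper, which derives Theorem~\ref{thm:rvs} by combining Proposition~\ref{prop:2} and Theorem~\ref{thm:4}, i.e., by observing that both $\Rinf$ and $\RCinf$ are labeled by $\rvsfxvset$ through the same operator products $\bftil^{\,\rvsfxv}$ applied to the respective highest weight elements. You merely spell out the factorization through $\Binf$ more explicitly than the paper does.
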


As a byproduct of Theorem~\ref{thm:fwd} and Theorem~\ref{thm:rvs}, we obtain a crystal isomorphism between $\Tinf$ and~$\Rinf$, given by $\fwdtblm \mapsto rc_{\fwdfxv} = rc_{\rvsfxv} \mapsto \rvstblm$.
However, the authors are aware that both the Schensted bumping algorithm~\cite{Knu70,Sch61} and the Sch\"utzenberger sliding algorithm~\cite{Sch77} can be utilized to create isomorphisms between~$\Tinf$ and~$\Rinf$ that are more direct.
We wish to expand on this subject in a future work.

The reverse tableau analogue of Theorem~\ref{thm:RCla} will be provided next.
Given $\la\in P^+$, we define $\rvsfxvset^{\la}$ to be the set of $\rvsfxv = (\rvsfx{i}{j})_{i,j} \in\rvsfxvset$ satisfying the condition
\begin{equation}
\label{eq:d8d3s}
\sum_{x=1}^{j} (\rvsfx{i}{x} - \rvsfx{i+1}{x})
\leq \lambda(h_i) + \sum_{x=1}^{j-1} (\rvsfx{i-1}{x} - \rvsfx{i}{x}),
\end{equation}
for each pair of indices $1\leq j\leq i\leq n$, where any $\rvsfx{n+1}{x}$ are interpreted as zeros.

\begin{lemma}
For each $\la \in P^+$, the image of the strict crystal embedding
$\Bla \hookrightarrow \Rinf \otimes \Tvla$
that maps $b_\lambda$ to $\Rinfv \otimes \tvla$ is
$\left\{ \rvstblm \otimes \tvla \,\left|\, \rvsfxv \in \rvsfxvset^{\la} \right.\right\}$.
\end{lemma}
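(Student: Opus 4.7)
The plan is to follow the pattern used for the tableau analogue stated earlier in the paper, reducing this claim to the corresponding reverse-tableau image description from~\cite{Lee14} via the bijection $\rvsfxv \leftrightarrow \rvstblm$ supplied by Proposition~\ref{prop:2} and Lemma~\ref{lem:bijR}. Concretely, the proof proceeds in three steps. First, invoke the result of~\cite{Lee14} that characterizes the image of the embedding $\Bla \hookrightarrow \Rinf \otimes \Tvla$ as those $\rvstbl \otimes \tvla$ whose marginally large reverse tableau $\rvstbl$ satisfies the appropriate inequalities involving $\varepsilon_i(\rvstbl)$ and $\la(h_i)$ for every simple root index~$i$. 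Second, use the bijection $\rvsfxv \mapsto \rvstblm$ between $\rvsfxvset$ and $\Rinf$ to re-express this condition as one on the parameter~$\rvsfxv$. Third, compute the relevant $\varepsilon_i$ values directly from the explicit structural description of~$\rvstblm$ and match the resulting system of inequalities with~(\ref{eq:d8d3s}).

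The core work is the third step. The signed-bracket rule for reverse tableaux reduces $\varepsilon_i(\rvstblm)$ to a maximum, over a truncation index $j$, of a signed count of $(i+1)$-entries against $i$-entries read in a specified order. Using the block description of $\rvstblm$ given in the paragraph preceding Proposition~\ref{prop:2}, the relevant $(i+1)$-entries come from the $i$-th leftover block (and the bottom rows of higher blocks) with row-counts $\sum_{x=1}^{i-y+1}(\rvsfx{i}{x}-\rvsfx{i+1}{x})$, while the cancelling $i$-entries reside in the $(i-1)$-th block and are supplied by $\sum_{x=1}^{i-1-y+1}(\rvsfx{i-1}{x}-\rvsfx{i}{x})$. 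After verifying that the basic $i$-columns inserted for marginal largeness contribute only matched $i/(i+1)$ pairs, the partial bracket sum at truncation $j$ simplifies to $\sum_{x=1}^{j}(\rvsfx{i}{x}-\rvsfx{i+1}{x}) - \sum_{x=1}^{j-1}(\rvsfx{i-1}{x}-\rvsfx{i}{x})$. The weakly decreasing property of $\rvsfxv$ in $i$ ensures these partial sums are non-negative, so the maximum is attained at some $1 \leq j \leq i$, and imposing the upper bound $\la(h_i)$ at each such $j$ produces exactly the system~(\ref{eq:d8d3s}).

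The main obstacle is the bookkeeping in this third step. One must pin down the exact reading order that~\cite{Lee14} uses to compute $\varepsilon_i$ on a marginally large reverse tableau, confirm that the basic $i$-columns inserted to maintain marginal largeness leave no residual unmatched entries in the signed cancellation, and handle the boundary case $j=i$ cleanly via the convention $\rvsfx{n+1}{x}=0$ stated in the lemma. Once these verifications are in place, the characterization from~\cite{Lee14} transfers via $\rvsfxv \mapsto \rvstblm$ to the system~(\ref{eq:d8d3s}), which gives exactly the image description claimed.
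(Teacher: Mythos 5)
The paper itself gives no proof of this lemma: like its $\Tinf$-counterpart, it is presented as a restatement of a result of~\cite{Lee14} in the $\rvsfxv$-parametrization, so the entire content of the paper's ``proof'' is the observation that Lee's explicit description of the image coincides with the set $\left\{ \rvstblm \,\middle|\, \rvsfxv \in \rvsfxvset^{\la} \right\}$ under the bijection $\rvsfxv \mapsto \rvstblm$ of Proposition~\ref{prop:2}. Your plan to actually rederive the description therefore goes beyond what the paper does, and its skeleton (quote an image characterization, transfer it through $\rvsfxv \mapsto \rvstblm$, match inequalities) is reasonable in outline.

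The genuine gap lies in your first and third steps. The criterion for $b \otimes \tvla$ to lie in the image of $\Bla \hookrightarrow \Binf \otimes \Tvla$ is not $\varepsilon_i(b) \leq \la(h_i)$; the standard criterion (Kashiwara) is $\varepsilon_i^*(b) \leq \la(h_i)$, where $*$ is the Kashiwara involution, and $\varepsilon_i$ and $\varepsilon_i^*$ differ in general. Consequently the signed-bracket computation of $\varepsilon_i(\rvstblm)$, even if executed flawlessly, need not reproduce the correct image set: you would have to either prove that for marginally large reverse tableaux the statistic $\varepsilon_i^*$ is computed by precisely the reading you describe (itself a nontrivial assertion about the $*$-involution on $\Rinf$), or follow~\cite{Lee14} in performing a direct reachability analysis of which elements are obtained from $\Rinfv \otimes \tvla$ without hitting zero. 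In addition, the decisive verifications --- pinning down the reading order, checking that the inserted basic columns contribute only matched pairs, and the boundary case $j=i$ --- are listed as obstacles rather than carried out, so the central identity equating the partial bracket sums with the left-hand side of~(\ref{eq:d8d3s}) remains unestablished. As written, the proposal is a plausible plan resting on an unverified (and, in its stated form, likely incorrect) formulation of the input result from~\cite{Lee14}.
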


Combining this with Theorem~\ref{thm:rvs}, we obtain another description of~$\Bla$ given in terms of rigged configurations.

\begin{theorem}
For each $\la \in P^+$, the image of the strict crystal embedding
$\Bla \hookrightarrow \RCinf \otimes \Tvla$
that maps $b_\lambda$ to $\RCinfv \otimes \tvla$ is
$\left\{ rc_{\rvsfxv} \otimes \tvla \,\left|\, \rvsfxv \in \rvsfxvset^{\la} \right.\right\}$.
\end{theorem}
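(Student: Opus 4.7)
The plan is to simply combine the immediately preceding lemma, which identifies the image in $\Rinf\otimes\Tvla$, with the crystal isomorphism $\Rinf\cong\RCinf$ supplied by Theorem~\ref{thm:rvs}. Write $\Phi:\Rinf\to\RCinf$ for this isomorphism, so that $\Phi(\rvstblm)=rc_{\rvsfxv}$ for every $\rvsfxv\in\rvsfxvset$. I first observe that $\Phi(\Rinfv)=\RCinfv$: this is because the highest weight element $\Rinfv$ corresponds to $\rvsfxv=\mathbf{0}$, for which Algorithm~\ref{alg:3} produces $\rvsnv=\mathbf{0}$, and then every $\rvslen{i}{k}$ vanishes, so that $rc_{\mathbf{0}}=\RCinfv$.

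Next I would tensor $\Phi$ with the identity on the one-element crystal $\Tvla$ to obtain a crystal isomorphism
\begin{equation*}
\Phi\otimes\operatorname{id}:\Rinf\otimes\Tvla\longrightarrow\RCinf\otimes\Tvla,
\end{equation*}
which sends $\Rinfv\otimes\tvla$ to $\RCinfv\otimes\tvla$. Composing with the strict crystal embedding $\Bla\hookrightarrow\Rinf\otimes\Tvla$ provided by the preceding lemma yields a strict crystal embedding $\Bla\hookrightarrow\RCinf\otimes\Tvla$ sending $b_\la$ to $\RCinfv\otimes\tvla$. By the uniqueness clause in Lemma~\ref{lem:cryimb}, this composite must coincide with the embedding appearing in the statement.

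The image of the composite embedding is obtained by applying $\Phi\otimes\operatorname{id}$ elementwise to the image described in the preceding lemma. Since $(\Phi\otimes\operatorname{id})(\rvstblm\otimes\tvla)=rc_{\rvsfxv}\otimes\tvla$, the image is exactly $\{rc_{\rvsfxv}\otimes\tvla\mid\rvsfxv\in\rvsfxvset^{\la}\}$, which is the desired description. There is no substantive obstacle: all the work has been absorbed into Theorem~\ref{thm:rvs} and the preceding lemma, and this theorem is their formal consolidation. The only point worth double-checking is that $\Phi$ does map the distinguished highest weight element to the distinguished highest weight element, which is precisely the computation with $\rvsfxv=\mathbf{0}$ carried out above.
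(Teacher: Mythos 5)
Your proposal is correct and follows exactly the paper's route: the paper obtains this theorem by combining the immediately preceding lemma with the isomorphism $\rvstblm \mapsto rc_{\rvsfxv}$ of Theorem~\ref{thm:rvs}, just as you do. The extra details you supply (tensoring with $\operatorname{id}_{\Tvla}$, invoking the uniqueness clause of Lemma~\ref{lem:cryimb}, and checking $rc_{\mathbf{0}}=\RCinfv$) are all consistent with what the paper leaves implicit.
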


The image sets described by Theorem~\ref{thm:RCla} and the above claim must be the same set.
In other words, we have described the same set in two different ways.

\section*{Acknowledgments}

The authors are grateful to Travis Scrimshaw for valuable discussions and to Satoshi Naito for providing important information related to this work.
Some of this work was done while HL was visiting Department of Mathematics at UC Davis and most of the progress was made while the two authors were visiting Korea Institute for Advanced Study.
The authors thank them for their hospitality and support.
The Sage Mathematical Software~\cite{Sag08,Ste15} was used at an early stage of this work to experiment with examples.



\appendix

\vspace{3em}

\section*{Addendum}

The second version (\href{https://arxiv.org/abs/1604.04357v2}{v2}, May 2016) of this e-print (arXiv:1604.04357) had an Appendix section that was not present in its previous version (\href{https://arxiv.org/abs/1604.04357v1}{v1}, Apr 2016).
The Appendix has been removed because many errors and logical gaps were found in it.
Note that the name Roger Tian was added to and removed from the author list of this e-print together with the said Appendix.

In Nov 2016, R.Tian posted an updated version\footnote{The state of arXiv:1611.07869v2 is still far from suitable for publication.} of the Appendix as arXiv:1611.07869.
This was done without any consultation with H.Lee or J.Hong, even though H.Lee and J.Hong had been contacting the non-responsive R.Tian to discuss the future of the troubled contents.
Furthermore, the posted report illegitimately lists just R.Tian as its author.
The Appendix reported on a separate incomplete project that H.Lee had allowed R.Tian to join in after R.Tian implored H.Lee for a problem to work on.%
\footnote{The Appendix was essentially the second half of ``Ruoguang (Roger) Tian, \textit{Top to Random Shuffles and Characterization of Rigged Configurations of~$\mathfrak{B}(\infty)$ in Type~$A$}, UC~Davis Ph.D. Dissertation, June 2016,'' as it was less than a week prior to its final due date.}
J.Hong also contributed significantly to arXiv:1611.07869 throughout the May--Jul 2016 period while trying to fix the troubled contents.
We plan to provide more information concerning this matter in the future through another channel.\footnote{Probably \url{http://www.math.snu.ac.kr/~jinhong/arXiv.1611.07869/}.}

\end{document}